\title[Random regular graphs and the systole of a random surface]{Random regular graphs\\and the systole of a random surface}
\author{Bram Petri}
\address{Max Planck Institute for Mathematics, Bonn, Germany}
\email{brampetri@mpim-bonn.mpg.de}
\date{\today}
\thanks{Research supported by Swiss National Science Foundation grant number PP00P2\textunderscore 128557}
\keywords{Random Riemann surfaces, hyperbolic surfaces, Bely\v{\i} surfaces, Riemannian surfaces, random graphs, systole}
\subjclass[2010]{Primary: 57M50. Secondary: 53C22, 05C80}
\newtheorem{thm}{Theorem}[section]
\newtheorem{prp}[thm]{Proposition}
\newtheorem{lem}[thm]{Lemma}
\newtheorem*{thmA}{Theorem A}
\newtheorem*{thmB}{Theorem B}
\newtheorem*{thmC}{Theorem C}
\newtheorem*{thmD}{Theorem D}
\newtheorem*{thmA1}{Theorem A.1}
\newtheorem*{thmA2}{Theorem A.2}
\theoremstyle{definition}
\newtheorem{dff}{Definition}[section]
\newtheorem*{obs}{Observation}
\newcommand{\Pro}[2]{\mathbb{P}_{#1}\left[#2\right]}
\newcommand{\ExV}[2]{\mathbb{E}_{#1}\left[#2\right]}
\newcommand{\verz}[2]{\left\{ #1 ; #2\right\}}
\newcommand{\rij}[3]{\left\{#1\right\}_{#2}^{#3}}
\newcommand{\abs}[1]{\left| #1 \right|}
\newcommand{\aant}[1]{\left| #1 \right|}
\newcommand{\tr}[1]{\mathrm{tr}\left(#1\right)}
\newcommand{\floor}[1]{\left\lfloor #1 \right\rfloor}
\newcommand{\ceil}[1]{\left\lceil #1 \right\rceil }
\newcommand{\sys}{\mathrm{sys}}
\newcommand{\inp}[2]{\left< #1, #2\right>}
\begin{document}

%%%%%%%%%%%%%%%%%%%%%%%%%%%%%%%%%%%%%%%%%%%%%%%%%
%		A B S T R A C T
%%%%%%%%%%%%%%%%%%%%%%%%%%%%%%%%%%%%%%%%%%%%%%%%%
\begin{abstract}
We study the systole of a random surface, where by a random surface we mean a surface constructed by randomly gluing together an even number of triangles. We study two types of metrics on these surfaces, the first one coming from using ideal hyperbolic triangles and the second one using triangles that carry a given Riemannian metric. 

In the hyperbolic case we compute the limit of the expected value of the systole when the number of triangles tends to infinity (approximately $2.484$). We also determine the asymptotic probability distribution of the number of curves of any finite length. This turns out to be a Poisson distribution.

In the Riemannian case we give an upper bound to the limit supremum and a lower bound to the limit infimum of the expected value of the systole depending only on the metric on the triangle. We also show that this upper bound is sharp in the sense that there is a sequence of metrics for which the limit infimum comes arbitrarily close to the upper bound.

The main tool we use is random regular graphs. One of the difficulties in the proof of the limits is controlling the probability that short closed curves are separating. To do this, we first prove that the probability that a random cubic graph has a short separating circuit tends to $0$ as the number of vertices tends to infinity and show that this holds for circuits of a length up to $\log_2$ of the number of vertices.
\end{abstract}

%%%%%%%%%%%%%%%%%%%%%%%%%%%%%%%%%%%%%%%%%%%%%%%%%
%		T I T L E
%%%%%%%%%%%%%%%%%%%%%%%%%%%%%%%%%%%%%%%%%%%%%%%%%
\maketitle

%%%%%%%%%%%%%%%%%%%%%%%%%%%%%%%%%%%%%%%%%%%%%%%%%
%		I N T R O D U C T I O N
%%%%%%%%%%%%%%%%%%%%%%%%%%%%%%%%%%%%%%%%%%%%%%%%%
\section{Introduction}

Given a surface (i.e. a $2$-dimensional manifold), one of the basic questions one can ask is, what kind of metrics can be defined on this surface? If we restrict the question to Riemannian metrics of constant curvature $-1$, this question leads to the study of moduli space and Teichm\"uller space. These spaces carry various metric structures themselves, one of those being the Weil-Petersson metric. In this metric, moduli space has finite volume. That makes it possible to speak of the probability that a Riemann surface has certain properties. In \cite{Mir}, Mirzakhani proved various asymptotic results about these probabilities. For example, she proved that the probability that a Riemann surface has a non-contractible closed curve of length less than $\varepsilon$ decays like $\varepsilon^2$ for $\varepsilon\rightarrow 0$, independantly of the topological type of the surface.

Another approach to random surfaces that one can take is a more combinatorial one: we start with $2N$ triangles that carry a certain metric and glue them together along their sides in a random fashion (the precise model will be explained in Section \ref{sec_model}). This gives a very different probability space. For one thing, this space is finite and also the topological type of the resulting surface is no longer fixed. 

So, the first question to consider is the resulting topology. Because we are gluing together an even number of triangles, a random surface has no boundary components. Because these surfaces are asymptotically almost surely connected, the genus completely determines its topology (sometimes we will choose to turn the corners of the triangles into cusps, but then through the Euler charateristic the genus tells us the number of cusps as well). It turns out that the expected value of the genus in this model grows like $N/2$. Various versions of this result were proved by Gamburd and Makover in \cite{GM}, Brooks and Makover in \cite{BM}, Pippenger and Schleich in \cite{PS}, Gamburd in \cite{Gam} and Dunfield and Thurston in \cite{DT}. In \cite{DT}, similar questions are also considered for random constructions of 3-manifolds.

In \cite{BM}, Brooks and Makover identify the triangles in this combinatorial model with ideal hyperbolic triangles that are glued together in a natural way. This means that the resulting surface will have cusps. It can however be compactified by adding points in the cusps. A theorem of Bely\v{\i} \cite{Bel} then implies that these compactified surfaces form a dense set in any moduli space of compact surfaces.

One of the nice properties of a surface constructed by gluing ideal triangles is that the lengths of curves on the can easily be computed from the combinatorial data of the gluing. While the metric on the compact version of the random surface is different from the one on the non-compact surface, a theorem of Brooks \cite{Bro} in combination with results from Brooks and Makover \cite{BM}, implies that these metrics are comparable outside disks around cusps and their images with probability tending to $1$ for $N\rightarrow\infty$. Using this, Brooks and Makover for example study the asymptotics of the probability distribution of the Cheeger constant, the shortest non-contractible geodesic (the \emph{systole}) and the diameter of the compact random surface. 

Instead of using ideal hyperbolic triangles one can also use equilateral Euclidean triangles. In \cite{GPY}, Guth, Parlier and Young prove that the probability that a surface constructed using equilateral Euclidean triangles can be decomposed into pairs of pants (three-holed spheres) with total boundary length less than $N^{7/6-\varepsilon}$ tends to $0$ for $N\rightarrow \infty$ for any $\varepsilon >0$. Because of the behaviour of the genus in the combinatorial model it is natural to compare this result with the behaviour of the \emph{total pants length} in the Weil-Petersson metric for $g\rightarrow\infty$. It turns out that this is very similar, they prove that the probability that the total pants length of a hyperbolic surface is less than $g^{7/6-\varepsilon}$ tends to $0$ for $g\rightarrow\infty$ for any $\varepsilon>0$.

In this article we will study the systole of a random surface. We will take the combinatorial approach and we will consider both the metric coming from identifying the triangles with ideal hyperbolic triangles and the metric coming from a Riemannian metric with suitable boundary conditions on the triangles. The main goal of this text is the study of the asymptotics of the expected value of the length of the systole in these two settings. 

For the hyperbolic setting we are able to compute the exact limit of this expected value, both in the non-compact and compact case. $\Omega_N$ will denote the probability space of random surfaces constructed with $2N$ triangles and $\sys_N:\Omega_N\rightarrow\mathbb{R}$ the random variable that measures the length of the systole of a random surface. To state the result we need to define the following set for every $k\in\mathbb{N}$:
\begin{equation*}
A_k = \left.\verz{w\text{ word in }\left(\begin{array}{cc} 1 & 1 \\ 0 & 1 \end{array}\right)\text{ and }\left(\begin{array}{cc} 1 & 0 \\ 1 & 1 \end{array}\right)}{\tr{w}=k}\middle/ \sim\right.
\end{equation*}
where $w\sim w'$ if either $w$ is a cyclic permutation of $w'$ as a word or $w=(w')^t$ as a matrix. We will prove the following theorem:
\begin{thmA}
Both in the compact and non-compact hyperbolic setting we have:
$$
\lim_{N\rightarrow\infty}\ExV{}{\sys_N} = \sum\limits_{k=3}^\infty 2\left(\prod\limits_{[w]\in\bigcup\limits_{i=3}^{k-1}A_i} \exp\left(-\frac{\aant{[w]}}{2\abs{w}}\right)\right) \left(1-\prod\limits_{[w]\in A_k}\exp\left(-\frac{\aant{[w]}}{2\abs{w}}\right)\right)\cosh^{-1}\left(\frac{k}{2}\right)
$$
\end{thmA}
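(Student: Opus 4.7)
The plan is to reduce the computation of $\ExV{}{\sys_N}$ to a Poisson limit theorem for the number of short closed geodesics, indexed by the word classes $A_k$. First I would make precise the combinatorial model: a random surface built from $2N$ ideal hyperbolic triangles is encoded by its dual $3$-regular multigraph $G$ on $2N$ vertices, drawn from the configuration model. In the Fuchsian picture, the relevant group sits inside $\mathrm{PSL}_2(\mathbb{Z})$, and every closed non-contractible geodesic is represented by a product of parabolics conjugate to $\left(\begin{smallmatrix}1 & 1 \\ 0 & 1\end{smallmatrix}\right)$ and $\left(\begin{smallmatrix}1 & 0 \\ 1 & 1\end{smallmatrix}\right)$, the generators attached to adjacent cusps of the ideal-triangle tessellation of $\mathbb{H}^2$. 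A non-backtracking closed walk of length $L$ in $G$ spells out such a word $w$ (one letter per triangle, depending on whether the walk turns ``left'' or ``right'' inside it), and the associated hyperbolic geodesic has length $\ell(w) = 2\cosh^{-1}(\tr{w}/2)$. The equivalence $\sim$ defining $A_k$ encodes exactly the ambiguity of basepoint (cyclic permutation) and of orientation (transpose, since the two generators are each other's transposes).

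The second step is a Poisson convergence theorem. For each fixed $K$ and each $[w]\in \bigcup_{i=3}^K A_i$, let $X^N_{[w]}$ count the non-separating closed geodesics on the random surface whose word class is $[w]$. Using the method of factorial moments in the configuration model, I would show that the family $(X^N_{[w]})_{[w]}$ converges jointly to independent Poisson random variables with parameters $\lambda_{[w]} = \aant{[w]}/(2\abs{w})$. This is the natural labelled refinement of the classical Poisson limit for short cycles in a random cubic graph (whose total cycle count at length $L$ has mean $\sim 2^L/(2L)$): each unoriented, unbased closed walk of length $\abs{w}$ can read off any of the $\aant{[w]}$ distinct strings in its cyclic-and-transpose class. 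From this joint limit one reads off, for any $k\geq 3$,
\begin{equation*}
\lim_{N\to\infty}\Pro{}{\sys_N = 2\cosh^{-1}(k/2)} = \left(\prod_{[w]\in\bigcup_{i=3}^{k-1}A_i} e^{-\lambda_{[w]}}\right)\left(1 - \prod_{[w]\in A_k} e^{-\lambda_{[w]}}\right),
\end{equation*}
since $\sys_N = 2\cosh^{-1}(k/2)$ exactly when no non-separating geodesic is shorter and at least one of that length exists. Multiplying by $2\cosh^{-1}(k/2)$ and summing over $k$ produces the stated formula.

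There are two main obstacles. The first is separating curves: a priori a very short geodesic could be separating and would then not correspond to a cycle in $G$ of the expected length, corrupting the ``no shorter geodesic'' event. The fix is the random-graph estimate emphasized in the abstract, that the probability a random cubic multigraph on $2N$ vertices has any separating circuit of length at most $\log_2(2N)$ tends to $0$; this lets us restrict attention to non-separating geodesics with probability $\to 1$. The second obstacle is uniform integrability, needed to exchange the limit with the sum over $k$. I would control the tail $\Pro{}{\sys_N > L}$ uniformly in $N$ via a first-moment bound on the number of cycles of length up to $L$ in the configuration model, using that $\sum_{[w]\in A_k}\aant{[w]}$ grows at most like $2^k$ while $2\cosh^{-1}(k/2)$ grows only logarithmically in $k$; this makes the tail series summable uniformly in $N$. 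Finally, the compact hyperbolic case follows from the non-compact one via the comparison theorems of Brooks and of Brooks--Makover already cited in the introduction.
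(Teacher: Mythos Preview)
Your overall architecture matches the paper's: encode geodesics by words, prove a joint Poisson limit for the counts $Z_{N,[w]}$ via factorial moments (this is Theorem~B), read off the pointwise limit of $\Pro{}{\sys_N = 2\cosh^{-1}(k/2)}$, and then justify interchanging limit and sum. The separating-circuit issue and the passage to the compact case are also handled in the paper exactly as you suggest.

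The genuine gap is in your uniform-integrability step. You propose to bound the tail $\Pro{}{\sys_N > L}$ by a \emph{first-moment} bound on cycle counts. But $\{\sys_N > L\}$ is (essentially) the event that there are \emph{no} short non-trivial circuits, i.e.\ $\{X_{N,j}=0\}$ for all relevant $j$; Markov's inequality only bounds $\Pro{}{X\geq 1}$, which goes the wrong way. What is actually needed is a bound on $\Pro{}{X_{N,k}=0}$ that is uniform in $N$ and decays fast enough in $k$. The paper obtains this (Proposition~\ref{prp_ubPX1}) by a genuinely two-sided argument: for large $N$ the second-moment inequality $\Pro{}{X=0}\leq 1-\ExV{}{X}^2/\ExV{}{X^2}$, which in turn requires a careful count of intersecting pairs of $k$-circuits (Lemma~\ref{lem_doublecirc}); for small $N$ a martingale concentration inequality under edge-switchings (Theorem~\ref{thm_switch}) combined with a bound on how many $k$-circuits pass through a given edge (Lemma~\ref{lem_kcirc}). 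None of this is visible from a first-moment calculation.

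Two further ingredients you omit are also essential. First, even when short circuits exist they may all be left-hand-turn circuits (words $L^j$), hence homotopic to cusps and trivial; the paper disposes of this by an orientation-counting argument giving an extra $2/2^{k}$ factor. Second, to show the exceptional set $B_N$ (small genus, short separating circuits, or $m_\ell$ too large) contributes negligibly to $\ExV{}{\sys_N}$, one needs an \emph{a priori} upper bound on $\sys_N(\omega)$; the paper uses Gromov's systolic inequality (Theorem~\ref{thm_gromov}) to get $\sys_N = O(\sqrt{N})$. Finally, for the compact case the qualitative Brooks--Makover statement is not enough: one needs the quantitative $\Pro{}{E_{L,N}}=\mathcal{O}(N^{-1})$ of Proposition~\ref{prp_cusplength} to absorb the $O(\sqrt{N})$ systole bound on the bad set.
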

\noindent To prove that the result in the compact setting is the same as in the non-compact setting we will use the fact that the probability that a random hyperbolic surface has large cusps tends to $1$. This fact was proved in \cite{BM}. However, we also need to know how fast this probability tends to $1$, so we will prove a quantitative version of their theorem, using similar ideas.

\noindent Even though the expression on the right hand side looks rather abstract, it is possible to approximate it. We can determine the sets $A_k$ for $k$ up to some prescribed trace. Together with an approximation for the remainder term this gives the following approximation for the limit (see (\ref{eq_numHyp}) in section \ref{sec_numval}):
\begin{equation*}
2.48432 \leq \lim_{N\rightarrow\infty}\ExV{}{\sys_N} \leq 2.48434
\end{equation*}

As a byproduct of the proof of Theorem A, we also obtain the asymptotic probability distribution of the number of closed curves of any finite length. Before we can make this statement precise, we need to introduce some notation. First of all, by tracing a curve on the surface and recording whether the curve turns right or left on every triangle it passes, we get a word (technically only defined up to the equivalence of words mentioned above) in the letters $L$ and $R$. We associate the matrices 
\begin{equation*}
\left(\begin{array}{cc} 1 & 1 \\ 0 & 1 \end{array}\right)\text{ and }\left(\begin{array}{cc} 1 & 0 \\ 1 & 1 \end{array}\right)
\end{equation*}
to the letters $L$ and $R$ respectively. Given an equivalence class of words $[w]$ corresponding to a homotopy class of curves $\gamma$, we say that $\gamma$ carries $[w]$.

Now for every equivalence class $[w]$ of words in $L$ and $R$ we define the random variables $Z_{N,[w]}:\Omega_N\rightarrow\mathbb{N}$ by:
\begin{equation*}
Z_{N,[w]}(\omega) = \aant{\verz{\text{homotopy classes of curves }\gamma\text{ on the surface corresponding to }\omega}{\gamma\text{ carries }[w]}}
\end{equation*}
The relation between these random variables and the length spectrum is that if a curve $\gamma$ carries a word $[w]$ in $L$ and $R$, then the unique geodesic in its homotopy class has length:
\begin{equation*}
\ell(\gamma) = 2\cosh^{-1}\left( \frac{\tr{w}}{2}\right)
\end{equation*}
where the matrix $w$ is obtained by multiplying the matrices corresponding to $L$ and $R$ as dictated by $[w]$.

The theorem is following:
\begin{thmB}
Let $W$ be a finite set of equivalence classes of words. Then we have: 
$$
Z_{N,[w]}\rightarrow Z_{[w]} \text{ in distribution as }N\rightarrow\infty
$$
for all $[w]\in W$, where:
\begin{itemize}[leftmargin=0.2in]
\item $Z_{[w]}:\mathbb{N}\rightarrow\mathbb{N}$ is a Poisson distributed random variable with mean $\lambda_{[w]}=\frac{\aant{[w]}}{2\abs{w}}$ for all $w\in W$.
\item The random variables $Z_{[w]}$ and $Z_{[w']}$ are independent for all $[w],[w']\in W$ with $[w]\neq [w']$.
\end{itemize}
\end{thmB}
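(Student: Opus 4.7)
The plan is to use the method of factorial moments. It is classical that a tuple $(X_{1,N},\ldots,X_{r,N})$ of non-negative integer-valued random variables converges jointly in distribution to independent Poisson variables with means $\lambda_1,\ldots,\lambda_r$ if and only if
$$
\lim_{N\to\infty}\mathbb{E}\!\left[\prod_{i=1}^r (X_{i,N})_{k_i}\right]=\prod_{i=1}^r\lambda_i^{k_i}
$$
for every $k_1,\ldots,k_r\in\mathbb{N}$, where $(X)_k:=X(X-1)\cdots(X-k+1)$ denotes the falling factorial. So it suffices to verify this identity for the family $(Z_{N,[w]})_{[w]\in W}$ with $\lambda_{[w]}=|[w]|/(2|w|)$.

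First I would pass to the configuration-model description of $\Omega_N$: a random surface is a uniformly random perfect matching of the $6N$ triangle-sides, equivalently a random $3$-regular multigraph on $2N$ vertices equipped with the cyclic order at each vertex inherited from the triangle. In this description, a homotopy class of curve carrying $[w]$ is the same as an equivalence class (under cyclic shift and the L/R-reversal induced by transpose) of closed walks of length $|w|$ in the graph whose turning sequence spells out $w$. For a fixed representative $w\in[w]$, let $Y_{N,w}$ count the number of labelled \emph{placements}, i.e.\ tuples $(t_1,a_1,\ldots,t_{|w|},a_{|w|})$ of triangles together with entering sides for which all the required $|w|$ gluings are realized in the random matching. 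By orbit counting, each homotopy class carrying $[w]$ contributes exactly $c:=2|w|/|[w]|$ placements to $Y_{N,w}$, so $Y_{N,w}=c\,Z_{N,[w]}$.

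Using this, $\prod_i c_i^{k_i}\,\mathbb{E}\!\left[\prod_i(Z_{N,[w_i]})_{k_i}\right]$ equals the expected number of ordered $K$-tuples ($K=\sum k_i$) of placements of chosen representatives $w_i\in[w_i]$ which come from $K$ pairwise distinct homotopy classes. The leading contribution comes from tuples whose $K$ placements are pairwise \emph{vertex-disjoint} (use disjoint sets of triangles); such tuples automatically give distinct homotopy classes. For a vertex-disjoint tuple, the number of possible configurations is asymptotic to $\prod_i\bigl((2N)^{|w_i|}\cdot 3^{|w_i|}\bigr)^{k_i}$: at each of the $\sum k_i|w_i|$ slots one picks a fresh triangle out of ${\sim}2N$ and a fresh entering side out of $3$, while $w_i$ determines the exiting side. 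The configuration-model probability that the $\sum k_i|w_i|$ required gluings all occur is $\prod_j(6N-2j-1)^{-1}\sim(6N)^{-\sum k_i|w_i|}$. The product of these asymptotics is $1$, and after dividing by $\prod_i c_i^{k_i}$ the leading term becomes exactly $\prod_i\lambda_{[w_i]}^{k_i}$, as required.

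The hard part will be to show that the remaining contributions are $o(1)$. Two kinds of errors arise. First, tuples of placements that share at least one triangle: each shared triangle removes one of the ${\sim}2N$ combinatorial factors but saves at most one of the ${\sim}1/(6N)$ probabilistic factors, so every overlap costs a factor $O(1/N)$; since $W$ is finite and $K$ is bounded, the total overlap contribution is $O(1/N)$. Second, overlapping tuples in which two distinct placements secretly come from the same homotopy class (which must be excluded when passing to the falling factorial); such coincidences likewise force additional non-generic identifications between sides and contribute $O(1/N)$. Combining the main-term computation with these error estimates yields the factorial moment limit, and the product form of the limit simultaneously encodes the claimed Poisson marginals and the joint independence of the $Z_{[w]}$.
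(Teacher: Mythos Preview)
Your plan is the same as the paper's: both adapt Bollob\'as' factorial-moment argument for cycle counts in random regular graphs by counting labeled placements of $[w]$-circuits and separating vertex-disjoint tuples (giving the main term $\prod_{[w]}\lambda_{[w]}^{k_{[w]}}$) from overlapping ones (contributing $o(1)$). The paper is in fact terser than you are here, explicitly deferring the overlap bound to \cite{Bol1}.

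One accounting slip to fix: as written, ``each shared triangle removes a ${\sim}2N$ combinatorial factor but saves at most one ${\sim}1/(6N)$ probabilistic factor'' yields a net ratio of $O(1)$, not $O(1/N)$. The correct mechanism (made explicit in the paper's $(i,j,k)$ double-circuit lemma) is that in a cubic graph two circuits sharing a vertex must share an edge, and each connected component of their intersection is a path with one more shared vertex than shared edges; a union of two circuits with $j\geq 1$ intersection components therefore has exactly $j$ more edges than vertices, so its expected count is $O(N^{-j})$. A second minor point: write ``circuits'' rather than ``closed walks'' when relating $Y_{N,w}$ to $Z_{N,[w]}$, since $Z_{N,[w]}$ counts only simple cycles and the exact identity $Y_{N,w}=c\,Z_{N,[w]}$ fails if non-simple walks are allowed (though these are themselves an $O(1/N)$ contribution for the same edge-versus-vertex reason).
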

This should be interpreted as the limit of any finite part of the length spectrum. Given any fixed set of fixed lengths $\ell_1,\ell_2,\ldots,\ell_k \in\mathbb{R}$ and any $n_1,\ldots,n_k\in\mathbb{N}$, Theorem B gives the asymptotic probability that a random surface has $n_i$ closed curves of length $\ell_i$ for all $i=1,\ldots,k$. Again using the results of Brooks \cite{Bro} and Brooks and Makover \cite{BM}, this reasoning also applies to the compactified case.

For the Riemannian model we are only able to give a lower bound on the $\liminf$ and an upper bound for the $\limsup$ of the expected value of the length of the systole. This has to do with the fact that in this model there is not such a simple way of computing the exact length of a curve in terms of the combinatorial data of the surface.

For this model we need to fix a Riemannian metric on our triangle. Let us first fix the standard $2$-simplex:
\begin{equation*}
\Delta = \verz{t_1e_1+t_2e_2+t_3e_3}{(t_1,t_2,t_3)\in [0,1]^3,\; t_1+t_2+t_3=0}
\end{equation*}
where $\{e_1,e_2,e_3\}$ is the standard basis of $\mathbb{R}^3$. The class of metrics on $\Delta$ we will study is the class of Riemannian metrics that are what we will call symmetric in the sides. Symmetric in the sides will be a set of conditions that make sure that the metric on the resulting surface is sufficiently regular (see section \ref{sec_GeomRiem} for the definition). 

Given such a metric $d:\Delta\times\Delta\rightarrow [0,\infty)$ we define:
\begin{equation*}
m_1(d) = \min\verz{d(s,s')}{s,s' \text{ opposite sides of a gluing of two copies of }(\Delta,d)\text{ along one side}}
\end{equation*}
and:
\begin{equation*}
m_2(d) = \max\verz{d\left(\frac{e_i+e_j}{2},\frac{e_k+e_l}{2}\right)}{i,j,k,l\in\{1,2,3\},\;i\neq j,\;k\neq l}
\end{equation*}
We are able to prove the following:
\begin{thmC}
Any Riemannian model satisfies:
$$
m_1(d)\leq \liminf_{N\rightarrow\infty}\ExV{}{\sys_N}
$$
and
$$
\limsup_{N\rightarrow\infty}\ExV{}{\sys_N} \leq m_2(d)\sum_{k=2}^\infty k\left(e^{-\sum\limits_{j=1}^{k-1}\frac{2^{j-1}-1}{j}} -e^{-\sum\limits_{j=1}^k\frac{2^{j-1}-1}{j}}\right)
$$
\end{thmC}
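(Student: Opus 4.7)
I would prove the two inequalities by very different methods. For the lower bound, the argument is essentially pointwise: I claim that $\sys_N(\omega)\geq m_1(d)$ for every gluing $\omega\in\Omega_N$. Let $\gamma$ be a systole-realising closed geodesic on the corresponding surface, written as a cyclic concatenation of geodesic arcs $\gamma_0,\ldots,\gamma_{k-1}$, with $\gamma_i$ inside a triangle $T_i$ and joining the shared sides $s_{i-1,i}$ and $s_{i,i+1}$. To each arc $\gamma_i$ I attach the unique \emph{turning vertex} $v_i$ of $T_i$ lying on both of these sides. Inspecting the quadrilateral $T_i\cup T_{i+1}$ (glued along their common side), one sees that $v_i=v_{i+1}$ exactly when $s_{i-1,i}$ and $s_{i+1,i+2}$ are adjacent sides of that quadrilateral, whereas $v_i\neq v_{i+1}$ forces them to be opposite sides and therefore forces the corresponding stretch of $\gamma$ to have length at least $m_1(d)$. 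If every turning vertex agreed, $\gamma$ would trace a loop in the link of a single triangulation vertex and would bound its star, contradicting non-contractibility; hence some $i$ with $v_i\neq v_{i+1}$ must exist, giving the pointwise lower bound.

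For the upper bound I would use the dual $3$-regular graph to build short non-contractible test curves. To any simple cycle $C$ of length $k$ in the dual graph I associate the closed curve $\gamma_C$ obtained by concatenating, inside each triangle visited by $C$, the geodesic segment between the midpoints of the two edges through which $C$ enters and leaves. Each such segment has length at most $m_2(d)$, so $\ell(\gamma_C)\leq k\cdot m_2(d)$. The curves that come out contractible are precisely those produced by the vertex-figure cycles, i.e.\ the cycles enumerating the triangles around a single triangulation vertex, so the task reduces to estimating the length of the shortest simple cycle of the dual graph that is not a vertex-figure.

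The probabilistic input is a Poisson limit theorem in the spirit of Theorem B. In the random $3$-regular configuration model the joint count of simple $k$-cycles of the dual graph converges in distribution to independent Poissons with mean $2^{k-1}/k$, while the count of length-$k$ cycles of the corner permutation (the vertex figures) converges to an independent Poisson with mean $1/k$. A joint moment calculation then shows that the numbers of non-vertex-figure $k$-cycles converge jointly to independent Poissons with mean $\lambda_k=(2^{k-1}-1)/k$. Writing $p_k=\prod_{j=1}^{k}e^{-\lambda_j}$, the probability that the shortest non-vertex-figure cycle has length exactly $k$ tends to $p_{k-1}-p_k$. Combining the pointwise bound $\sys_N\leq m_2(d)\cdot(\text{length of shortest non-vertex-figure cycle})$ with a first-moment tail bound on cycle counts (ensuring uniform integrability) converts this distributional limit into the announced bound on $\limsup\ExV{}{\sys_N}$.

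The hardest points I expect are: (i) making rigorous the ``constant turning vertex implies null-homotopy'' step in the presence of cone points of arbitrary total angle, which is straightforward when all cone angles are positive but needs a separate argument in degenerate situations; (ii) establishing \emph{joint} Poisson independence of the vertex-figure and non-vertex-figure cycle counts, rather than only the marginals, via the method of factorial moments used for Theorem B; and (iii) upgrading the distributional convergence to convergence in expectation via a uniform tail bound on cycle counts whose length grows with $N$.
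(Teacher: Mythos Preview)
Your lower-bound argument coincides with the paper's: a non-contractible closed curve cannot turn around the same corner at every step, so somewhere it crosses a quadrilateral of two adjacent triangles through opposite sides and picks up length at least $m_1(d)$. The only slip is the claim that this holds for \emph{every} $\omega$; genus-$0$ gluings have no systole, and the paper handles this with the harmless factor $(1-\Pro{}{g_N=0})\to 1$.

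For the upper bound your overall strategy matches the paper's, but there is a real gap. The assertion that the contractible midpoint curves are \emph{precisely} the vertex-figure ones is false: a simple cycle of the dual graph that is \emph{separating on the graph} can bound a (punctured) disk on the surface and hence be null-homotopic on the compactification, without being a left-hand-turn circuit. Thus your pointwise inequality $\sys_N(\omega)\leq m_2(d)\cdot(\text{shortest non-vertex-figure cycle})$ can fail for individual $\omega$. The paper repairs this with Theorem~D (a random cubic graph has no separating circuit of length $\leq C\log_2 N$ with probability $\to 1$) and then restricts the expectation to a good set $\Omega_N\setminus B_N$ on which no short separating circuits exist, killing the contribution of $B_N$ via Gromov's systolic inequality. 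None of your three anticipated difficulties covers this; it is the main missing ingredient. Once it is in place, your direct Poisson computation for non-vertex-figure $k$-cycles with mean $(2^{k-1}-1)/k$ is equivalent to the paper's route (condition on the unoriented cycle counts $X_{N,j}$, then average over the $2^j$ equally likely orientations on each $j$-cycle), and your point~(iii) is exactly the role played by the uniform bound $\Pro{}{X_{N,k}=0}\leq Dk^8(3/8)^k$ that the paper establishes for dominated convergence.
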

\noindent We can also compute the sum expression in the upper bound. We get (see (\ref{eq_numRiem}) in section \ref{sec_RiemSys}):
\begin{equation*}
\limsup_{N\rightarrow\infty}\ExV{}{\sys} \leq 2.87038 \cdot m_2(d)
\end{equation*}

\noindent In the equilateral Euclidean case we have $m_1(d)=1$ and $m_2(d)=\frac{1}{2}$, so we get (see also (\ref{eq_numEucl1}) and (\ref{eq_numEucl2}) in section \ref{sec_RiemSys}):
\begin{equation*}
1\leq \liminf_{N\rightarrow\infty}\ExV{}{\sys_N} 
\end{equation*}
and:
\begin{equation*}
\limsup_{N\rightarrow\infty}\ExV{}{\sys_N} \leq 1.43519
\end{equation*}
In the Euclidean case there are steps in the approximation where we make a clear over-estimation of certain lengths, hence we do not expect this bound to be optimal.

However, in general the upper bound of this theorem is sharp in the sense that we can define a sequence of metrics with prescribed $m_2$ that comes arbitrarily close to the upper bound. The idea behind these metrics is to construct large bumps on the triangle that force short curves to trace a fixed path on every triangle (for more explanation see section \ref{sec_Sharpness}).

Furthermore, given a triangulated surface, we can construct a graph dual to the triangulation. The vertices of this graph are the triangles of the triangulation and two vertices are connected by $0$, $1$, $2$ or $3$ edges if their corresponding triangles share $0$, $1$, $2$ or $3$ sides respectively. The orientation on the surface also gives rise to a cyclic orientation of the edges emanating from the vertices. This oriented cubic graph (sometimes called a ribbon graph) combined with the metric on the triangles completely determine the surface. So all the information we want on the surface is fully encoded in the graph. This allows us to apply results from the theory of random regular graphs. In particular, we apply a theorem from Bollob\'as \cite{Bol1} that states the asymptotics of the probability distribution of the number of circuits of a fixed length on a random regular graph.

In some cases we will need to prove certain bounds and asymptotics about probabilities related to random regular graphs ourselves. A result that might be of independent interest in this context is the following:
\begin{thmD}
Let $C\in (0,1)$. We have:
$$
\lim\limits_{N\rightarrow\infty} \Pro{}{\substack{\text{A random cubic graph on }2N \text{ vertices contains}\\ \text{a separating circuit of }\leq C\log_2(N)\text{ edges }}} = 0
$$
\end{thmD}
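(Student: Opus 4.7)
The plan is a first-moment estimate in the configuration (pairing) model, cut down to a manageable range by the known edge-expansion of random cubic graphs. I would generate the graph by pairing $6N$ half-edges (three per vertex) uniformly at random; since the resulting multigraph is simple with probability bounded away from zero, the asymptotic statement transfers to the uniform random cubic graph. A circuit $\gamma$ is separating if and only if $G-E(\gamma)$ has a proper non-empty vertex subset $A$ that is \emph{closed}, meaning every remaining half-edge at $A$ is matched inside $A$. So my goal is to show that the expected number of pairs $(\gamma, A)$ with $|E(\gamma)|\le C\log_2 N$ tends to zero as $N\to\infty$.

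The first ingredient is the classical fact that the edge-Cheeger constant of a random cubic graph is at least some fixed $\alpha>0$ asymptotically almost surely. If a $k$-circuit $\gamma$ separates $G$ and $A$ is the smaller side, every edge crossing the cut is a circuit edge, so $|\partial A|\le k$, and expansion forces $|A|\le k/\alpha\le (C/\alpha)\log_2 N$. This reduction is essential: it restricts the union bound over candidate sets $A$ to sets of at most logarithmic size, preventing the $\binom{2N}{|A|}$ factor from dominating the probability estimates.

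Granted the reduction, the next step is the first-moment calculation. Writing $s$ for the number of circuit vertices in $A$, $t=|A|-s$, $a=|A|$, $M=6N-2k$ and $h=s+3t$, the probability that a specified pair $(\gamma,A)$ with these parameters appears in the pairing equals $6^{k}(h-1)!!(M-h-1)!!/(6N-1)!!$. Multiplying by the counting factors $\binom{2N}{k}(k-1)!/2$ (for the circuit) and $\binom{k}{s}\binom{2N-k}{t}$ (for $A$), applying the bound $(h-1)!!(M-h-1)!!/(M-1)!!\le (2h/M)^{h/2}$, and summing over $(s,t)$ with $s+t=a$ (where the parity of $h$ forces $a$ to be even) yields a per-$(k,a)$ bound of order $4^{k}a^{3a/2}/(k\,N^{a/2})$. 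The term $a=2$ contributes order $N^{C-1}\log N$, which vanishes because $C<1$, while the factor $(a^{3}/N)^{a/2}$ provides super-polynomial decay for $a\ge 4$. Summing over admissible $a$ and over $k\le C\log_2 N$ gives $o(1)$, and combining with a union bound against the low-probability complement of the expansion event yields Theorem D.

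The main obstacle I anticipate is uniformity across the range $k=O(\log N)$: since $\mathbb{E}[\#k\text{-circuits}]\sim 2^{k}/(2k)$ is of order $N^{C}/\log N$ at the top of the range, the conditional separation probability must be pushed below $N^{-C-\varepsilon}$. Verifying this for each sub-case $(s,t)$, while respecting the parity $h\equiv 0\pmod{2}$ and keeping the dependence on $k$ explicit, is the delicate but routine computational part.
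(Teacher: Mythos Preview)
Your approach is correct in outline and genuinely different from the paper's. The paper does not invoke edge expansion at all. Instead, it cuts the graph along a separating $k$-circuit, producing a disconnected graph with exactly $k$ vertices of degree~$1$ spread over at least two components (the set $D_{N,k}^o$), and bounds $|G_{N,k}^o|\le 2^k(k-1)!\frac{(3N)!}{(3N-k)!}|D_{N,k}^o|$. The cardinality of $D_{N,k}^o$ is computed \emph{exactly} as a double sum over $(L,l)$ --- the number of vertices and of degree-$1$ vertices on one side --- and that sum is then estimated via Stirling's formula together with a convexity argument in $L$, yielding $\Pro{}{G_{N,k}^o}\le 2^kRk^2/N$ directly; summing over $k\le C\log_2 N$ gives $O(N^{C-1}(\log N)^3)$. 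So the paper's argument is entirely self-contained and produces an explicit rate (used again later in Proposition~\ref{prp_badsurf}), whereas yours trades the somewhat delicate Stirling/convexity estimate for the standard but non-trivial expansion of random cubic graphs, after which the remaining first-moment computation is short and conceptual.

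One point you should tighten: your stated uniform per-$(k,a)$ bound ``of order $4^{k}a^{3a/2}/(k\,N^{a/2})$'' is too crude at $a=2$. Summing $4^k/(kN)$ over $k\le C\log_2 N$ gives $N^{2C-1}/\log N$, not the $N^{C-1}\log N$ you then quote, and $N^{2C-1}$ does not vanish for $C\in[\tfrac12,1)$. The slip is the blanket bound $\binom{k}{s}\le 2^k$; for $a=2$ one has $s\le 2$, so $\binom{k}{s}\le k^2$, which gives the correct $E_{k,2}=O(2^kk/N)$ and hence $\sum_k E_{k,2}=O(N^{C-1}\log N)$. For $a\ge 4$ the crude $4^k$ bound already suffices, since then $N^{2C}/N^{a/2}\le N^{2C-2}=o(1)$ and the factor $a^{3a/2}$ with $a=O(\log N)$ is $N^{o(1)}$. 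So the argument goes through once the $a=2$ case is handled separately --- exactly the ``delicate but routine'' issue you anticipated.
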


Finally, we would like to remark a few things:
\begin{itemize}[leftmargin=0.2in]
\item[-] From Theorems A and B it follows that in all settings the expected value of the systole is a bounded sequence. By Markov's inequality this implies that there exists a constant $R\in (0,\infty)$ depending only on the metric on the triangle such that:
$$
\Pro{}{\sys_N > x} \leq \frac{R}{x}
$$
for all $x\in (0,\infty)$ and $N\in\mathbb{N}$. So this result can be seen as a result on the decay of the tail opposite to the one that Mirzakhani exhibited in the Weil-Petersson setting.
\item[-] Using exactly the same arguments as in this article the limits of the expected values the first shortest $k$ curves can be computed and estimated up to any finite $k$.
\item[-] Some of the results in this article contradict the results of \cite{MM}. It is our belief that there are some gaps in the latter.
\end{itemize}

The organization of this article is as follows:
\begin{itemize}[leftmargin=0.2in]
\item[-] The first section explains the two models and the connection between random surfaces and cubic ribbon graphs,
\item[-] In the second section we study separating circuits and show that a random cubic ribbon graph does not have short separating circuits (Theorem D).
\item[-] In section three we prove a set of upper bounds for probabilities related to circuits on cubic graphs.
\item[-] In the last section we prove the main results (Theorems A, B and C).
\end{itemize} 

%%%%%%%%%%%%%%%%%%%%%%%%%%%%%%%%%%%%%%%%%%%%%%%%%
%		A C K N O W L E D G E M E N T
%%%%%%%%%%%%%%%%%%%%%%%%%%%%%%%%%%%%%%%%%%%%%%%%%
\subsection*{Acknowledgement}
The author would like to thank his doctoral advisor Hugo Parlier for all his support and for reading multiple drafts of this text. The author is also grateful to the organizers of the `Special Program on Teichm\"uller Theory' at the Erwin Schr\"odinger Institute in Vienna, where he stayed during the month of March 2013 and proved parts of the results presented here.

%%%%%%%%%%%%%%%%%%%%%%%%%%%%%%%%%%%%%%%%%%%%%%%%%
%		T H E   M O D E L S
%%%%%%%%%%%%%%%%%%%%%%%%%%%%%%%%%%%%%%%%%%%%%%%%%
\section{The models}\label{sec_model}

%%%%%%%%%%%%%%%%%%%%%%%%%%%%%%%%%%%%%%%%%%%%%%%%%
%		subsection: Random surfaces and partitions
%%%%%%%%%%%%%%%%%%%%%%%%%%%%%%%%%%%%%%%%%%%%%%%%%
\subsection{Random surfaces and partitions}
As mentioned in the introduction, topologically the two models for random surfaces we consider are the same. We will explain the topological model in the first two sections and in Sections \ref{sec_GeomHyp} and \ref{sec_GeomRiem} we will explain the geometry.

The construction starts by taking an even number of triangles (so: topological $2$-simplices) with cyclicly ordered sides\footnote{We will use a lot of graphs, so to avoid confusion we will speak about sides and corners instead of edges and vertices when speaking about triangles.}. Then the sides of these triangles are randomly partitioned into pairs. After that, we glue these triangles together along the sides that are paired. We do this in such a way that corners are glued to corners. We also require that the surface is oriented in such a way that the orientation corresponds to the ordering of the sides (via the right hand rule). These conditions ensure that the partition uniquely defines the surface. Because we use an even number of triangles the surface has no boundary components.

To describe the probability space, we label the triangles with the numbers $1,2,\ldots,2N$ and their sides with the numbers $1,2,\ldots,6N$, where the sides $1,2,3$ belong to triangle $1$, sides $4,5,6$ belong to triangle $2$ et cetera. We do this in such a way that the cyclic ordering of the sides of triangle $i$ corresponds to $(3i-2,3i-1,3i)$. So a random surface in our model corresponds to a partition of the set $\{1,2,\ldots 6N\}$ into pairs. In some computations we will want to make a distinction between two different orders of picking the pairs, so we will be interested in the following two sets:
\begin{equation*}
\Omega_N=\left\{\text{Partitions of }\{1,2,\ldots 6N\}\text{ into pairs} \right\}
\end{equation*}
and
\begin{equation*}
\Omega_N^o=\left\{\text{Ordered partitions of }\{1,2,\ldots 6N\}\text{ into pairs} \right\}
\end{equation*}
We have:
\begin{equation*}
\aant{\Omega_N^o} = \frac{(6N)!}{2^{3N}}
\end{equation*}
and
\begin{equation*}
\aant{\Omega_N} = \frac{\aant{\Omega_N^o}}{(3N)!} = (6N-1)(6N-3)\cdots 3\cdot 1
\end{equation*}

We can turn $\Omega_N$ and $\Omega_N^o$ into probability spaces by giving every element equal probability. So, if $A_N\subseteq\Omega_N$ and $A_N^o\subseteq\Omega^o_N$ then:
\begin{equation*}
\Pro{}{A_N}=\frac{\aant{A_N}}{\aant{\Omega_N}}
\end{equation*}
and
\begin{equation*}
\Pro{}{A_N^o}=\frac{\aant{A_N^o}}{\aant{\Omega_N^o}}
\end{equation*}
Note that in principle the two equations above define two different infinite sequences of probability measures. However, to prevent double labelling we will not label the probability measures.

Also, the order in which we pick the pairs of the partition has no influence on the surface. This means that in $\Omega_N^o$ we just get $(3N)!$ copies of every surface in $\Omega_N$. So the probability of events only depending on properties of the surface does not depend on whether we use $\Omega_N^o$ or $\Omega_N$ in the computation.

%%%%%%%%%%%%%%%%%%%%%%%%%%%%%%%%%%%%%%%%%%%%%%%%%
%		subsection: Cubic ribbon graphs
%%%%%%%%%%%%%%%%%%%%%%%%%%%%%%%%%%%%%%%%%%%%%%%%%
\subsection{Cubic ribbon graphs}
These random surfaces can also be described with so-called cubic ribbon graphs. `Cubic' means that every vertex in the graph has degree $3$, sometimes these graphs are also called trivalent. We will not make a distinction between graphs and multigraphs, i.e. a graph can have loops and multiple edges. `Ribbon graph' means that the graph is equipped with an orientation, where an orientation is a cyclic order at every vertex of the edges emanating from this vertex. We will sometimes depict the orientation at a vertex with an arrow as in the picture below.
\begin{figure}[H] 
\begin{center} 
\includegraphics[scale=1]{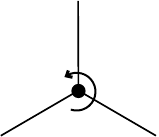} 
\caption{A vertex of a cubic ribbon graph.}
\label{pic4}
\end{center}
\end{figure}
\noindent This orientation gives a notion of turning left or right when traversing a vertex on the graph.

The number of vertices of a cubic ribbon graph $\Gamma = (V,E)$, or in fact of any cubic graph, must be even. This follows from the fact that $3\aant{V} = 2\aant{E}$ for cubic graphs. Hence we can assume that our graph has $2N$ vertices for some $N\in\mathbb{N}$.

Cubic ribbon graphs with $2N$ vertices can also be described by partitions  of the set $\{1,2,\ldots,6N\}$. We label the vertices with the numbers $1,2,\ldots 2N$ and the half-edges emanating from the vertices $1,2,\ldots,6N$, in such a way that the cyclic order of the half-edges at the vertex $i$ corresponds to $(3i-2,3i-1,3i)$ for all $1\leq i\leq 2N$, as in figure \ref{fig_labvert} below. 
\begin{figure}[H] \label{fig_labvert}
\begin{center} 
\includegraphics[scale=1]{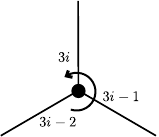} 
\caption{Vertex $i$ of a labeled cubic ribbon graph.}
\label{pic3}
\end{center}
\end{figure}

A partition $\omega$ then corresponds to a cubic ribbon graph $\Gamma(\omega)$ by connecting two half-edges if they form a pair in the partition.

It is not difficult to see that we obtain all possible cubic ribbon graphs like this. In fact, because of the labeling, we even obtain many isomorphic copies of every graph. This means that we can interpret $\Omega_N^o$ and $\Omega_N$ with the probability measures induced by counting measures as probability spaces for random cubic graphs as well. We note that also the properties of the graph do not depend on the ordering of the pairs. So events depending only on graph theoretic properties, like those depending only on the properties of the resulting surface, have the same probability in $\Omega_N^o$ and $\Omega_N$. The model for random cubic graphs that we obtain is a well known model for random cubic graphs (see for example \cite{Bol2} or \cite{Wor}) that is sometimes called the \emph{Configuration model}.

The cubic ribbon graph corresponding to a partition is dual to the triangulation of the surface corresponding to this same partition. That is to say, we can obtain the graph by adding a vertex to every triangle of the triangulation and then adding an edge between two vertices if the corresponding triangles share a side. If we label the graph in the same way as the triangles on the surface, we obtain the cubic ribbon graph corresponding to the partition.

The construction above describes an embedding of the random cubic ribbon graph into the random surface corresponding to the same partition. Figure \ref{pic1} shows what the graph looks like on the surface:
\begin{figure}[H] 
\begin{center} 
\includegraphics[scale=1]{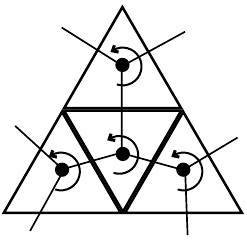} 
\caption{A part of a triangulation and its dual graph.}
\label{pic1}
\end{center}
\end{figure}

%%%%%%%%%%%%%%%%%%%%%%%%%%%%%%%%%%%%%%%%%%%%%%%%%
%		subsubsection: Recovering the topology from the graph
%%%%%%%%%%%%%%%%%%%%%%%%%%%%%%%%%%%%%%%%%%%%%%%%%
\subsubsection{Recovering the topology from the graph}

In genreal we will actually reason the other way around: we will look at random cubic ribbon graphs and try to derive properties of the corresponding surfaces from the properties of the graph. The first natural problem is recovering the topology of the surface from properties of the graph. This can be done using the Euler characteristic. So, a random graph corresponds to a triangulated surface and we want to recover the number of triangles, sides and corners from the graph. The number of triangles is the number of vertices of the graph, which is given and equal to $2N$. Likewise the number of sides can be easily recovered, this is equal to the number of edges and from the earlier mentioned fact that $3\aant{V} = 2\aant{E}$ we get that this is equal to $3N$. The difficult thing to recover is the number of corners. Around a corner the surface looks like the picture below:

\begin{figure}[H] 
\begin{center} 
\includegraphics[scale=1]{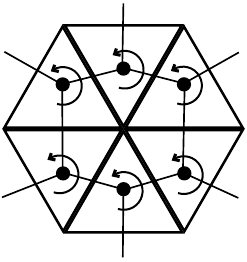} 
\caption{A part of a triangulation around a corner.}
\label{pic2}
\end{center}
\end{figure}

The sides of all the triangles around the corner have to be ordered consistently, because these orders have to correspond to the orientation on the surface. This means that if we walk along the circuit on the graph around this corner on the surface, we turn in the same direction at every vertex of the graph with respect to the orientation on the graph. Whether this direction is a constant `left' or `right' depends on the direction in which we traverse the circuit. So we can conclude that corners correspond to left hand turn circuits on the graph.

Using this fact, one can estimate the expected value of the genus. This was done by Gamburd and Makover in \cite{GM}, Brooks and Makover in \cite{BM}, Pippenger and Schleich in \cite{PS} and Gamburd in \cite{Gam}. We state the version of Brooks and Makover here. For $\omega\in\Omega_N$ we will denote the genus of the surface corresponding to $\omega$ by $g_N(\omega)$.
\begin{thm}\label{thm_genus} \cite{BM} There exist constants $C_1,C_2\in (0,\infty)$ such that the expected value of the genus satisfies:
$$
1+\frac{N}{2}-\left(C_1+\frac{3}{4}\log(N)\right) \leq \ExV{}{g_N} \leq 1+\frac{N}{2}-\left(C_2+\frac{1}{2}\log(N)\right) 
$$
\end{thm}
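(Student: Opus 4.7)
My plan is to use the Euler formula to reduce the problem to bounding the expected number of corners $F_N$ of the random triangulation. Since the triangulated surface has $2N$ triangles and $3N$ sides, the Euler relation gives $F_N - 3N + 2N = 2 - 2g_N$, hence $g_N = 1 + N/2 - F_N/2$. The theorem is therefore equivalent to the two-sided estimate
\begin{equation*}
\log N + O(1)\;\leq\; \ExV{}{F_N} \;\leq\; \tfrac{3}{2}\log N + O(1).
\end{equation*}

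As recalled just before the theorem statement, corners correspond bijectively to left-hand-turn (LHT) circuits of the dual cubic ribbon graph. Writing $F_{N,k}$ for the number of LHT circuits of combinatorial length $k$, we have $F_N=\sum_{k\geq 1}F_{N,k}$ together with the deterministic identity $\sum_k kF_{N,k}=6N$ (each half-edge lies in exactly one LHT orbit). The core of the proof is a sufficiently precise estimate of $\ExV{}{F_{N,k}}$ in the partition model. An LHT circuit of length $k$ is encoded, up to cyclic rotation, by a sequence $(v_1,h_1,\ldots,v_k,h_k)$ of (vertex, in-half-edge) pairs; it is realized by the random partition precisely when the $k$ prescribed pairs $(h'_i,h_{i+1})$ are all present in the matching, where $h'_i$ is the left-neighbour of $h_i$ at $v_i$ (indices mod $k$). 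There are $(6N)^k$ such ordered sequences with $k$-fold cyclic overcount, and the probability that $k$ prescribed disjoint pairs lie in a uniform perfect matching of $6N$ half-edges is $\prod_{j=0}^{k-1}1/(6N-2j-1)$. This yields the leading-order asymptotic $\ExV{}{F_{N,k}}\approx 1/k$, together with explicit lower-order corrections arising from "non-generic" sequences in which some of the $h'_i$ and $h_j$ coincide (which corresponds to revisited vertices).

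For the lower bound on $\ExV{}{F_N}$, I would show $\ExV{}{F_{N,k}}\geq (1-o(1))/k$ for $k$ in a range $\{1,\ldots,cN\}$ for some small positive $c$, by restricting to sufficiently "generic" sequences and discarding non-negative coincidence terms; summation then yields $\ExV{}{F_N}\geq \log(cN)+O(1) = \log N + O(1)$. For the upper bound, I would establish a uniform estimate $\ExV{}{F_{N,k}}\leq C/k$ for some constant $C\leq 3/2$ and all $k\leq 6N$, absorbing the coincidence corrections into $C$; summation together with $F_{N,k}=0$ for $k>6N$ then gives $\ExV{}{F_N}\leq C\log(6N)+O(1)$, i.e.\ the stated $\tfrac{3}{2}\log N + O(1)$.

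The principal obstacle is the careful control of the lower-order corrections to $\ExV{}{F_{N,k}}\approx 1/k$. These corrections arise from sequences with coincidences among the half-edges (equivalently, LHT circuits revisiting a vertex one or more times), and must be enumerated by stratifying according to the multiplicity pattern of vertex visits. The enumeration is combinatorial but elementary, since at each vertex there are only three half-edges and hence at most three possible visits; one shows that these contributions sum to at most a bounded multiplicative factor in the upper bound (providing the constant $3/2$) while not spoiling the lower bound with coefficient~$1$. Once these two estimates on $\ExV{}{F_{N,k}}$ are in hand, the summation over $k$ is immediate.
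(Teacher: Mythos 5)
This theorem is stated in the paper with a citation to \cite{BM} (Brooks--Makover) and is not proved there, so there is no in-paper argument for your proposal to be compared against; I can only assess your plan on its own terms.

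Your reduction is correct: the Euler formula $F_N - 3N + 2N = 2-2g_N$ gives $g_N = 1 + N/2 - F_N/2$, so the theorem is equivalent to $\log N + O(1) \leq \ExV{}{F_N} \leq \frac{3}{2}\log N + O(1)$, and the identification of corners with left-hand-turn circuits of the dual ribbon graph is exactly the observation the paper makes just before the statement. The lower bound via $\ExV{}{F_{N,k}} \geq (1-o(1))/k$ for $k\leq cN$, followed by summing the harmonic series, is a sound plan, since non-generic configurations contribute non-negatively and restricting to generic cyclic sequences of half-edges is a legitimate way to obtain a lower bound.

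The genuine gap is in the upper bound. You assert a uniform estimate $\ExV{}{F_{N,k}} \leq C/k$ with $C \leq 3/2$ for \emph{all} $k \leq 6N$, but the stated theorem requires precisely the coefficient $3/2$ (not just some constant), and your sketch offers no mechanism that would produce that particular number: you say the coincidence contributions "sum to at most a bounded multiplicative factor (providing the constant $3/2$)," which is exactly what needs to be proved, not a reason for it. Moreover, for $k$ comparable to $6N$ every LHT circuit revisits vertices many times, so the generic/coincidence decomposition degenerates completely and gives no uniform control; you would need a different argument in that regime, e.g.\ exploiting the deterministic identity $\sum_k k F_{N,k} = 6N$ (which you record but never use) to bound $\sum_{k>cN}\ExV{}{F_{N,k}} = O(1)$, and then only the uniform bound on $\{1,\dots,cN\}$ would be needed. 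Even with that fix, producing the sharp constant $3/2$ from a stratification by vertex-visit multiplicities remains unjustified: as sketched, the argument gives some constant, and it is not evident that it is $\leq 3/2$. Since the paper only invokes the lower bound on $\ExV{}{g_N}$ (equivalently, the upper bound $\ExV{}{F_N}=O(\log N)$) and with an arbitrary constant, a weaker version of your claim would suffice for the paper's downstream use, but it would not establish the theorem as literally stated.
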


%%%%%%%%%%%%%%%%%%%%%%%%%%%%%%%%%%%%%%%%%%%%%%%%%
%		subsubsection: Curves and circuits
%%%%%%%%%%%%%%%%%%%%%%%%%%%%%%%%%%%%%%%%%%%%%%%%
\subsubsection{Curves and circuits}

An important fact is that curves on the surface are homotopic to curves on the graph, where with curves on the graph we mean curves on the graph embbeded into the surface in the way described above. From hereon we will often think of the graph as embedded into the surface in this way without mentioning it. This homotopy can be realized as follows: 
we divide the curve in pieces, such that every piece corresponds to the curve entering and leaving one specific triangle exactly once. We then homotope these entry and exit points to the midpoints of the corresponding sides triangle (where the edge of the graph cuts the side). After that we homotope the curve onto the two half-edges connecting the sides to the vertex corresponding to the triangle in every piece of the curve. Figure \ref{pic24} shows an example:
\begin{figure}[H] 
\begin{center} 
\includegraphics[scale=1]{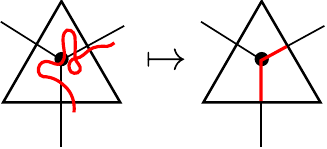} 
\caption{Homotopy.}
\label{pic24}
\end{center}
\end{figure}
 This means that we can use results on curves on random graphs for the study of curves on random surfaces, which is the main reason we make the connection with graphs.

There is one problem: when we homotope a curve on the surface to a curve on the graph, it does not necessarily maintain its properties. For instance, a simple curve on the surface does not always homotope to a simple curve on the graph. However, the following proposition tells us that the homotopic image of a non null homotopic curve does always contain a circuit.

\begin{prp}\label{prp_circuits} Let $\gamma$ be a non null homotopic curve on the surface corresponding to a partition $\omega\in\Omega_N$ and $\gamma'$ a homotopic image of $\gamma$ on $\Gamma(\omega)$ then $\gamma'$ contains a non null homotopic circuit.
\end{prp}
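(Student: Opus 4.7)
The plan is to find a minimum-length closed sub-walk of $\gamma'$ that is still non null homotopic on the surface, and to show that any such minimizer must be a simple circuit. Since $\gamma'$ is freely homotopic to $\gamma$, it is itself non null homotopic on the surface, so the collection $\mathcal{C}$ of non null homotopic closed sub-walks of $\gamma'$ is non-empty; because the length (number of edges) of any sub-walk is bounded by the length of $\gamma'$, the set $\mathcal{C}$ admits a member $\gamma^*$ of minimum length.

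The main step is to argue that $\gamma^*$ is a simple circuit on $\Gamma(\omega)$, i.e.\ that no vertex is visited twice except to close up at the basepoint. Write $\gamma^* = v_0 e_1 v_1 \cdots e_m v_m$ with $v_0 = v_m$, and suppose for contradiction that some pair $(i,j) \neq (0,m)$ with $i < j$ satisfies $v_i = v_j$. Then $\gamma^*$ decomposes into two closed sub-walks based at $v_i = v_j$: the middle piece $\alpha = v_i e_{i+1} \cdots e_j v_j$, and the wrap-around piece $\beta = v_j e_{j+1} \cdots e_m v_m = v_0 e_1 \cdots e_i v_i$, each of which is a closed sub-walk of $\gamma'$ in its own right. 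Cyclic rotation of $\gamma^*$ on the surface shows that $\gamma^*$ is freely homotopic to the concatenation $\alpha\beta$; hence, if $\gamma^*$ represents a non-trivial conjugacy class in $\pi_1(S)$, then the product $[\alpha][\beta] \in \pi_1(S, v_i)$ is non-trivial, so at least one of $[\alpha], [\beta]$ is non-trivial. That factor is a strictly shorter non null homotopic closed sub-walk of $\gamma'$, contradicting the minimality of $\gamma^*$. Therefore $\gamma^*$ is simple, and it is a non null homotopic circuit contained in $\gamma'$, as required.

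The only bookkeeping subtlety — and arguably the main obstacle in writing this carefully — is checking that the two pieces $\alpha, \beta$ really are strictly shorter than $\gamma^*$; this fails only if one of them has no edges, which in turn forces $(i,j) = (0,m)$, precisely the degenerate case we excluded. Once this is verified the argument is purely combinatorial. It is worth noting that it does not rely on the specific homotopy from $\gamma$ to $\gamma'$ described just before the proposition: any closed walk on $\Gamma(\omega)$ representing a non-trivial conjugacy class in $\pi_1(S)$ contains a simple non null homotopic circuit as a sub-walk.
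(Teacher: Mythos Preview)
Your argument is correct and takes a genuinely different route from the paper's. The paper argues by contradiction in one line: if every circuit contained in the image of $\gamma'$ were null-homotopic on the surface, then one could contract all such circuits and $\gamma'$ would be homotopic to a walk on a subtree of $\Gamma(\omega)$, hence null-homotopic. (Unpacked, this is the observation that $\pi_1$ of the image subgraph is generated by circuits, so if they all die in $\pi_1(S)$ then so does $[\gamma']$.) Your approach is constructive instead: pick a shortest non-null-homotopic closed walk among those contained in $\gamma'$ and show via the $\alpha\beta$ splitting that it must be simple. This produces the circuit explicitly and stays purely combinatorial, whereas the paper's version is terser but leans on a small $\pi_1$ fact.

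One point deserves tightening. You assert that both $\alpha$ and $\beta$ are closed sub-walks of $\gamma'$, but if ``sub-walk'' means a contiguous cyclic segment, the wrap-around piece $\beta$ need not be contiguous in $\gamma'$ once $\gamma^*$ is a proper segment of $\gamma'$: the two arcs $v_j\to v_m$ and $v_0\to v_i$ are separated in $\gamma'$ by whatever lies outside $\gamma^*$. The fix is painless. Either declare $\mathcal{C}$ to consist of non-null-homotopic closed walks whose cyclic edge sequence is a cyclic \emph{subsequence} of that of $\gamma'$ (then $\alpha$ and $\beta$, being complementary arcs of the cycle $\gamma^*$, automatically inherit this property), or run the splitting iteratively starting from $\gamma^{(0)}=\gamma'$ and at each step delete a contiguous null-homotopic loop; the length strictly drops and the process terminates at a simple circuit. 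Either formulation also yields exactly the containment the paper uses in the next proposition, namely that the word on $\gamma'$ is obtained from the word on the circuit by inserting letters.
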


\begin{proof} We argue by contradiction. Suppose that $\gamma'$ contains no homotopocially non-trivial circuits. That means that we can contract all circuits and $\gamma'$ is homotopic to a subtree of $\Gamma(\omega)$. Trees are homotopically trivial, which concludes the proof. \end{proof}

%%%%%%%%%%%%%%%%%%%%%%%%%%%%%%%%%%%%%%%%%%%%%%%%%
%		subsection: Geometry of the hyperbolic model
%%%%%%%%%%%%%%%%%%%%%%%%%%%%%%%%%%%%%%%%%%%%%%%%%
\subsection{Geometry of the hyperbolic model}\label{sec_GeomHyp}

%%%%%%%%%%%%%%%%%%%%%%%%%%%%%%%%%%%%%%%%%%%%%%%%%
%		subsubsection: Ideal triangulations
%%%%%%%%%%%%%%%%%%%%%%%%%%%%%%%%%%%%%%%%%%%%%%%%%
\subsubsection{Ideal triangulations}

The first way of putting a metric on the surface we consider uses ideal hyperbolic triangles as in \cite{BM}. Let $\mathbb{H}^2=\verz{z\in\mathbb{C}}{\mathrm{Im}(z)>0}$ be the upper half plane model of the hyperbolic plane. We will use $2N$ isometric copies of the triangle $T\subset\mathbb{H}^2$, given by the vertices $0$,$1$ and $\infty$, shown in the picture below:
\begin{figure}[H] 
\begin{center} 
\includegraphics[scale=1]{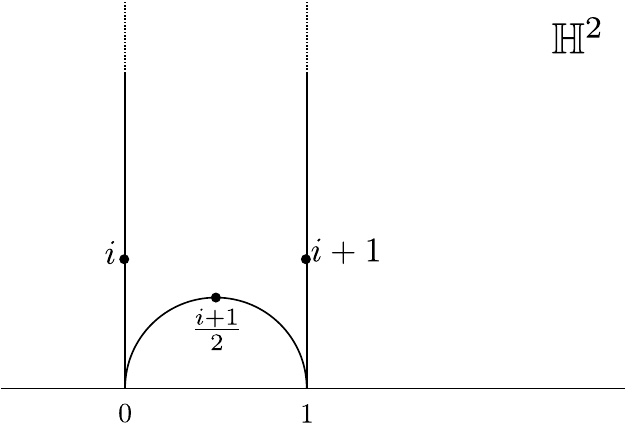} 
\caption{The triangle $T$}
\label{pic6}
\end{center}
\end{figure}

Note that in this model the corners of the triangles turn into punctures. To define the gluing we assign a midpoint to every edge of $T$: $i$, $i+1$ and $\frac{i+1}{2}$ and add the condition that midpoints are glued to midpoints (this comes down to $0$ shearing).

If $\omega\in\Omega_N$, the surface obtained by gluing together isometric copies of $T$ will be denoted $S_O(\omega)$.

We will be interested in curves on the surface and in particular in their length. Given a homotopically non-trivial closed curve $\gamma$, it follows from the fact that the metric on our random surface is hyperbolic that there exists a unique geodesic homotopic to $\gamma$ (see for instance Theorem 1.6.6 in \cite{Bus}). We will sometimes call this geodesic the \emph{geodesic representative} of $\gamma$. In the case of random surface constructed with ideal triangles the length of this geodesic can be computed from the combinatorial data of the gluing. In what follows, we will explain how this computation can be done.

We will write $S_O(\omega)=\mathbb{H}^2/G$, where $G$ is a discrete subgroup of $\mathrm{PSL}_2(\mathbb{R})$. A closed curve $\gamma$ corresponds to a conjugacy class of $G$ (the set of elements in $\pi_1(S_O,x)\simeq G$ freely homotopic to $\gamma$ forms a conjugacy class). Given a M\"obius transformation $g$ in this class, the geodesic in $\mathbb{H}^2$ on which the translation distance of $g$ is realized is called the axis $\alpha_g\subset\mathbb{H}^2$  of $g$. $\alpha_g$ projects to the unique geodesic homotopic to $\gamma$. The length of this geodesic is exactly the distance between $p$ and $g(p)$ for any point $p\in\alpha_g$. This in turn is equal to the translation length of $g$.

So, given a closed curve $\gamma$ on $S_O(\omega)=\mathbb{H}^2/G$, we need to find a corresponding group element $g\in G$. We pick a starting point on the curve $\gamma$, this point we identify with a point $p$ in the ideal triangle $T\subset\mathbb{H}^2$. After this starting point $\gamma$ travels over a sequence of triangles and eventually enters the triangle containing the starting point again and returns to the starting point. We copy this sequence of triangles into the hyperbolic plane, where we keep track of whether $\gamma$ turns right or left on the triangle. So every time $\gamma$ turns right between two triangles, we glue the new triangle on the right hand side (with respect to the direction of travel of $\gamma$) of the triangle before and when $\gamma$ turns left we glue it on the left hand side. It could also happen that $\gamma$ visits the same triangle twice, but also in that case we will glue a new triangle to the domain. We do this until $\gamma$ returns to the initial triangle for the last time. 

Figure \ref{pic25} shows an example: we have (topologically) drawn a piece of a surface $S_O(\omega)$ and a curve $\gamma$ on it. The small lines running through some of the sides are meant to indicate that some sides are identified on $S_O(\omega)$ (those with equal numbers of lines). The curve $\gamma$ starts at the triangle $A$, then goes through the triangles $B$, $C$, $B$ again and then $C$ again before returning to triangle $A$. If we apply the procedure described above we get a sequence of triangles $A,B,C,B',C',A'$ in the hyperbolic plane. $\gamma$ lifts to a segment running between a lift of $p$ on the triangle $A$ and a point on $A'$, which will be the image of $p$ under the M\"obius transformation $g$ we are looking for.

\begin{figure}[H] 
\begin{center} 
\includegraphics[scale=0.9]{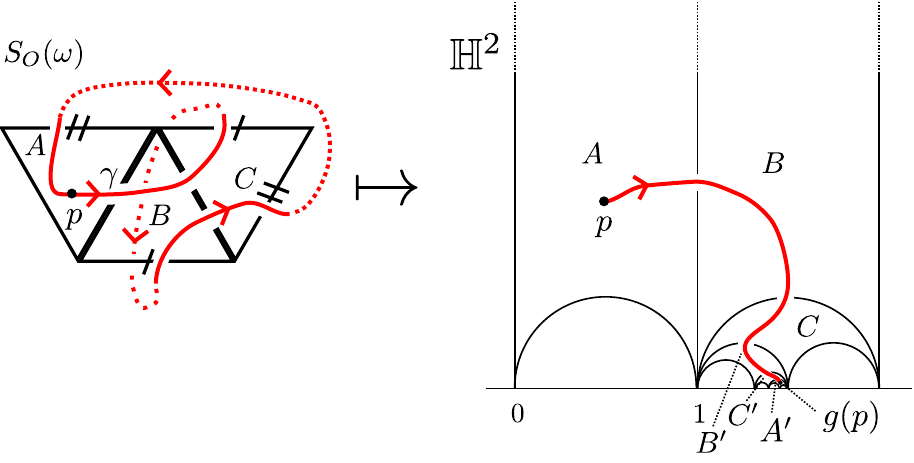} 
\caption{Turning a curve into a group element.}
\label{pic25}
\end{center}
\end{figure}

So now we need to find the M\"obius transformation $g$. This is the M\"obius transformation that (in our example) identifies the triangle $A$ with the triangle $A'$. We can construct this from the two M\"obius transformations $g_L:\mathbb{H}^2\rightarrow\mathbb{H}^2$ and ${g_R:\mathbb{H}^2\rightarrow\mathbb{H}^2}$, determined by the matrices:
\begin{equation*}
L=\left(
\begin{array}{cc}
1 & 1 \\
0 & 1
\end{array}
\right)
\text{ and }
R=\left(
\begin{array}{cc}
1 & 0 \\
1 & 1
\end{array}
\right)
\end{equation*}
respectively. So we have:
\begin{equation*}
g_L(z)=z+1\text{ and }g_R(z)=\frac{z}{z+1}
\end{equation*}
for all $z\in\mathbb{H}^2$. This means that:
\begin{equation*}
g_L([0,\infty])=[1,\infty],\; g_R([0,\infty])=[0,1],\; {g_L(i)=i+1} \text{ and } g_R(i)=\frac{i+1}{2}
\end{equation*}
where $[a,b]$ for $a,b\in\mathbb{H}^2\cup\partial\mathbb{H}^2$ denotes the geodesic (segment) between $a$ and $b$. 

When we go back to our example we see that $g_L(A)=B$ and $g_R(B)=C$ and so forth. This means that the group element we are looking for in the example is: $g=g_Lg_Rg_Rg_L$.

So, in general traveling along a curve $\gamma$ on the surface corresponds to a transformation composed of $g_L$s and $g_R$s, where every time we take a `left turn' we compose with a $g_L$ and every time we take a `right turn' we compose with a $g_R$. Furthermore, the length of the unique geodesic homotopic to $\gamma$ is the translation length of this composition of $g_L$s and $g_R$.

Suppose the word $W$ in $g_L$ and $g_R$ corresponding to a curve $\gamma$ is given by ${W=W_1W_2\cdots W_k}$ with $W_i\in\{g_L,g_R\}$ and $k\in\mathbb{N}$. Then standard hyperbolic geometry tells us that the translation length of $W$ is (see for instance \S\ 7.34 in \cite{Bea}):
\begin{equation*}
\ell_{\mathbb{H}}(\gamma)= 2\cosh^{-1}\left(\frac{\tr{w_1w_2\cdots w_k}}{2}\right)
\end{equation*}
where:
\begin{equation*}
w_i=\left\{
\begin{array}{ll}
L & \text{if }W_i=g_L \\
R & \text{if } W_i=g_R
\end{array}
\right.
\end{equation*}
for all $1\leq i\leq k$.

We note again that all this data can also be found in the graph. On a curve the orientation at a vertex tells us whether we are turning right or left. So we get the word corresponding to a curve by picking up a letter $L$ or $R$ at every vertex.

\begin{prp}\label{prp_circuits2}
Let $\omega\in\Omega_N$. Suppose $\gamma$ is a non null homotopic curve on $S_O(\omega)$ and $\gamma'$ the curve on $\Gamma(\omega)$ corresponding to the geodesic representative of $\gamma$. Then $\gamma'$ contains a homotopically non-trivial circuit $\gamma''$ with $\ell(\gamma'')\leq\ell(\gamma')\leq\ell(\gamma)$.
\end{prp}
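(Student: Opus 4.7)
The plan is to use the unique closed geodesic $\tilde\gamma$ in the free homotopy class of $\gamma$ on $S_O(\omega)$, which exists by negative curvature and satisfies $\ell(\tilde\gamma)\leq\ell(\gamma)$ by the variational characterization of geodesics. By construction $\gamma'$ is the image of $\tilde\gamma$ under the canonical homotopy onto $\Gamma(\omega)$ described just before Proposition~\ref{prp_circuits}; this preserves the free homotopy class, so $\gamma'\simeq\tilde\gamma$. In particular $\gamma'$ is non-null-homotopic, and, interpreting $\ell$ of a curve on the graph as the length of the geodesic representative of its free homotopy class on $S_O(\omega)$, we get $\ell(\gamma')=\ell(\tilde\gamma)\leq\ell(\gamma)$.

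To extract $\gamma''$ I would rerun the argument of Proposition~\ref{prp_circuits}. Tracing $\gamma'$ as a walk on $\Gamma(\omega)$, each revisit to an already-seen vertex closes off a sub-circuit. Were every such sub-circuit null-homotopic, we could contract them all in turn and reduce $\gamma'$ to a subtree of $\Gamma(\omega)$, hence to a null-homotopic curve, contradicting the previous paragraph. I take $\gamma''$ to be any non-null-homotopic sub-circuit obtained this way.

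For the inequality $\ell(\gamma'')\leq\ell(\gamma')$ I pass to the universal cover $\mathbb{H}^2\to S_O(\omega)$. The geodesic $\tilde\gamma$ lifts to a segment of a hyperbolic geodesic from some $p\in\mathbb{H}^2$ to $g(p)$, where $g\in G$ is the covering transformation realizing $\gamma$, so $\ell(\gamma')=\ell(\tilde\gamma)=d(p,g(p))$ equals the translation length of $g$. The sub-circuit $\gamma''$ lifts to a sub-segment of this same lift from some point $q$ to $h(q)$ for an $h\in G$ realizing $\gamma''$. Since $\ell(\gamma'')$ is the translation length of $h$, namely the infimum of $d(r,h(r))$ over $r\in\mathbb{H}^2$, we obtain $\ell(\gamma'')\leq d(q,h(q))$, which equals the length of the sub-segment and is at most the length of the full segment, $\ell(\tilde\gamma)=\ell(\gamma')$.

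The hard part will be the final step: identifying the combinatorially chosen sub-circuit $\gamma''$ with a specific covering transformation $h\in G$, so that the sub-segment of the lift of $\tilde\gamma$ genuinely represents $\gamma''$ in $\pi_1(S_O(\omega))$. This amounts to matching sub-words of the $L$--$R$ word of $\gamma'$ with the corresponding compositions of the M\"obius transformations $g_L$ and $g_R$ described above, and then checking that the sub-segment indicated by such a sub-word has endpoints that differ by exactly the group element realizing $\gamma''$.
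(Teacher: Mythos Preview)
Your treatment of $\ell(\gamma')\leq\ell(\gamma)$ and the extraction of a non-null-homotopic sub-circuit $\gamma''$ is fine and matches the paper. The gap is in your argument for $\ell(\gamma'')\leq\ell(\gamma')$.

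You assert that ``the sub-circuit $\gamma''$ lifts to a sub-segment of this same lift from some point $q$ to $h(q)$'', meaning a sub-segment of the axis of $g$. This is not true. What is true is that the \emph{graph walk} $\gamma'$ lifts to a path $\tilde v_0,\tilde v_1,\ldots,\tilde v_n=g(\tilde v_0)$ along the lifted graph, and the contiguous sub-walk $\gamma''$ lifts to the sub-path from $\tilde v_i$ to $\tilde v_j=h(\tilde v_i)$, giving $\ell(\gamma'')\leq d(\tilde v_i,\tilde v_j)$. But $\tilde v_i,\tilde v_j$ lie on the graph, not on the axis of $g$, and there is no reason for any pair of points $q,h(q)$ to lie on that axis: $h$ does not preserve the axis of $g$. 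One might hope to take $q$ where the geodesic enters the triangle $\tilde T_i$ over $\tilde v_i$ and observe that $h(q)\in h(\tilde T_i)=\tilde T_j$, through which the geodesic also passes; but ideal triangles have infinite diameter, so $h(q)$ can be arbitrarily far from the geodesic. Thus the chain of inequalities you want, ending with ``$\leq$ length of the full segment $=\ell(\gamma')$'', cannot be closed this way. The difficulty you flag at the end is therefore not a bookkeeping matter of matching sub-words to group elements; the underlying geometric claim is false.

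The paper proves $\ell(\gamma'')\leq\ell(\gamma')$ by an entirely different, algebraic route. Using the formula $\ell=2\cosh^{-1}(\tr{w}/2)$, it suffices to show $\tr{w(\gamma'')}\leq\tr{w(\gamma')}$. Since $\gamma''\subset\gamma'$, the word $w(\gamma')$ is obtained from $w(\gamma'')$ by inserting letters from $\{L,R\}$; the paper then checks the elementary fact that for any word $w$ in $L,R$ (hence a matrix with non-negative entries) and any $x\in\{L,R\}$, inserting $x$ anywhere in $w$ adds a non-negative off-diagonal entry to the trace after a cyclic permutation, so $\tr{w'}\geq\tr{w}$. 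Induction on the number of inserted letters finishes the proof. This avoids the universal cover entirely for the key inequality.
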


\begin{proof}
Proposition \ref{prp_circuits} tells us that $\gamma'$ contains a homotopically non-trivial circuit $\gamma''$. We will prove that $\ell_{\mathbb{H}}(\gamma'')\leq\ell_{\mathbb{H}}(\gamma')$.

Because $\gamma''\subset\gamma'$, the word in $L$ and $R$ on $\gamma'$ can be obtained by inserting letters into the word on $\gamma''$. So, suppose the word in $L$ and $R$ on $\gamma''$ is $w=w_1w_2\cdots w_k$ and the word on $\gamma'$ is $v=v_1\cdots v_l$. Then  we have $k\leq l$ and there are $1\leq i_1<i_2<\ldots <i_k\leq l$ such that $w_j=v_{i_j}$ for all $1\leq j\leq k$. 

To prove that $\ell_{\mathbb{H}}(\gamma'')\leq \ell_{\mathbb{H}}(\gamma')$ we will prove that $\tr{w}\leq\tr{v}$ and to prove this we will prove that if we add letters to a word, the trace of the corresponding matrix increases (the rest will then follow by induction on the number of letters in the word).

So, let $w=w_1w_2\cdots w_n$ with $w_i\in\{L,R\}$ for $1\leq i\leq n$ and $w'=w_1\cdots w_i x w_{i+1}\cdots w_n$ with $x\in\{L,R\}$ and $1\leq i\leq n$. We have:
\begin{align*}
\tr{w}	& = \tr{w_1\cdots w_iw_{i+1}\cdots w_n} \\
		& = \tr{w_{i+1}\cdots w_nw_1\cdots w_i} 
\end{align*}
Likewise, we have:
\begin{equation*}
\tr{w'} = \tr{w_{i+1}\cdots w_nw_1\cdots w_ix} 
\end{equation*}
Write:
\begin{equation*}
w_{i+1}\cdots w_nw_1\cdots w_i=
\left(
\begin{array}{cc}
a_{11} & a_{12} \\
a_{21} & a_{22}
\end{array}
\right)
\end{equation*}
Because $w_i\in\{L,R\}$ for $1\leq i\leq n$ we have $a_{11},a_{12},a_{21},a_{22}\geq 0$. So:
\begin{equation*}
\tr{w} = a_{11}+a_{22}
\end{equation*}
and:
\begin{align*}
\tr{w'} & = 
\left\{
\begin{array}{ll}
a_{11}+a_{22}+a_{21} & \text{if }x=L \\
a_{11}+a_{22}+a_{12} & \text{if }x=R \\
\end{array}
\right. \\
& \geq \tr{w}
\end{align*}
\end{proof}

%%%%%%%%%%%%%%%%%%%%%%%%%%%%%%%%%%%%%%%%%%%%%%%%%
%		subsubsection: Compactifcation
%%%%%%%%%%%%%%%%%%%%%%%%%%%%%%%%%%%%%%%%%%%%%%%%%
\subsubsection{Compactifcation}

We can also conformally compactify the surfaces we obtain by adding points in the cusps. That is, if $\omega\in\Omega_N$  then there is a unique closed Riemann surface $S_C(\omega)$ with a set of points ${\{p_1,\ldots p_n\}\subset S_C(\omega)}$ such that:
\begin{equation*}
S_O(\omega) \simeq S_C(\omega) \backslash \{p_1,\ldots p_n\}
\end{equation*}
conformally.

It follows from a theorem from Bely\v{\i} \cite{Bel} that the set $\verz{S_C(\omega)}{\omega\in \bigcup\limits_{N=1}^\infty\Omega_N}$ is dense in the set of all Riemann Surfaces.

The problem is that the hyperbolic geometry of $S_C(\omega)$ is difficult to deduce from the partition $\omega$ (as opposed to the geometry of $S_O(\omega)$). However, a theorem from Brooks \cite{Bro} tells us that the geometries of $S_C(\omega)$ and $S_O(\omega)$ are close if the cusps of $S_C(\omega)$ are `big enough'. This idea is formalised by the following definition:
\begin{dff}
Let $\omega\in\Omega_N$, $n\in\mathbb{N}$ such that $S_O(\omega)$ has cusps $\rij{C_i}{i=1}{n}$ and $L\in (0,\infty)$. Then $S_O(\omega)$ is said to have cusp length $\geq L$ if there exists a set of horocycles $\rij{h_i}{i=1}{n}\subset S_O(\omega)$ such that:
\begin{itemize}[leftmargin=0.2in]
\item $h_i$ is a horocycle around $C_i$ for $1\leq i\leq n$.
\item $\ell_{\mathbb{H}} (h_i) \geq L$ for $1\leq i\leq n$.
\item $h_i\cap h_j=\emptyset$ for $1\leq i\neq j \leq n$.
\end{itemize}
\end{dff}

We also need some notation for disks. If $\omega\in\Omega_N$, $r\in (0,\infty)$ and $p\in S_C(\omega)$ then ${B_r(p)\subset S_C(\omega)}$ denotes the hyperbolic disk of radius $r$ around $p$. If $C$ is one of the cusps of $S_O(\omega)$ then ${B_r(C)\subset S_O(\omega)}$ denotes the neighborhood of $C$ bounded by the horocycle of length $r$ around $C$.

Now we can state the comparison theorem:

\begin{thm} \cite{Bro} For every $\varepsilon >0$ there exist $L\in (0,\infty)$ and $r\in (0,\infty)$ such that: If $\omega\in\Omega_N$ such that:
\begin{itemize}[leftmargin=0.2in]
\item $S_C(\omega)$ carries a hyperbolic metric $ds_{S_C(\omega)}^2$
\item $S_O(\omega)$ carries a hyperbolic metric $ds_{S_O(\omega)}^2$
\item $S_O(\omega)$ has cusps $\rij{C_i}{i=1}{n}$ and cusp length $\geq L$.
\end{itemize}
Then: outside $\bigcup\limits_{i=1}^n B_L(C_i)$ and $\bigcup\limits_{i=1}^n B_r(p_i)$ we have:
$$
\frac{1}{1+\varepsilon} ds_{S_O(\omega)}^2 \leq ds_{S_C(\omega)}^2 \leq (1+\varepsilon)ds_{S_O(\omega)}^2
$$
\end{thm}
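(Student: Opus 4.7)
My plan is to exploit the conformal identification $S_O(\omega) \simeq S_C(\omega)\setminus\{p_1,\ldots,p_n\}$: both hyperbolic metrics then live on the same underlying Riemann surface and differ only by a conformal factor, so I can write $ds^2_{S_C(\omega)} = e^{2f}\, ds^2_{S_O(\omega)}$ for a smooth function $f$ defined on $S_O(\omega)$. Because both metrics have constant curvature $-1$, the standard transformation rule for Gaussian curvature under a conformal change of metric gives the semi-linear elliptic equation $\Delta_{S_O(\omega)} f = e^{2f}-1$ on $S_O(\omega)$ (with the analysts' sign convention, for which the Laplacian at an interior maximum is non-positive). The theorem then reduces to showing that $|f|\leq \log(1+\varepsilon)$ off the excised neighborhoods.

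The first step is a local analysis in cusp/puncture coordinates. In horocyclic coordinates around a cusp $C_i$ of $S_O(\omega)$ the metric takes the form $(du^2+dv^2)/v^2$ with $u\sim u+L_i$ and $L_i\geq L$. The conformal map $z=\exp(2\pi i(u+iv)/L_i)$ sends this cusp to a punctured disk around $p_i$ in $S_C(\omega)$. Comparing the pull-back of the standard asymptotic hyperbolic metric $|dz|^2/(|z|\log|z|)^2$ near a puncture with the cuspidal form above, one can read off that $f = O(e^{-cL})$ on a large horocyclic circle for some universal $c>0$. This provides a bound on $f$ along the boundary of the excised region that tends to zero as $L\to\infty$.

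The second step is a global maximum principle argument for $\Delta f = e^{2f}-1$. The nonlinearity has the right sign: at any interior positive maximum one would need $\Delta f\leq 0$ yet $e^{2f}-1>0$, a contradiction. So any positive maximum must be attained on the boundary of the excised region, and the symmetric argument controls negative minima. Combined with the boundary estimate, this forces $f$ to be uniformly close to $0$ on the complement of $\bigcup_i(B_L(C_i)\cup B_r(p_i))$ as soon as $L$ is large enough, which is exactly the desired two-sided pinching of metrics.

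The hard part, I expect, is making the local analysis near the puncture $p_i$ uniform over all $\omega$. The hyperbolic metric on $S_C(\omega)$ near $p_i$ has an asymptotic expansion whose higher-order corrections a priori depend on the global geometry of $S_C(\omega)$, and since $\omega$ ranges over a moduli set whose geometry is not uniformly bounded, one needs a quantitative Schwarz-type lemma to control the deviation between the true hyperbolic metric on $S_C(\omega)$ near $p_i$ and the model cusp metric purely in terms of the horocyclic length $L_i$. This is the substantive estimate that underlies Brooks' theorem and that I would expect to occupy the bulk of a complete proof.
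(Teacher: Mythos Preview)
The paper does not give a proof of this theorem: it is quoted verbatim as a result of Brooks \cite{Bro} and used as a black box (together with Lemma~\ref{lem_brooks}) in the passage from the non-compact to the compact hyperbolic setting. So there is no ``paper's own proof'' to compare your proposal against.

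That said, your outline is in the right spirit---writing $ds^2_{S_C}=e^{2f}ds^2_{S_O}$, deriving the curvature equation $\Delta f=e^{2f}-1$, and appealing to the maximum principle is exactly the Ahlfors--Schwarz--lemma mechanism that underlies Brooks' result. Two points deserve care if you want to turn this into an actual proof. First, the situation is asymmetric: since $S_C$ is smooth at $p_i$ while $S_O$ has a cusp there, one has $f\to -\infty$ at each puncture. Hence the maximum principle on all of $S_O$ immediately gives $f\le 0$ globally (no excision needed for the upper bound), but for the lower bound you must excise cusp neighborhoods and control $f$ on their boundary horocycles; your write-up treats the two directions symmetrically, which obscures this. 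Second, the boundary estimate you assert, $f=O(e^{-cL})$ on a horocycle of length $L$, is not something one ``reads off'' from the model metrics: the model cusp metric $|dz|^2/(|z|\log|z|)^2$ is exactly $ds^2_{S_O}$ in those coordinates, not an approximation to $ds^2_{S_C}$, so comparing the two models tells you nothing about $f$. What is actually needed---and what you correctly flag as the hard part---is a uniform quantitative comparison between the true hyperbolic metric on $S_C(\omega)$ near $p_i$ and some model, with constants depending only on $L$ and not on $\omega$; this is the genuine content of Brooks' argument.
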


Because we are mainly interested in the compactified case we shall slightly adapt the definition of a systole. We will only consider curves on $S_O(\omega)$ to be homotopically non trivial if they are non trivial on $S_C(\omega)$ as well. It will turn out later that this does not make a difference for the asymptotics because the probability that there are short separating closed curves on $S_O$ will tend to $0$ if we let the number of triangles grow.

Furthermore, it follows from propositions \ref{prp_circuits} and \ref{prp_circuits2} that the systole, both in the compact and non-compact hyperbolic model, corresponds to a circuit.

Finally we will use the following lemma by Brooks:
\begin{lem}\label{lem_brooks}\cite{Bro} For $L\in (0,\infty)$ sufficiently large, there is a constant $\delta(L)$ with the following property: Let $\omega\in\Omega_N$ such that $S_O(\omega)$ has cusp length $\geq L$. Then for every geodesic $\gamma$ in $S_C(\omega)$ there is a geodesic $\gamma'$ in $S_O(\omega)$ such that the image of $\gamma'$ is homotopic to $\gamma$, and:
$$
\ell(\gamma) \leq \ell(\gamma') \leq (1+\delta(L))\ell(\gamma)
$$
Furthermore, $\delta(L)\rightarrow 0$ as $L\rightarrow\infty$.
\end{lem}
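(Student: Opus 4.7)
The plan is to combine the Schwarz--Pick lemma with Brooks' comparison theorem stated just above.

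Given a closed geodesic $\gamma$ on $S_C(\omega)$, its free homotopy class $[\gamma]\in\pi_1(S_C(\omega))$ has a preimage under the natural surjection $\pi_1(S_O(\omega))\twoheadrightarrow\pi_1(S_C(\omega))$, and any such preimage is non-peripheral because peripheral elements of $\pi_1(S_O(\omega))$ become trivial upon filling in the punctures. Realising a preimage as a closed $S_O$-geodesic produces $\gamma'$, which by construction is freely homotopic to $\gamma$ when viewed in $S_C(\omega)$.

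For the lower bound $\ell(\gamma)\le\ell(\gamma')$ I would apply the Schwarz--Pick lemma to the holomorphic inclusion $\iota:S_O(\omega)\hookrightarrow S_C(\omega)$: since $\iota$ is $1$-Lipschitz for the respective Poincar\'e metrics one has $\ell_{S_C}(\gamma')\le\ell_{S_O}(\gamma')=\ell(\gamma')$, and because $\gamma$ minimises length in its $S_C$-homotopy class, $\ell(\gamma)=\ell_{S_C}(\gamma)\le\ell_{S_C}(\gamma')\le\ell(\gamma')$.

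For the upper bound the cusp-length hypothesis enters. The collar lemma for cusps ensures that, once $L$ exceeds a universal constant, the non-peripheral $S_O$-geodesic $\gamma'$ stays outside every horocycle neighbourhood $B_L(C_i)$ and therefore lies in the common ``thick part'' to which the comparison theorem applies. I would then transport $\gamma$ into $S_O(\omega)$ via an arbitrarily small perturbation off the punctures, followed by rerouting any excursions into the small $S_C$-balls $B_r(p_i)$ along their bounding horocycles. The resulting curve $\widetilde\gamma\subset S_O(\omega)$ is freely $S_O$-homotopic to $\gamma'$, lies in the thick part, and the comparison theorem combined with the rerouting cost yields
\[
\ell(\gamma')\le\ell_{S_O}(\widetilde\gamma)\le\bigl(1+\delta(L)\bigr)\ell(\gamma),
\]
with $\delta(L)\to 0$ as $L\to\infty$, where the first inequality uses that $\gamma'$ minimises $S_O$-length in its homotopy class.

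The main obstacle is controlling the rerouting: one must show that pushing $\gamma$ off the punctures in $S_C(\omega)$ costs only a factor $1+o_L(1)$ in length, uniformly over all closed geodesics $\gamma$. The cleanest route is to invoke the explicit $(1+\varepsilon(L))$-bilipschitz model underlying Brooks' comparison theorem, which extends across the bad disks with controlled distortion so that $\gamma$ can be pushed across them directly; otherwise one must bound the number of excursions of $\gamma$ into each $B_r(p_i)$ by a collar-type estimate near the points $p_i$, which is more delicate.
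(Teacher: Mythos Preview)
The paper does not prove this lemma: it is quoted verbatim from Brooks \cite{Bro} and used as a black box, so there is no argument in the paper for you to compare against.

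As for your sketch itself: the lower bound via Schwarz--Pick applied to the holomorphic inclusion $S_O(\omega)\hookrightarrow S_C(\omega)$ is clean and correct, and is indeed the standard way to get $\ell(\gamma)\le\ell(\gamma')$ for \emph{any} non-peripheral lift. One small point: you first fix $\gamma'$ as ``a preimage'' and only later construct $\widetilde\gamma$, then assert they are $S_O$-freely homotopic; this is not automatic, since different preimages of $[\gamma]$ differ by peripheral elements. The fix is simply to \emph{define} $\gamma'$ as the $S_O$-geodesic representative of the class of your rerouted curve $\widetilde\gamma$.

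For the upper bound you have correctly identified the genuine content: one must show that pushing $\gamma$ off the filled-in points costs only a multiplicative $1+o_L(1)$. Your second suggested route (invoking the explicit bilipschitz model underlying Brooks' comparison theorem that extends across the bad disks) is essentially what Brooks does, so that is the right thing to cite rather than re-derive. The collar-type alternative you mention---bounding the number of excursions of $\gamma$ into each $B_r(p_i)$---does work but needs the observation that the injectivity radius of $S_C(\omega)$ at $p_i$ is bounded below in terms of $L$, so each excursion is a chord of length $\le 2r$ that can be rerouted along the boundary circle at cost $\le\pi r$; the total rerouting cost is then at most $\frac{\pi}{2}$ times the length $\gamma$ spends inside $\bigcup_i B_r(p_i)$, which is harmless once combined with the $(1+\varepsilon)$ metric comparison on the complement. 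Either way your outline is sound, but since the paper simply cites \cite{Bro} there is nothing further to match.
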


%%%%%%%%%%%%%%%%%%%%%%%%%%%%%%%%%%%%%%%%%%%%%%%%%
%		subsection: Geometry of the Riemannian model
%%%%%%%%%%%%%%%%%%%%%%%%%%%%%%%%%%%%%%%%%%%%%%%%%
\subsection{Geometry of the Riemannian model}\label{sec_GeomRiem}

The second model we will study is actually a collection of models. The idea is just to assume that we are given a fixed triangle with a metric on it. We will however make some assumptions on this metric. For the gluing we need the metric to be symmetric in a certain sense. Furthermore, because we need to apply Gromov's systolic inequality for surfaces at some point, we need the metric on the surface to be Riemannian up to a finite set of points, which means that we need to make some smoothness assumptions. One of these models is the model using equilateral Euclidean triangles that was studied in \cite{GPY}. The goal of this section is to explain the type of metrics we are talking about.

Since we will be gluing triangles, we can define all our metrics on the standard $2$-simplex given by:
\begin{equation*}
\Delta = \verz{t_1e_1+t_2e_2+t_3e_3}{(t_1,t_2,t_3)\in [0,1]^3,\; t_1+t_2+t_3=0}
\end{equation*}
where $\{e_1,e_2,e_3\}$ is the standard basis of $\mathbb{R}^3$. Note that this description of the triangle also gives us a natural midpoint of the triangle and natural midpoints of the sides. So, given a random surface made of these triangles, we get a natural embedding of the corresponding cubic ribbon graph.

We will assume that we have a metric $d:\Delta\times\Delta\rightarrow [0,\infty)$ that comes from a Riemannian metric $g$, given by four smooth functions on $\Delta$: $\rij{g_{ij}}{i,j=1}{2}$.

We will also assume some symmetry. Basically, we want that permuting the corners of the triangle is an isometry of the sides and that all the derivatives of the metric at the boundary of $\Delta$ in directions normal to the boundary vanish. We shall denote the symmetric group of $k!$ elements by $\mathcal{S}_k$. We have the following definition:
\begin{dff}
$(\Delta,g)$ will be called \emph{symmetric in the sides} if:
$$
g_{ij}(t e_i+(1-t)e_j) =g_{ij}(te_{\sigma(i)}+(1-t)e_{\sigma(j)})
$$
for all $\sigma\in \mathcal{S}_3$ (the symmetric group of order $6$), $t\in[0,1]$ and $i,j\in\{1,2\}$ and:
$$
\frac{\partial^k}{\partial n^k}\big|_x g_{ij} = 0
$$
for all $k\geq 1$, $i,j\in\{1,2\}$, $x\in\partial\Delta$ and $n$ normal to $\partial\Delta$ at $x$.
\end{dff}

These two symmetries are necessary to turn the `obvious' gluing into an isometry. It is clear that the sides have to be isometric. The fact that we want the metric to be symmetric under reflection in the midpoint of a side comes from the fact that when we glue two triangles along a side, we might glue them together with an opposite orientation on the two sides. The condition on the derivatives guarantees that the metric is not only continuous but also smooth. Finally, note that these conditions do not imply that $g$ has a central symmetry.

For estimates on the systole later on we want to define a rough minimal and maximal ratio between the length of a curve on the surface and the number of edges of its representive on the graph. To this end we have the following definition:

\begin{dff}
Let $d:\Delta\times\Delta\rightarrow [0,\infty)$ be a metric. We define:
$$
m_1(d) = \min\verz{d(s,s')}{s,s' \text{ opposite sides of a gluing of two copies of }(\Delta,d)\text{ along one side}}
$$
and:
$$
m_2(d) = \max\verz{d\left(\frac{e_i+e_j}{2},\frac{e_k+e_l}{2}\right)}{i,j,k,l\in\{1,2,3\},\; i\neq j,\; k\neq l}
$$
\end{dff}

As mentioned before, a special case of these is given by equilateral Euclidean triangles of side length $1$ as in \cite{GPY}. A simple Euclidean geometric argument shows that in this case we have $m_1(d)=1$ and $m_2(d)=\frac{1}{2}$.

\begin{obs} We also note that in the Riemannian setting the systole of a random surface does not necessarily correspond to a circuit on the graph. We will describe an example of this.  Let us consider the graph (with some loose half-edges attached) in the figure below:
\begin{figure}[H] 
\begin{center} 
\includegraphics[scale=1]{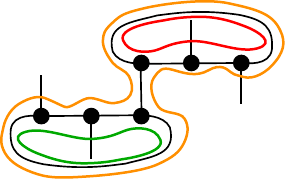} 
\caption{A graph with three cycles on it.}
\label{pic_exriemgraph1}
\end{center}
\end{figure}
The surface (with boundary corresponding to the the loose half-edges) coming from to this graph (taking the orientation from the orientation of the plane) is formed by two cylinders that share half of one of each of their boundary components, as in Figure \ref{pic_exriemcyl} below:
\begin{figure}[H] 
\begin{center} 
\includegraphics[scale=0.75]{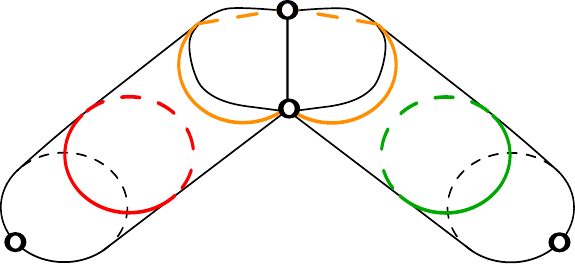} 
\caption{The surface corresponding to the graph from Figure \ref{pic_exriemgraph1}.}
\label{pic_exriemcyl}
\end{center}
\end{figure}

We will triangulate this surface in such a way that the orange cycle is shorter than the two (red and green) circuits. 
\begin{figure}[H] 
\begin{center} 
\includegraphics[scale=0.75]{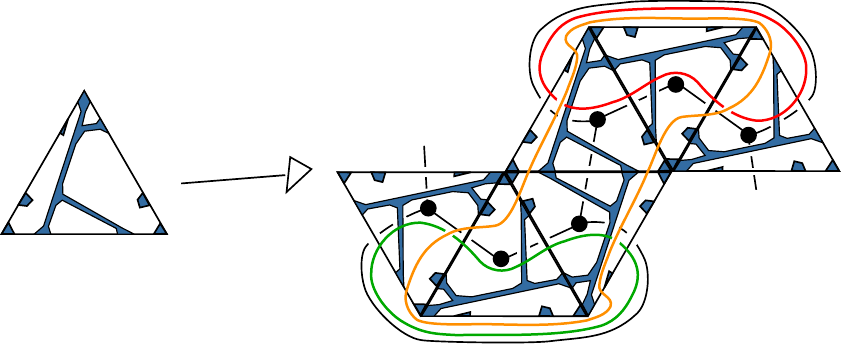} 
\caption{Triangulating the graph from Figure \ref{pic_exriemgraph1}.}
\label{pic_exriemtriang}
\end{center}
\end{figure}

The Riemannian metric we choose on the triangle is such that the white regions on the triangle are `cheap' and the dark blue regions `expensive'. This can be achieved by choosing a Euclidean metric on the whole triangle and multiplying it with a large factor in the blue parts (and then smoothing it). If this factor is large enough then the orange curve, which does not cross any of the blue parts, is shorter than the red curve and the green curve that both do cross the blue parts. Furthermore, the red and green curve cannot be homotoped to curves that do not cross the blue parts. Hence, on this surface with boundary, the shortest closed curve is not homotopic to a circuit. An example of a closed surface that has this property can be obtained by gluing two copies (circled) of the surface above along their boundary as follows:
\begin{figure}[H] 
\begin{center} 
\includegraphics[scale=1]{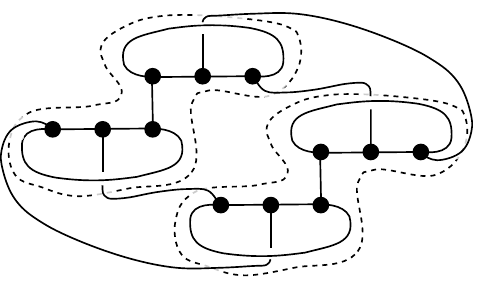} 
\caption{Two copies of the graph from Figure \ref{pic_exriemgraph1} glued together.}
\label{pic_exriemgraph2}
\end{center}
\end{figure}

This graph represents a surface of genus $3$. All the other circuit intersect the blue parts essentially, which means that the orange curve on the subsurface is still shorter than any circuit, even though it is not a circuit itself.

Another feature that this example illustrates is that in the Riemannian setting the central symmetry of the triangle is lost. That means that not only the cyclic orientation at the vertices of the corresponding graph matters, but actually which half-edge gets identified with which half-edge. Luckily, our probability space already encodes this information, so another way of looking at this is that there is less redundancy in the probability space in the Riemannian setting.

\end{obs}

%%%%%%%%%%%%%%%%%%%%%%%%%%%%%%%%%%%%%%%%%%%%%%%%%
%		S E C T I O N 2: Short separating curves
%%%%%%%%%%%%%%%%%%%%%%%%%%%%%%%%%%%%%%%%%%%%%%%%%
\section{Short separating circuits}\label{sect_sepcurv}

We have already noted that simple closed curves on the surface do not always correspond to circuits on the graph. However, in the Riemannian setting the shortest closed curve homotopic to a circuit will at least be an upper bound for the systole and in the hyperbolic model the the systole will even be homotopic to a circuit. Basically, the end goal is to show that we can ignore separating circuits in the computation of the limit of the expected value in both models. So we need to look at circuits on the graph that are separating on the surface when we see the graph embedded on the surface. Suppose $\gamma$ is such a separating circuit, then there are the following two options:
\begin{itemize}[leftmargin=0.2in]
\item $\gamma$ is homotopic to a corner shared by some number of triangles and corresponds to a left hand turn circuit on the corresponding graph.
\item $\gamma$ is separating on the graph as well.
\end{itemize}
Because in both models the corners of the triangles correspond to singularities on the surface, left hand turn circuits are always considered homotopically trivial. This means that we want to focus on the second case, so we can try to count the number of graphs that have short separating circuits. 

In this section we will forget about surfaces for a moment and consider separating circuits on graphs. Note however that we have not proved that a circuit that is separating on a graph is separating on the corresponding surface as well. In fact, this is not the case and figure \ref{pic29} gives a counter example:

\begin{figure}[H] 
\begin{center} 
\includegraphics[scale=0.7]{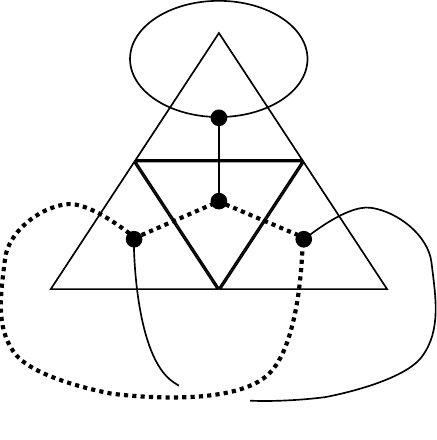} 
\caption{A circuit (dotted) that is separating on the graph but not on the corresponding surface.}
\label{pic29}
\end{center}
\end{figure}

This means that the probability that a random graph has short separating circuits is only an upper bound on the probability that a random graph has short circuits that are separating on the surface.

It also turns out that for the computations in this section it is easier to work with ordered partitions. In particular, this means that as a probability space we will use $\Omega^o_N$.

So in this section we are going to study the following set:
\begin{equation}
G_{N,k}^o=\verz{\omega\in\Omega^o_N}{\Gamma(\omega)\text{ has a separating circuit of length }k}
\end{equation}
where the length of a circuit in this case means the number of edges in that circuit. 

What we want is to count the number of elements in $G_{N,k}^o$ and study the asymptotics of this number. However, we will not count the cardinality of $G_{N,k}^o$ directly, this turns out to be too difficult. Instead we will count the number of graphs we obtain by cutting open the graphs in $G_{N,k}^o$ along a separating circuit of length $k$. This will give us an upper bound for the number of elements in this set.

The plan of the rest of this section is as follows: in section \ref{subs_1} we explain how this cutting along a separating circuit of length $k$ works and how this gives an upper bound for $\aant{G_{N,k}^o}$. After that we count the cardinality of the set of graphs that are cut open along a separating circuit of length $k$ in section \ref{subs_2}. Because the expression we obtain is rather difficult to handle, we turn it into an upper bound in section \ref{subs_3} and in section \ref{subs_4} we use that to prove that the probability of $G_{N,k}^o$ tends to $0$ for $k$ up to $C\log_2(N)$ for any $C\in (0,1)$.

%%%%%%%%%%%%%%%%%%%%%%%%%%%%%%%%%%%%%%%%%%%%%%%%%
%		subsubsection: Cutting along a separating circuit
%%%%%%%%%%%%%%%%%%%%%%%%%%%%%%%%%%%%%%%%%%%%%%%%%
\subsection{Cutting along a separating circuit}\label{subs_1}

If we cut a graph along a separating circuit of length $k$ (as shown in figure \ref{fig_cut} below), we obtain a graph with $k$ degree $1$ vertices and, because the circuit along which we cut is separating, more than one connected component. The degree $1$ vertices will also be spread over different connected components. 

\begin{figure}[H] 
\begin{center} 
\includegraphics[scale=0.5]{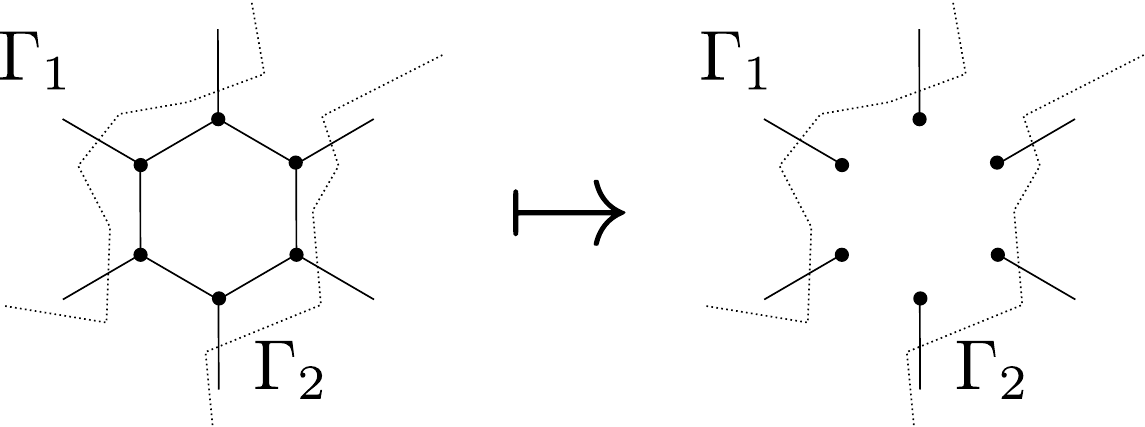} 
\caption{Cutting a graph along a separating circuit such that it splits into two connected components $\Gamma_1$ and $\Gamma_2$ each of which contain degree $1$ vertices.}
\label{fig_cut}
\end{center}
\end{figure}

We need some notation for the set of graphs, or rather to say partitions, we obtain by this procedure. First of all we need to look at more general sets of disjoint pairs out of $\{1,\ldots,6N\}$ than just partitions. We define:
\begin{equation*}
\Theta_N=\verz{\omega\subset\mathcal{P}(\{1,\ldots,6N\})}{\aant{p}=2\;\forall p\in\omega,\; p_1\cap p_2=\emptyset\; \forall p_1\neq p_2\in\omega}
\end{equation*}
where $\mathcal{P}(\{1,\ldots,6N\})$ denotes the power set of $\{1,\ldots,6N\}$. We will denote the corresponding set of ordered sequences of pairs with the same properties by $\Theta_N^o$. Furthermore, recall that for a partition $\omega\in\Omega_N^o$ the corresponding graph is denoted by $\Gamma(\omega)$. An element $\omega\in\Theta_N^o$ also naturally defines a graph which we will denote by $\Gamma(\omega)$ as well. Finally, we will denote the vertex set and edge set of a graph $\Gamma$ by $V(\Gamma)$ and $E(\Gamma)$ repsectively.

First we define the set of partitions that represent graphs with $2N$ vertices out of which $2N-k$ have degree $3$ and $k$ have degree $1$:
\begin{equation}
\Omega_{N,k}^o=\verz{\omega\in\Theta_N^o}{\begin{array}{l}\aant{\omega}=3N-k,\; \aant{V(\Gamma(\omega))}=2N,\\ \aant{\verz{v\in V(\Gamma(\omega))}{\deg(v)=3}}=2N-k,\\ \aant{\verz{v\in V(\Gamma(\omega))}{\deg(v)=1}}=k\end{array}}
\end{equation}

The reason we define this set only for an even number of vertices is that the set of partitions satisfying the properties above but corresponding to an odd number of vertices is empty. This follows from what is sometimes called the handshaking lemma: suppose $\Gamma$ is a graph with vertex set $V(\Gamma)$ consisting vertices of degree $1$ and $3$ only and edge set $E(\Gamma)$. Then the handshaking lemma tells us that:
\begin{align*}
 2\cdot\aant{E(\Gamma)} & = \aant{\verz{v\in V(\Gamma)}{\deg(v)=1}} + 3\cdot\aant{\verz{v\in V(\Gamma)}{\deg(v)=3}} \\
 & = \aant{V(\Gamma)}+2\cdot\aant{\verz{v\in V(\Gamma)}{\deg(v)=3}}
\end{align*}
which means that $\aant{V(\Gamma)}$ must be even.

The graph we obtain by forgetting the pairs that formed the separating circuit is an element the set:
\begin{equation}
D_{N,k}^o=\verz{\omega\in\Omega_{N,k}^o}{\begin{array}{l}\Gamma(\omega)\text{ not connected, }\Gamma(\omega)\text{ has more than one} \\ \text{component with degree }1\text{ vertices}\end{array}}
\end{equation}

Now we want an upper bound for the cardinality of $G_{N,k}^o$ in terms of that of $D_{N,k}^o$. This is given by the following lemma:

\begin{lem}\label{lem_GNkDNk}
$$
\aant{G_{N,k}^o}\leq 2^k (k-1)! \frac{(3N)!}{(3N-k)!} \aant{D_{N,k}^o}
$$
\end{lem}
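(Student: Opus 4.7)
The plan is to define a cutting map that sends each $\omega \in G_{N,k}^o$ to some $\omega' \in D_{N,k}^o$ and then to bound the number of preimages of each $\omega'$. For each $\omega \in G_{N,k}^o$, I will pick one separating circuit $C$ of length $k$ in $\Gamma(\omega)$ (in some canonical way, to make the map well-defined), and send $\omega$ to the ordered sub-partition $\omega'$ obtained by deleting from $\omega$ the $k$ pairs of half-edges corresponding to the edges of $C$, while preserving the order of the remaining $3N-k$ pairs. That $\omega' \in D_{N,k}^o$ is routine to verify: the $k$ vertices on $C$ had degree $3$ in $\Gamma(\omega)$ and each loses two incident edges when $C$ is cut, so they become degree-$1$ vertices, while the remaining $2N-k$ vertices are still degree-$3$; and since $C$ was separating, $\Gamma(\omega')$ splits into at least two components, with the former circuit vertices distributed on either side of $C$.

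To bound $|G_{N,k}^o|$ I will fix $\omega' \in D_{N,k}^o$ and count the number of $\omega$'s that the cutting map can send there. Reconstructing $\omega$ amounts to specifying (i) the $k$ new pairs of half-edges that form the reinserted circuit, and (ii) the positions in the length-$3N$ ordered sequence at which these $k$ pairs are inserted. For (i), a Hamiltonian cycle through the $k$ degree-$1$ vertices of $\Gamma(\omega')$ is encoded by a directed cyclic ordering of the vertices, of which there are $(k-1)!$, together with, at each vertex, a choice of which of the two free half-edges is paired with the successor, contributing a factor $2^k$; this over-counts undirected cycles by a factor of two, but is still a valid upper bound. For (ii), inserting $k$ distinguishable new pairs into an ordered sequence of $3N-k$ pairs so as to form an ordered sequence of $3N$ pairs gives $\binom{3N}{k}\cdot k! = (3N)!/(3N-k)!$ options. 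Multiplying the three factors with $|D_{N,k}^o|$ yields the desired bound.

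The main place to be careful is the counting in (i): since the probability space $\Omega_N^o$ records the actual pairing of labelled half-edges---not merely which vertices are joined by edges---the two free half-edges at each circuit vertex are genuinely distinguishable, and the choice of which is used by each incident cycle-edge really matters. This is what accounts for the factor $2^k$ in the bound, on top of the $(k-1)!$ factor counting the cyclic orderings of the vertices.
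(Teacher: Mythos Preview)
Your proof is correct and follows essentially the same approach as the paper: define a cutting map $f:G_{N,k}^o\to D_{N,k}^o$ by deleting the edges of a chosen separating $k$-circuit, then bound the fibres by counting the ways to reinsert a circuit through the $k$ degree-$1$ vertices together with the ways to interleave the $k$ new pairs into the ordered sequence. Your accounting of the $2^k(k-1)!$ factor (directed cyclic orderings times a binary half-edge choice at each vertex, with an acknowledged factor-of-two overcount) is in fact slightly more explicit than the paper's phrasing of ``$(k-1)!$ circuits with $2^k$ orientations,'' but the two arguments are the same in substance.
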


\begin{proof} To get to an upper bound of this form we need a map $f:G_{N,k}^o\rightarrow D_{N,k}^o$. Because then:
\begin{equation*}
\aant{G_{N,k}^o}\leq \max\verz{\aant{f^{-1}(\omega)}}{\omega\in D_{N,k}^o}\aant{D_{N,k}^o}
\end{equation*}
`Cutting along a separating circuit of length $k$' is a good candidate for such a map. However, this is not a well-defined map: if a graph in $G_{N,k}^o$ has more than one such circuit we have to choose which one to cut. So we fix such a choice and call it $f$.

Now we need to know how many graphs in $G_{N,k}^o$ can land on the same graph in $D_{N,k}^o$ under $f$. 

First of all, given $\omega\in G_{N,k}^o$ note that the degree $1$ vertices of $f(\omega)$ must correspond to the separating circuit that was cut open in $\omega$ by $f$, also if the original has another separating circuit of length $k$. This means that we can give an upper bound for $\max\verz{\aant{f^{-1}(\omega)}}{\omega\in D_{N,k}^o}$ by looking at how many circuits we can construct from the degree $1$ vertices in $D_{N,k}^o$ and then multiplying by a factor that accounts for the order of picking the pairs in the partition.

Given a graph in $D_{N,k}^o$ we can make $(k-1)!$ different circuits out of the $k$ vertices with degree $1$ with $2^k$ different orientations on these vertices. Furthermore, an element in $D_{N,k}^o$ consists of only $3N-k$ pairs of half-edges, while an element of $G_{N,k}^o$ consists of $3N$ pairs of half-edges. This means that, because we are making a distinction between the different orders of picking the pairs of half-edges, we get a factor of $\frac{(3N)!}{(3N-k)!}$. \end{proof}

%%%%%%%%%%%%%%%%%%%%%%%%%%%%%%%%%%%%%%%%%%%%%%%%%
%		subsubsection: The cardinality of D_{N,k}^o
%%%%%%%%%%%%%%%%%%%%%%%%%%%%%%%%%%%%%%%%%%%%%%%%%
\subsection{The cardinality of $D_{N,k}^o$}\label{subs_2}
What remains is to compute the cardinality of $D_{N,k}^o$. Let us first compute the number of graphs with $2N$ vertices, where $2N-k$ have degree $3$ and $k$ degree $1$ (so we drop the assumption of the graph having more than one connected component). This means that we want to know the cardinality of $\Omega_{N,k}^o$. We have the following lemma:
\begin{lem}
$$
\aant{\Omega_{N,k}^o}=6^k\binom{2N}{k}\frac{(6N-2k)!}{2^{3N}}
$$
\end{lem}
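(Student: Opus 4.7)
The plan is a direct counting argument, matching up every element $\omega\in\Omega_{N,k}^o$ with a triple of combinatorial choices.

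First I would note that an element of $\Omega_{N,k}^o$ is determined by, and determines, the following three pieces of data:
\begin{itemize}[leftmargin=0.2in]
\item[(i)] the subset $S\subseteq\{1,\ldots,2N\}$ of vertices that end up with degree $1$, with $\aant{S}=k$;
\item[(ii)] for each vertex $i\in S$, a choice of which one of its three half-edges $\{3i-2,3i-1,3i\}$ actually appears in some pair of $\omega$ (the other two half-edges at $i$ are absent from $\omega$);
\item[(iii)] an ordered sequence of $3N-k$ disjoint pairs that exhausts the remaining $6N-2k$ half-edges (the $3(2N-k)$ half-edges at the degree $3$ vertices together with the $k$ half-edges selected in (ii)).
\end{itemize}
Conversely any such triple assembles into a unique element of $\Omega_{N,k}^o$. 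So the count factors as the product of the three counts.

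Next I would count each factor. For (i) there are $\binom{2N}{k}$ choices. For (ii) each of the $k$ vertices in $S$ contributes a factor of $3$, giving $3^k$. For (iii) the standard count of ordered sequences of $m$ disjoint pairs partitioning a set of $2m$ labelled elements is $(2m)!/2^m$ (pick an ordering of the $2m$ elements, group them consecutively into $m$ pairs, and mod out by the internal order of each pair); applied with $2m=6N-2k$ this gives $(6N-2k)!/2^{3N-k}$.

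Multiplying and rewriting $3^k/2^{3N-k}=6^k/2^{3N}$ yields
\begin{equation*}
\aant{\Omega_{N,k}^o}=\binom{2N}{k}\cdot 3^k\cdot\frac{(6N-2k)!}{2^{3N-k}}=6^k\binom{2N}{k}\frac{(6N-2k)!}{2^{3N}},
\end{equation*}
as claimed. Since the argument is purely bookkeeping there is no real obstacle; the only subtlety worth flagging is the handshaking check that $6N-2k$ is indeed even and non-negative (ensuring $3N-k\geq 0$), which is consistent with the earlier remark that $\Omega_{N,k}^o$ is empty unless $2N$ and $k$ have the required parity relation built into the setup.
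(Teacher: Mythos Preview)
Your proof is correct and follows essentially the same approach as the paper: choose the $k$ degree $1$ vertices, choose one of the three half-edges at each, and then count ordered pairings of the remaining $6N-2k$ half-edges. The only cosmetic difference is that you frame the correspondence explicitly as a bijection with triples (i)--(iii), whereas the paper just describes the two multiplicative factors in words.
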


\begin{proof} Basically, the formula consists of two factors. The first one counts the number of ordered partitions into pairs of a set of $6N-2k$ labeled half-edges. The second factor comes from the fact that we have to choose which vertices will be degree $1$ and which half-edge of these vertices we include, so in principle this factor comes from the labeling of the half-edges.

The first factor is $\frac{(6N-2k)!}{2^{3N-k}}$, there are $k$ vertices with degree $1$, so in a graph there are $3N-k$ pairs of formed out of $6N-2k$ half-edges. There are $\frac{(6N-2k)!}{2^{3N-k}}$ ordered ways to choose these pairs.

Now we have to count the number of labeled sets of half-edges we can choose. First of all, we have to choose $k$ degree $1$ vertices, which gives a factor of $\binom{2N}{k}$. After that, we have to include $1$ out of $3$ half-edges per degree $1$ vertex, this gives a factor of $3^k$. So we get:
\begin{align*}
\aant{\Omega_{N,k}^o} & =3^k\binom{2N}{k}\frac{(6N-2k)!}{2^{3N-k}} \\
                      & =6^k\binom{2N}{k}\frac{(6N-2k)!}{2^{3N}}
\end{align*} 
\end{proof}

With this number, we can compute the cardinality of $D_{N,k}$.

\begin{lem} \label{lem_cardDNk}
$$
\aant{D_{N,k}^o}= \frac{1}{2} \sum\limits_{L=1}^{N-1}\sum\limits_{l=1}^{k-1}\binom{2N}{k}\binom{k}{l}\binom{2N-k}{2L-l}\binom{3N-k}{3L-l}\frac{6^k}{2^{3N}}(6L-2l)!(6(N-L)-2(k-l))!
$$
\end{lem}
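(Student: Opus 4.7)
The plan is to bound $|D_{N,k}^o|$ by enumerating ordered splittings $(\omega_1,\omega_2)$ of an element $\omega\in D_{N,k}^o$, where $\omega=\omega_1\sqcup\omega_2$ as sets of pairs and each $\omega_i$ corresponds to a non-empty union of connected components of $\Gamma(\omega)$ that carries at least one degree $1$ vertex. Because $\Gamma(\omega)$ is disconnected with at least two components containing degree $1$ vertices, every element of $D_{N,k}^o$ admits at least one such ordered splitting (in fact at least two, counting the $\omega_1\leftrightarrow\omega_2$ swap), so the total number of ordered splittings, divided by $2$, provides the upper bound on $|D_{N,k}^o|$ that the lemma asserts.

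I would parametrise the splittings by $(L,l)$, where the first piece has $2L$ vertices, $l$ of degree $1$ and $2L-l$ of degree $3$, and the second piece has the complementary $2(N-L)$ vertices with $k-l$ of degree $1$. Non-emptiness of both pieces together with the requirement that both carry a degree $1$ vertex forces $1\leq L\leq N-1$ and $1\leq l\leq k-1$. For fixed $(L,l)$ the counting proceeds exactly as in the derivation of $|\Omega_{N,k}^o|$, but split across the two pieces: a factor $\binom{2N}{k}$ selects which vertices are of degree $1$; $\binom{k}{l}\binom{2N-k}{2L-l}$ distributes the two kinds of vertices between the two pieces; $3^k$ accounts for the choice of the active half-edge at each degree $1$ vertex; the two factors $(6L-2l)!/2^{3L-l}$ and $(6(N-L)-2(k-l))!/2^{3(N-L)-(k-l)}$ give the ordered partitions of each side's half-edges into pairs; and finally $\binom{3N-k}{3L-l}$ interleaves the two ordered sequences of pairs into a single ordered partition of $3N-k$ pairs (recall that elements of $\Omega_{N,k}^o$ are \emph{ordered} sequences, which is the only reason this interleaving factor appears). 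Collecting powers via $3^k/2^{3N-k}=6^k/2^{3N}$ yields exactly the summand in the lemma.

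The main obstacle is keeping the overcounting under control: if $\Gamma(\omega)$ has strictly more than two components containing degree $1$ vertices, or has components consisting solely of degree $3$ vertices that may be assigned to either side, then the same $\omega$ admits several valid splittings and is counted more than once, so the displayed identity should be read as an upper bound on $|D_{N,k}^o|$ rather than an exact equality. This is nevertheless exactly what is needed downstream to bound $|G_{N,k}^o|$ via Lemma~\ref{lem_GNkDNk} and to deduce in the following subsections that the probability of $G_{N,k}^o$ vanishes. Beyond this subtlety, the proof reduces to routine bookkeeping to check that the factors of $2$ and $3$ combine correctly and that the ranges of summation indeed capture all admissible splittings.
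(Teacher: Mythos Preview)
Your argument is essentially the paper's: both split $\omega$ into two pieces parametrised by $(L,l)$, count each piece via the formula for $|\Omega_{L,l}^o|$, interleave the two ordered sequences via $\binom{3N-k}{3L-l}$, and divide by $2$ for the swap; your factorisation $\binom{2N}{k}\binom{k}{l}\binom{2N-k}{2L-l}$ is the paper's $\binom{2N}{2L}\binom{2L}{l}\binom{2N-2L}{k-l}$ rewritten via a standard identity. You are actually more careful than the paper on one point: the displayed expression is only an upper bound when $\Gamma(\omega)$ has more than two components, and the paper silently treats it as an equality---but as you correctly observe, the inequality is all that is used downstream.
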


\begin{proof} The idea behind the proof is that if $\omega\in D_{N,k}^o$, then we can write:
\begin{equation*}
\Gamma(\omega) = \Gamma(\omega_1)\sqcup\Gamma(\omega_2)
\end{equation*}
with $\omega_1\in\Omega_{L,l}^o$ and $\omega_2\in\Omega_{N-L,k-l}^o$ for some appropriate $L\in\{1,\ldots,N-1\}$ and $l\in\{1,\ldots, k\}$, as depicted in figure \ref{fig_cut}. So we can count the cardinality of $D_{N,k}^o$ by counting all the possible combinations of smaller components.

If the first component has $2L$ vertices out of which $l$ have degree $1$ and the second component has $2(N-L)$ vertices out of which $k-l$ have degree $1$, we have $\aant{\Omega_{L,l}^o}\aant{\Omega_{N-L,k-l}^o}$ possibilities for the graph. There are $\binom{2N}{2L}$ ways of choosing $2L$ out of $2N$ triangles and $\binom{3N-k}{3L-l}$ ways to re-order the picking of the pairs. Also, we are counting everything double: we are making an artificial distinction between the `first' and `second' component. So we get:
\begin{align*}
\aant{D_{N,k}^o} & =\frac{1}{2} \sum\limits_{L=1}^{N-1}\sum\limits_{l=1}^{k-1}\binom{2N}{2L}\binom{3N-k}{3L-l}\aant{\Omega_{L,l}^o}\aant{\Omega_{N-L,k-l}^o} \\
                 & = \frac{6^k}{2^{3N+1}} \sum\limits_{L=1}^{N-1}\sum\limits_{l=1}^{k-1}\binom{2N}{k}\binom{k}{l}\binom{2N-k}{2L-l}\binom{3N-k}{3L-l}(6L-2l)!(6(N-L)-2(k-l))! 
\end{align*}
\end{proof}

Note that there is some redundancy in the expression for $\aant{D_{N,k}^o}$, that is to say, for certain combinations of $L$ and $l$ we have that $\binom{2N-k}{2L-l}=0$. 
There are two cases where this happens. The first one is when $2L-l<0$, so when $L< \ceil{\frac{l}{2}}$. The second one is when $2N-k<2L-l$ so when $L>N-\ceil{\frac{1}{2}(k-l)}$. So we can also write:
\begin{equation*}
\aant{D_{N,k}^o}= \frac{6^k}{2^{3N+1}} \sum\limits_{l=1}^{k-1}\sum\limits_{L=\ceil{\frac{l}{2}}}^{N-\ceil{\frac{1}{2}(k-l)}}\binom{2N}{k}\binom{k}{l}\binom{2N-k}{2L-l}\binom{3N-k}{3L-l}(6L-2l)!(6(N-L)-2(k-l))!
\end{equation*}

Sometimes we will write $\Pro{}{D_{N,k}^o}=\frac{\aant{D_{N,k}^o}}{\aant{\Omega_N^o}}$, which is a slight abuse of notation, because technically it is not defined ($D_{N,k}^o$ is not a subset of $\Omega_N^o$). So, we get:
\begin{equation}\label{eq_PDNk}
\Pro{}{D_{N,k}^o}  = \frac{6^k}{2} \frac{\sum\limits_{l=1}^{k-1}\sum\limits_{L=\ceil{\frac{l}{2}}}^{N-\ceil{\frac{1}{2}(k-l)}}\binom{2N}{k}\binom{k}{l}\binom{2N-k}{2L-l}\binom{3N-k}{3L-l}\binom{6N-2k}{6L-2l}^{-1}}{6N(6N-1)\cdots(6N-2k+1)} 
\end{equation}

Note that $\ceil{\frac{l}{2}} \leq L \leq N-\ceil{\frac{1}{2}(k-l)}$ implies that $\binom{6N-2k}{6L-2l}\neq 0$, so the right hand side of the equation above is well defined.

%%%%%%%%%%%%%%%%%%%%%%%%%%%%%%%%%%%%%%%%%%%%%%%%%
%		subsubsection: An upper bound for $\Pro{}{D_{N,k}^o}$
%%%%%%%%%%%%%%%%%%%%%%%%%%%%%%%%%%%%%%%%%%%%%%%%%
\subsection{An upper bound for $\Pro{}{D_{N,k}^o}$}\label{subs_3}

Even though we have a closed expression for $\Pro{}{D_{N,k}^o}$, it does not immediately give much insight into how `big' the probability is when we let $N$ grow. The goal of this section is to obtain an upper bound for this probability, using the given expression. 

Because of the binomial coefficients in the expression we will work with Stirling's approximation. In particular we will use a result from \cite{Rob} that also gives us estimates for the error in the approximation. 

\begin{thm}\label{thm_rob} \cite{Rob}
Let $n\in\mathbb{N}$ and $n\neq 0$. Let $\lambda_n\in\mathbb{R}$ such that:
$$
n! = \sqrt{2\pi n}\left(\frac{n}{e}\right)^n e^{\lambda_n}
$$
then:
$$
\frac{1}{12n+1} \leq \lambda_n \leq \frac{1}{12n}
$$
\end{thm}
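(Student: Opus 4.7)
The plan is a telescoping argument for the sequence $\lambda_n$. Setting $\lambda_n := \log n! - \tfrac{1}{2}\log(2\pi n) - n\log n + n$, so that the defining equation holds tautologically, a direct calculation in which the polynomial-in-$\log n$ pieces collapse cleanly yields the one-step identity
$$
\lambda_n - \lambda_{n+1} \;=\; \bigl(n+\tfrac{1}{2}\bigr)\log\!\left(1+\tfrac{1}{n}\right) - 1.
$$
Expanding $\log(1+1/n)$ as a Taylor series and regrouping, the constant and $1/n$ contributions cancel, leaving the alternating tail
$$
\lambda_n - \lambda_{n+1} \;=\; \sum_{j=2}^{\infty} \frac{(-1)^{j}(j-1)}{2j(j+1)}\cdot\frac{1}{n^{j}} \;=\; \frac{1}{12 n^2} - \frac{1}{12 n^3} + \frac{3}{40\, n^4} - \cdots,
$$
which is strictly positive once one pairs consecutive terms. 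Hence $\lambda_n$ is strictly decreasing and bounded below, so it converges to a finite limit $\lambda_\infty$.

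The second step is to identify $\lambda_\infty = 0$. The cleanest route applies the Wallis product $\prod_{k\ge 1} \frac{(2k)^2}{(2k-1)(2k+1)} = \pi/2$ to the ratio $\binom{2n}{n}/4^n$; this pins down the prefactor $\sqrt{2\pi n}$ in classical Stirling and forces $\lambda_\infty = 0$. Combined with the monotonicity above, this provides the telescoping representation
$$
\lambda_n \;=\; \sum_{m=n}^{\infty} \left[\bigl(m+\tfrac{1}{2}\bigr)\log\!\left(1+\tfrac{1}{m}\right)-1\right],
$$
which reduces the double-sided bound to a pointwise question about a single summand.

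The final and hardest step is to establish the sharp pointwise envelope
$$
\frac{1}{12m+1} - \frac{1}{12m+13} \;\le\; \bigl(m+\tfrac{1}{2}\bigr)\log\!\left(1+\tfrac{1}{m}\right) - 1 \;\le\; \frac{1}{12m} - \frac{1}{12(m+1)},
$$
since summing each side over $m\ge n$ telescopes precisely to $\tfrac{1}{12n+1}\le \lambda_n \le \tfrac{1}{12n}$. Each of the two inequalities reduces to showing that a smooth auxiliary function of $m$, equivalently of $x=1/m\in(0,1]$, has a definite sign; the cleanest verification I know differentiates the auxiliary function enough times until what remains is manifestly of one sign by inspection of the Maclaurin coefficients of $\log(1+x)$. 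I expect this to be the main obstacle: the constants $12n$ and $12n+1$ are sharp precisely because the leading term of the series for $\lambda_n-\lambda_{n+1}$ is exactly $1/(12n^2)$, so the envelope must track this leading behaviour on the nose, and any cruder choice of telescoping sequence would fail to reproduce the correct $O(1/n)$ asymptotics of $\lambda_n$.
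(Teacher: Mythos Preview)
The paper does not prove this theorem; it is quoted from Robbins \cite{Rob} and used as a black box. Your outline is essentially Robbins' original argument (telescoping via $a_m:=(m+\tfrac12)\log(1+\tfrac1m)-1$, identification of the limit by Wallis, and the sharp envelope $\tfrac{1}{12m+1}-\tfrac{1}{12m+13}\le a_m\le \tfrac{1}{12m}-\tfrac{1}{12(m+1)}$), and it is correct.
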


This means that we can write:
\begin{equation*}
\binom{n}{k} = \sqrt{\frac{n}{2\pi (n-k)k}} \frac{n^n}{(n-k)^{n-k}k^k}e^{\lambda_n-\lambda_{n-k}-\lambda_k}
\end{equation*}
with the bounds on $\lambda_n$, $\lambda_{n-k}$ and $\lambda_k$ from the theorem above for $n,k\in\mathbb{N}$ such that $0<k<n$.

The `difficult part' of the expression for $\Pro{}{D_{N,k}^o}$ is the double sum over the product of binomial coefficients, so we will focus on finding an upper bound for that. For $2L-l>0$ and $2L-l<2N-k$ we will write:
\begin{equation*}
\frac{\binom{2N-k}{2L-l}\binom{3N-k}{3L-l}}{\binom{6N-2k}{6L-2l}} = F_1(N,k,L,l)F_2(N,k,L,l)F_3(N,k,L,l)
\end{equation*}
where:
\begin{equation*}
F_1(N,k,L,l) = \sqrt{\frac{2N-k}{\pi (2(N-L)-(k-l))(2L-l)}}
\end{equation*}

\begin{equation*}
F_2(N,k,L,l)  = \frac{(2N-k)^{2N-k}(6(N-L)-2(k-l))^{3(N-L)-(k-l)} (6L-2l)^{3L-l}}{(2(N-L)-(k-l))^{2(N-L)-(k-l)} (2L-l)^{2L-l}(6N-2k)^{3N-k}}
\end{equation*}
and:
\begin{equation*}
F_3(N,k,L,l) = \frac{\exp(\lambda_{2N-k}+\lambda_{2N-k}+\lambda_{6(N-L)-2(k-l)}+\lambda_{6L-2l})}{\exp(\lambda_{2(N-L)-(k-l)}+\lambda_{2L-l}+\lambda_{3(N-L)-(k-l)}+\lambda_{3L-l}+\lambda_{6N-2k})}
\end{equation*}

\noindent Note that we have to treat the cases where $2L-l=0$ and $2L-l=2N-k$ seperately, because there this way of writing the binomial coefficients does not work.

We want upper bounds for $F_1$, $F_2$ and $F_3$ for $N,k,L,l$ in the appropriate ranges. We will start with $F_1$ and $F_3$ because those are the two easiest expressions.

Because we're assuming that $2L-l>0$ and $2(N-L)-(k-l)>0$ and at least one of these two expressions must be greater than or equal to $\frac{1}{2}(2N-k)$ we get:
\begin{equation*}
F_1(N,k,L,l) \leq \frac{1}{\sqrt{2\pi}}
\end{equation*}

From the fact that $\frac{1}{12n+1} \leq \lambda_n \leq \frac{1}{12n}$ and the assumption that $2L-l>0$ and $2(N-L)-(k-l)>0$ and as a consequence also:
\begin{equation*}
3L-l>0,\;3(N-L)-(k-l)>0,\;6L-2l>0\text{ and }6(N-L)-2(k-l)>0
\end{equation*}
we get that:
\begin{equation*}
F_3(N,k,L,l) \leq e^{-\frac{2}{39}}
\end{equation*}

For both of these bounds we are not really interested in the exact constants, only the fact that such constants exist is important.

For $F_2$ we have the following lemma:
\begin{lem}\label{lem_convexity}
Let $N>0$, $k>0$ and $1\leq l\leq k-1$. Then the function
$$\Phi_{N,k,l}:\left[\ceil{\frac{l}{2}},N-\ceil{\frac{1}{2}(k-l)}\right]\rightarrow\mathbb{R}$$
defined by:
$$\Phi_{N,k,l}(x)=F_2(N,k,x,l)$$
for all $x\in \left[\ceil{\frac{l}{2}},N-\ceil{\frac{1}{2}(k-l)}\right]$ is a convex function.
\end{lem}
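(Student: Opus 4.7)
The plan is to compress $F_2$ into a simpler form, compute its second derivative via logarithmic differentiation, and reduce the problem to a single inequality.

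First I introduce the shorthand $P = 2L-l$, $Q = 2(N-L)-(k-l)$, $R = 3L-l$ and $S = 3(N-L)-(k-l)$. These are linear in $L$ with $P' = 2$, $Q' = -2$, $R' = 3$, $S' = -3$, and the sums $P+Q = 2N-k$ and $R+S = 3N-k$ are constant in $L$. The key observation is that the three factors $6L-2l$, $6(N-L)-2(k-l)$ and $6N-2k$ appearing in $F_2$ equal $2R$, $2S$ and $2(R+S)$ respectively, so that when the corresponding powers are expanded the factors of $2$ telescope (using $R+S = 3N-k$) into an $L$-independent prefactor. This leaves the clean form
\begin{equation*}
F_2(N,k,L,l) = \frac{(2N-k)^{2N-k}}{(3N-k)^{3N-k}} \cdot \frac{R^R S^S}{P^P Q^Q},
\end{equation*}
so $\ln F_2 = \mathrm{const}(N,k,l) + R\ln R + S\ln S - P\ln P - Q\ln Q$.

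Logarithmic differentiation using $\frac{d}{dL}(x\ln x) = x'(\ln x + 1)$ then gives, after the constants cancel,
\begin{equation*}
(\ln F_2)'(L) = 3\ln\frac{R}{S} + 2\ln\frac{Q}{P}, \qquad (\ln F_2)''(L) = \frac{9}{R} + \frac{9}{S} - \frac{4}{P} - \frac{4}{Q}.
\end{equation*}
Since $F_2 > 0$, the identity $F_2'' = F_2 \bigl[(\ln F_2)'' + ((\ln F_2)')^2\bigr]$ reduces the convexity of $F_2$ to verifying the single inequality
\begin{equation*}
\Bigl(3\ln(R/S) + 2\ln(Q/P)\Bigr)^2 + \frac{9}{R} + \frac{9}{S} \;\geq\; \frac{4}{P} + \frac{4}{Q}.
\end{equation*}

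The main obstacle will be establishing this inequality uniformly on $[\ceil{l/2},\, N-\ceil{(k-l)/2}]$. Away from the endpoints, the hypotheses $l\geq 0$ and $k-l\geq 0$ force $R \geq \tfrac{3}{2}P$ and $S \geq \tfrac{3}{2}Q$, which keeps $(\ln F_2)''$ close to nonnegative, so the squared first-derivative term acts as a mere safety net. The delicate case is near the lower endpoint $L=\ceil{l/2}$, where $P$ becomes small and $-4/P$ dominates $(\ln F_2)''$; there one exploits that $\ln(Q/P)$ blows up like $-\ln P$, so that $((\ln F_2)')^2$ is of order $(\ln P)^2$, and this has to be shown to compensate the $4/P$ term throughout the relevant range. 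A symmetric argument handles the upper endpoint via the $L\leftrightarrow N-L$, $l\leftrightarrow k-l$ symmetry of $F_2$. I expect the boundary analysis to be the crux of the proof.
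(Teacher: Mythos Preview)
Your simplification of $F_2$ and the reduction of convexity to the single inequality
\[
\Bigl(3\ln(R/S) + 2\ln(Q/P)\Bigr)^2 + \frac{9}{R} + \frac{9}{S} - \frac{4}{P} - \frac{4}{Q} \;\geq\; 0
\]
are both correct, and this second-derivative route is more systematic than the paper's. The gap is that the inequality is \emph{false} near the endpoints, so the boundary analysis you flag as ``the crux'' cannot be completed. Your heuristic is backwards: as $P\to 0^+$ the squared first derivative grows only like $4(\ln P)^2$, while the term $-4/P$ is of order $1/P$, and $1/P$ dominates $(\ln P)^2$; hence the left-hand side tends to $-\infty$. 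A concrete check: for $N=10$, $k=4$, $l=2$, $L=1.2$ one has $(P,Q,R,S)=(0.4,\,15.6,\,1.6,\,24.4)$, giving $(\ln F_2)'\approx -0.85$ and $(\ln F_2)''\approx -4.26$, so $F_2''<0$. The failure is not confined to even $l$ (where $P$ actually reaches~$0$): for $N=64$, $k=8$, $l=7$ at the left endpoint $L=4$ one has $(P,Q,R,S)=(1,\,119,\,5,\,179)$, giving $(\ln F_2)'\approx -1.18$ and $(\ln F_2)''\approx -2.18$, again $F_2''<0$. The lemma as stated is therefore false.

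The paper's own argument has the same blind spot from a different angle. It asserts without justification that the factor $3\log\frac{6x-2l}{6(N-x)-2(k-l)}+2\log\frac{2(N-x)-(k-l)}{2x-l}$ is monotone increasing; but that is precisely the claim $(\ln F_2)''\geq 0$, which the computations above refute. (Its qualitative picture that this factor ``turns from negative to positive'' is also wrong for even $l$: as $P\to 0^+$ it tends to $+\infty$, not $-\infty$.) What the application in Proposition~\ref{prp_uboundD} actually needs is only that $F_2$ attains its maximum on the \emph{sub}-interval $\bigl[\lceil l/2\rceil+1,\,N-\lceil(k-l)/2\rceil-1\bigr]$ at the endpoints; this weaker statement avoids the worst of the boundary behaviour and is what one should aim to prove instead.
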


\begin{proof} To prove this we will look at the derivative of $F_2(N,k,x,l)$ with respect to $x$ and show that it is monotonely increasing. To shorten the expressions a bit, we will only look at the factor in $F_2(N,k,x,l)$ that actually depends on $x$. So we set:
\begin{equation*}
G_2(N,k,x,l) = \frac{(6(N-x)-2(k-l))^{3(N-x)-(k-l)} (6x-2l)^{3x-l}}{(2(N-x)-(k-l))^{2(N-x)-(k-l)} (2x-l)^{2x-l}}
\end{equation*}
We can write:
\begin{equation*}
G_2(N,k,x,l) = \frac{\exp\left[(3(N-x)-(k-l))\log(6(N-x)-2(k-l))+(3x-l)\log(6x-2l)\right]}{\exp\left[(2(N-x)-(k-l))\log(2(N-x)-(k-l))+(2x-l)\log(2x-l)\right]}
\end{equation*}
So we get:
\begin{align*}
\frac{\partial}{\partial x}G_2(N,k,x,l) & = G_2(N,k,x,l)\left(3\log\left(\frac{6x-2l}{6(N-x)-2(k-l)}\right)+2\log\left(\frac{2(N-x)-(k-l)}{2x-l} \right) \right)
\end{align*}
Because $G_2(N,k,x,l)\geq 0$ we see that the derivative turns from negative (when $x$ is small compared to $N$) to positive. This means that $G_2(N,k,x,l)$ turns from monotonely decreasing to monotonely increasing. The second factor in the derivative is monotonely increasing and the point where it turns from negative to positive is exactly the point where $G_2(N,k,x,l)$ turns from decreasing to increasing. This means that the product of the two (the derivative) is still monotonely increasing, which concludes the proof. \end{proof}

The reason that this is interesting is that this implies that the maxima of $F_2(N,k,\cdot,l)$ on \linebreak $\left[\ceil{\frac{l}{2}},N-\ceil{\frac{1}{2}(k-l)}\right]$ lie on the edges of this interval. The same is true if we look for the maxima of $F_2(N,k,\cdot,l)$ on the subinterval $\left[\ceil{\frac{l}{2}}+1,N-\ceil{\frac{1}{2}(k-l)}-1\right]\subset\left[\ceil{\frac{l}{2}},N-\ceil{\frac{1}{2}(k-l)}\right]$. The lemma allows us to prove the following:

\begin{prp}\label{prp_uboundD}
There exists a $R\in (0,\infty)$ such that for all $N,k\in\mathbb{N}$ with $k\geq 2$ and $N\geq k^2$ we have:
$$
\Pro{}{D_{N,k}^o} \leq \frac{1}{N}\frac{R k^3}{k!} \frac{(3N-k)!}{(3N)!}
$$
\end{prp}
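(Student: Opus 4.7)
The plan is to start from the explicit formula \eqref{eq_PDNk} for $\Pro{}{D_{N,k}^o}$ and to bound the inner double sum over $l$ and $L$, combining the factorization of the binomial ratio as $F_1 F_2 F_3$ with the universal bounds $F_1 \le 1/\sqrt{2\pi}$ and $F_3 \le e^{-2/39}$ established above, and the convexity of $F_2$ from Lemma \ref{lem_convexity}.

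For each fixed $l \in \{1,\ldots,k-1\}$, I first separate from the sum over $L$ the (at most) two boundary indices, namely $L = \ceil{l/2}$ when $l$ is even (so that $2L - l = 0$) and $L = N - \ceil{(k-l)/2}$ when $k-l$ is even, because there the Stirling factorization does not apply. At these boundary values, the binomial ratio reduces to an elementary quotient of the form $\binom{3N-k}{l/2}\big/\binom{6N-2k}{l}$ or its mirror, and I control this directly by factorial manipulations, using $N \ge k^2$ to ensure uniformity. For $L$ in the strict interior, the factorization applies, and Lemma \ref{lem_convexity} guarantees that the maximum of $F_2(N,k,\cdot,l)$ on the strict-interior subinterval is attained at one of its two endpoints $L_{\pm}$. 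A direct substitution of $L_{\pm}$ into the defining formula for $F_2$, simplified using the standard limit $\bigl(m/(m-j)\bigr)^{m-j} \to e^{j}$ for $m \to \infty$ and $j$ fixed, produces an explicit bound on $F_2(N,k,L_\pm,l)$ of polynomial decay in $N$, with a constant depending only on $l$.

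Combining the endpoint estimate with the convexity bound $\sum_L F_2 \le N \cdot \max F_2$, and re-incorporating the factor $F_1 F_3 \le e^{-2/39}/\sqrt{2\pi}$, I obtain an upper bound on the interior contribution for each $l$. Weighting by $\binom{k}{l}$, summing over $l$, and multiplying by the outer prefactor $\frac{6^{k}}{2}\binom{2N}{k}\big/\prod_{j=0}^{2k-1}(6N-j)$ leaves an expression of the desired shape; applying Stirling's formula in the version of Theorem \ref{thm_rob} to compare the falling factorial $\prod_{j=0}^{2k-1}(6N-j)$ against $\prod_{j=0}^{k-1}(3N-j) = (3N)!/(3N-k)!$ then recasts the prefactor into exactly the form $\frac{(3N-k)!}{k!\,(3N)!}$, up to an absolute constant.

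The main obstacle is the precise bookkeeping: one has to combine the endpoint estimate of $F_2$, the sum over $l$ with weight $\binom{k}{l}$, the bounds on $F_1 F_3$, the boundary-term contributions, and the Stirling approximations of the prefactor and of $\binom{2N}{k}$, tracking every polynomial-in-$k$ and polynomial-in-$N$ factor so that they combine to exactly $k^{3}/N$ (rather than, say, $k^{4}/N$ or $k^{2}/\sqrt{N}$). The assumption $N \ge k^{2}$ is used throughout: it keeps the errors in Theorem \ref{thm_rob} uniformly bounded, ensures that the higher-$l$ contributions in the sum are dominated by the small-$l$ terms $l = 1$ and (by the symmetry $l \leftrightarrow k-l$) $l = k-1$, and allows the constant $R$ to be chosen independently of $N$ and $k$.
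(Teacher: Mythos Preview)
Your overall strategy matches the paper's proof: separate the extreme values of $L$, bound the remaining interior sum by $N\cdot\max_{L}F_2$ via the convexity in Lemma~\ref{lem_convexity}, evaluate $F_2$ at the inner endpoints using the $(1+x/n)^n\to e^x$ limit, then sum over $l$ with weight $\binom{k}{l}$ and simplify the prefactor. One technical point, though, is stated in a way that would not close the argument as written.

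You say the bound on $F_2(N,k,L_{\pm},l)$ has ``polynomial decay in $N$, with a constant depending only on $l$'' and that, using $N\ge k^{2}$, ``the higher-$l$ contributions in the sum are dominated by the small-$l$ terms $l=1$ and $l=k-1$''. Taken literally, a bound of the shape $C(l)/N^{p}$ with $p$ fixed gives, after weighting by $\binom{k}{l}$, a term of order $\binom{k}{l}C(l)/N^{p}$, which is \emph{largest} near $l\approx k/2$ and exponential in $k$; the $l=1$ term does not dominate. What actually comes out of the substitution $L=L_{\pm}$ is a decay whose \emph{power} of $N$ depends on $l$ (roughly $N^{-(l/2+1)}$). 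The paper then uses $6N-2k\ge k^{2}$ (this is where the hypothesis $N\ge k^{2}$ really enters, not merely to control Stirling errors) to trade all but two of those powers of $N$ for powers of $k$, arriving at a bound of the form
\[
F_{2}(N,k,L_{\pm},l)\ \le\ \frac{K'}{(6N-2k)^{2}\,\binom{k}{l}}.
\]
The same mechanism at the boundary values $L=\ceil{l/2}$ and $L=N-\ceil{(k-l)/2}$ (which the paper separates off for \emph{all} parities of $l$, not only when $2L-l=0$) yields a bound of order $k^{2}\big/\bigl(N\binom{k}{l}\bigr)$. The factor $1/\binom{k}{l}$ is the crucial point: it cancels the weight $\binom{k}{l}$ term by term, so the sum over $l$ contributes only a factor $k-1$, and together with the $k^{2}$ from the boundary estimate this gives exactly the $k^{3}$ in the statement. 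Once you insert this cancellation, the rest of your outline (including the elementary rewriting of the prefactor as $\frac{3}{2k!}\cdot\frac{2N}{6N-2k+1}\cdot 3^{k-1}\prod_{i}\frac{2N-i}{6N-2i+1}\cdot\frac{(3N-k)!}{(3N)!}$ and the observation that the middle product is at most $1$) goes through as in the paper.
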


\begin{proof} The proof consists of applying lemma \ref{lem_convexity} to the sum of binomial coefficients that appears in the expression we have for $\Pro{}{D_{N,k}^o}$. We have:
\begin{align}\label{eq_ubound}
\sum\limits_{L=\ceil{\frac{l}{2}}}^{N-\ceil{\frac{1}{2}(k-l)}}\frac{\binom{2N-k}{2L-l}\binom{3N-k}{3L-l}}{\binom{6N-2k}{6L-2l}} & \leq \frac{\binom{2N-k}{2\ceil{\frac{l}{2}}-l}\binom{3N-k}{3\ceil{\frac{l}{2}}-l}}{\binom{6N-2k}{6\ceil{\frac{l}{2}}-2l}}+\frac{\binom{2N-k}{2(N-\ceil{\frac{1}{2}(k-l)})-l}\binom{3N-k}{3(N-\ceil{\frac{1}{2}(k-l)})-l}}{\binom{6N-2k}{6(N-\ceil{\frac{1}{2}(k-l)})-2l}} \notag\\
 & \quad +N\frac{e^{-\frac{2}{39}}}{\sqrt{2\pi}}\max\left\{F_2(N,k,\ceil{\frac{l}{2}}+1,l),F_2(N,k,N-\ceil{\frac{1}{2}(k-l)}-1,l)\right\} 
\end{align}
So we need to study the four terms that appear on the right hand side above. We will only treat the two terms that correspond to $L=\ceil{\frac{l}{2}}$ and $L=\ceil{\frac{l}{2}}+1$. The analysis for the other two terms is analogous, the only difference is that $k-l$ takes over the role of $l$.

If $l$ is even we have:
\begin{align*}
\frac{\binom{2N-k}{2\ceil{\frac{l}{2}}-l}\binom{3N-k}{3\ceil{\frac{l}{2}}-l}}{\binom{6N-2k}{6\ceil{\frac{l}{2}}-2l}} & = \frac{\binom{3N-k}{\frac{l}{2}}}{\binom{6N-2k}{l}} \\
  & = \frac{l!}{\left(\frac{l}{2}\right)!(6N-2k-1)(6N-2k-3)\cdots (6N-2k-l+1)} \\
  & \leq \frac{l(l-1)\cdots (\frac{l}{2}+1)}{(6N-3k)^{\frac{l}{2}}} \\
  & \leq \frac{l(l-1)\cdots (\frac{l}{2}+1)}{(6N-3k) k^{l-1}} \\
  & \leq \frac{k\cdot l!}{(6N-3k) k(k-1)\cdots (k-l+1)} \\
  & = \frac{k}{(6N-3k) \binom{k}{l}}
\end{align*}
where we have used the assumption that $N\geq k^2$ and hence $6N-3k\geq k^2$.

Similarly, for odd $l$ we have:
\begin{equation*}
\frac{\binom{2N-k}{2\ceil{\frac{l}{2}}-l}\binom{3N-k}{3\ceil{\frac{l}{2}}-l}}{\binom{6N-2k}{6\ceil{\frac{l}{2}}-2l}} \leq \frac{(k+3)(k+2)}{(6N-3k)\binom{k}{l}}   
\end{equation*}

For the second term we have:
\begin{align*}
F_2(N,k,\ceil{\frac{l}{2}}+1,l) & = \frac{(2N-k)^{2N-k}}{(2(N-\ceil{\frac{l}{2}})-(k-l)-2)^{2(N-\ceil{\frac{l}{2}})-(k-l)-2}(2\ceil{\frac{l}{2}}-l+2)^{2\ceil{\frac{l}{2}}-l+2}} \\
   & \quad \cdot \frac{(6(N-\ceil{\frac{l}{2}})-2(k-l)-6)^{3(N-\ceil{\frac{l}{2}})-(k-l)-3}(6\ceil{\frac{l}{2}}-2l+6)^{3\ceil{\frac{l}{2}}-l+3}}{(6N-2k)^{3N-k}}
\end{align*}
So if $l$ is even we get:
\begin{equation*}
F_2(N,k,\ceil{\frac{l}{2}}+1,l)  = \frac{(2N-k)^2(2N-k)^{2N-k-2}(6N-2k-l-6)^{3N-k-\frac{1}{2}l-3}(l+6)^{\frac{l}{2}+3}}{(2N-k-2)^{2N-k-2}2^2(6N-2k)^{3N-k}}
\end{equation*}
For all $x\in\mathbb{R}$ we have $\lim\limits_{n\rightarrow\infty}\left(1+\frac{x}{n}\right)^n = e^x$. This means that for all $x\in\mathbb{R}$ there exists a constant $M_x\in(0,\infty)$ such that $\left(1+\frac{x}{n}\right)^n\leq M_x\;e^x$ for all $n\in\mathbb{N}$. So:
\begin{align*}
F_2(N,k,\ceil{\frac{l}{2}}+1,l) & \leq \frac{M_2\;e^2}{4} \frac{(2N-k)^2(6N-2k-l-6)^{3N-k-\frac{1}{2}l-3}(l+6)^{\frac{l}{2}+3}}{(6N-2k)^{3N-k}} \\
  & \leq \frac{M_2\;e^2}{4} \frac{(l+6)^{\frac{l}{2}+3}}{(6N-2k)^2 k^{l-2}} \\
\end{align*}
Using Theorem \ref{thm_rob} we see that there exists a constant $K\in (0,\infty)$ such that $(l+6)^{\frac{l}{2}+3}\leq K\cdot l!$. So there exists a constant $K'\in (0,\infty)$ such that for even $l$:
\begin{equation*}
F_2(N,k,\ceil{\frac{l}{2}}+1,l) \leq \frac{K'}{(6N-2k)^2\binom{k}{l}}
\end{equation*}
The proof for odd $l$ is analogous and as we have already mentioned, so are the corresponding proofs for $L=N-\ceil{\frac{1}{2}(k-l)}$. Hence there exists a constant $K''\in (0,\infty)$ such that:
\begin{equation*}
\max\left\{F_2(N,k,\ceil{\frac{l}{2}}+1,l),F_2(N,k,N-\ceil{\frac{1}{2}(k-l)}-1,l)\right\} \leq \frac{K''}{(6N-2k)^2\binom{k}{l}}
\end{equation*}
So if we fill in equation \ref{eq_ubound} we get:
\begin{equation*}
\sum\limits_{L=\ceil{\frac{l}{2}}}^{N-\ceil{\frac{1}{2}(k-l)}}\frac{\binom{2N-k}{2L-l}\binom{3N-k}{3L-l}}{\binom{6N-2k}{6L-2l}}  \leq \frac{k}{(6N-3k) \binom{k}{l}} + \frac{(k+3)(k+2)}{(6N-3k)\binom{k}{l}} +N\frac{e^{-\frac{2}{39}}}{\sqrt{2\pi}} \frac{K''}{(6N-2k)^2\binom{k}{l}}
\end{equation*}
So there exists a constant $B\in(0,\infty)$ such that:
\begin{equation*}
\sum\limits_{l=1}^{k-1}\binom{k}{l}\sum\limits_{L=\ceil{\frac{l}{2}}}^{N-\ceil{\frac{1}{2}(k-l)}}\frac{\binom{2N-k}{2L-l}\binom{3N-k}{3L-l}}{\binom{6N-2k}{6L-2l}}  \leq \frac{Bk^3}{N}
\end{equation*}
Now we can fill in this bound in the expression we have for $\Pro{}{D_{N,k}^o}$ in equation \ref{eq_PDNk} and we get:
\begin{align*}
\Pro{}{D_{N,k}^o} & \leq \frac{6^k}{2} \frac{\binom{2N}{k} Bk^3}{N\cdot 6N(6N-1)\cdots(6N-2k+1)}  \\ 
   & = \frac{ 3^k}{2\cdot k!} \frac{2N}{6N-2k+1} \left(\prod_{i=1}^k\frac{2N-i}{6N-2i+1}\right) \frac{Bk^3}{N\cdot 3N(3N-1)\cdots (3N-k+1)} \\
\end{align*}
We have $3^{k-1}\prod\limits_{i=1}^k\frac{2N-i}{6N-2i+1}=\prod\limits_{i=1}^k\frac{6N-3i}{6N-2i+1}\leq 1$, because every factor in the product is at most equal to $1$. Thus:
\begin{equation*}
\Pro{}{D_{N,k}^o} \leq \frac{ 3}{2\cdot k!} \frac{2N}{6N-2k+1} \frac{Bk^3}{N\cdot 3N(3N-1)\cdots (3N-k+1)} 
\end{equation*}
So there exists an $R\in (0,\infty)$ such that:
\begin{equation*}
\Pro{}{D_{N,k}^o} \leq \frac{1}{N}\frac{R k^3}{k! 3N(3N-1)\cdots (3N-k+1)}
\end{equation*}
which is what we wanted to prove.\end{proof} 

%%%%%%%%%%%%%%%%%%%%%%%%%%%%%%%%%%%%%%%%%%%%%%%%%
%		subsubsection: The limit
%%%%%%%%%%%%%%%%%%%%%%%%%%%%%%%%%%%%%%%%%%%%%%%%%
\subsection{The limit}\label{subs_4} Recall that $G_{N,k}^o\subset\Omega_{N,k}^o$ denotes the set of cubic graphs on $2N$ vertices that contain a separating circuit of $k$ edges. Using lemma \ref{lem_GNkDNk} and proposition \ref{prp_uboundD} we are now able to prove the following:

\begin{thmD}\label{thm_sepcurves}
Let $C\in (0,1)$. We have:
$$
\lim\limits_{N\rightarrow\infty} \Pro{}{\bigcup\limits_{2\leq k\leq C\log_2(N)}G_{N,k}^o} = 0
$$
\end{thmD}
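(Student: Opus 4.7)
The plan is to put together the two estimates that have just been proved, namely Lemma \ref{lem_GNkDNk} and Proposition \ref{prp_uboundD}, in order to bound $\Pro{}{G_{N,k}^o}$ by a quantity in which the factorials cancel cleanly, and then to take a union bound over $k$ in the range $2\leq k\leq C\log_2(N)$. The combination of these bounds is the whole game: Lemma \ref{lem_GNkDNk} contributes a factor growing like $2^k k! \cdot (3N)!/(3N-k)!$ coming from the reconstruction of a separating circuit out of the degree one vertices, while Proposition \ref{prp_uboundD} contributes exactly the inverse factors $1/k!$ and $(3N-k)!/(3N)!$ modulo a harmless polynomial in $k$ and a factor $1/N$.

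Explicitly, for $N\geq k^2$ and $k\geq 2$ the two inequalities combine to give
\begin{equation*}
\Pro{}{G_{N,k}^o} \;\leq\; 2^k(k-1)!\,\frac{(3N)!}{(3N-k)!}\cdot\frac{1}{N}\frac{Rk^3}{k!}\frac{(3N-k)!}{(3N)!} \;=\; \frac{R\,2^k k^2}{N}.
\end{equation*}
For $N$ large enough the hypothesis $N\geq k^2$ is satisfied for every $k\leq C\log_2(N)$, so the bound applies uniformly over the range we care about. A union bound then yields
\begin{equation*}
\Pro{}{\bigcup_{2\leq k\leq C\log_2(N)} G_{N,k}^o} \;\leq\; \sum_{k=2}^{\lfloor C\log_2(N)\rfloor} \frac{R\,2^k k^2}{N}.
\end{equation*}

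Finally, the sum on the right is dominated by its last term up to a constant, so it is $O\bigl((\log_2 N)^2\cdot 2^{C\log_2 N}\bigr) = O\bigl((\log_2 N)^2\cdot N^C\bigr)$, and dividing by $N$ gives an upper bound of order $(\log_2 N)^2/N^{1-C}$. Since $C<1$, this tends to $0$ as $N\to\infty$, which is exactly the desired conclusion. The main obstacle has already been overcome in Proposition \ref{prp_uboundD}; the remaining argument is just bookkeeping: the key point is that the factor $2^k$ coming from orienting the cut circuit together with $(k-1)!$ ways of pairing up the degree one vertices (i.e.\ the Lemma \ref{lem_GNkDNk} factor) is cancelled by the $1/k!$ and the $(3N-k)!/(3N)!$ in Proposition \ref{prp_uboundD}, leaving only the $2^k$ to fight against the factor $1/N$. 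That exponential growth in $k$ is what forces the restriction $C<1$ and explains why the logarithmic threshold $\log_2(N)$, rather than some larger function of $N$, is the natural one in this statement.
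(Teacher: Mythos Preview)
Your proof is correct and follows essentially the same route as the paper: combine Lemma~\ref{lem_GNkDNk} with Proposition~\ref{prp_uboundD} to get $\Pro{}{G_{N,k}^o}\leq R\,2^k k^2/N$, then take a union bound over $k\leq C\log_2(N)$ and observe that the resulting sum is of order $(\log N)^{O(1)} N^{C-1}\to 0$. You are slightly more careful than the paper in explicitly checking the hypothesis $N\geq k^2$ of Proposition~\ref{prp_uboundD} and in bounding the geometric-like sum by a constant times its last term (the paper simply multiplies the largest term by the number of terms, picking up an extra harmless $\log N$), but these are cosmetic differences.
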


\begin{proof} We have:
\begin{align*}
\Pro{}{\bigcup\limits_{2\leq k\leq C\log_2(N)}G_{N,k}^o} & \leq \sum\limits_{2\leq k\leq C\log_2(N)}\Pro{}{G_{N,k}^o} \\
   & \leq \sum\limits_{2\leq k\leq C\log_2(N)}2^k (k-1)! \frac{(3N)!}{(3N-k)!} \Pro{}{D_{N,k}^o}
\end{align*}
where we have used lemma \ref{lem_GNkDNk} in the last step. Proposition \ref{prp_uboundD} now tells us that there exists an $R\in (0,\infty)$ such that:
\begin{equation*}
\Pro{}{\bigcup\limits_{2\leq k\leq C\log_2(N)}G_{N,k}^o} \leq \sum\limits_{2\leq k\leq C\log_2(N)} 2^k \frac{Rk^2}{N}
\end{equation*}
The last term (corresponding to $k=\floor{C\log_2(N)}$) in the sum above is the biggest term, so we get:
\begin{align*}
\Pro{}{\bigcup\limits_{2\leq k\leq C\log_2(N)}G_{N,k}^o} & \leq \floor{C\log_2(N)}2^{\floor{C\log_2(N)}} \frac{R(\floor{C\log_2(N)})^2}{N} \\
   & \leq \frac{R(C\log_2(N))^3}{N^{1-C}} \\
\end{align*}
Because by assumption $C\in (0,1)$ we get:
\begin{equation*}
\lim\limits_{N\rightarrow\infty}\Pro{}{\bigcup\limits_{2\leq k\leq C\log_2(N)}G_{N,k}^o} = 0
\end{equation*}
which proves the statement. \end{proof}

%%%%%%%%%%%%%%%%%%%%%%%%%%%%%%%%%%%%%%%%%%%%%%%%%
%		S E C T I O N 4: Circuits on graphs
%%%%%%%%%%%%%%%%%%%%%%%%%%%%%%%%%%%%%%%%%%%%%%%%%
\section{Circuits on graphs}

The computation of the limit of the expected value of the systole in both models will rely heavily on the distribution of the number of circuits of a fixed length in a random graph. The asymptotic distribution of this number is known and we will recall it in the first part of this section. In the second part of this section we will prove two uniform upper bounds on this distribution. These bounds will be used in dominated convergence arguments later on.

%%%%%%%%%%%%%%%%%%%%%%%%%%%%%%%%%%%%%%%%%%%%%%%%%
%		subsubsection: Asymptotics
%%%%%%%%%%%%%%%%%%%%%%%%%%%%%%%%%%%%%%%%%%%%%%%%%
\subsection{Asymptotics}

The number of circuits of any fixed length converges to a Poisson distribution. Before we can state the theorem we need to specify the notion of convergence we are talking about. This is given by the following notion of convergence of $\mathbb{N}$-valued random variables:

\begin{dff}
Let $k\in\mathbb{N}$ and $X_{N,1},\ldots,X_{N,k}:\Omega_i\to \mathbb{N}$ be random variables for all $i\in\mathbb{N}$. We say that $X_{N,1},\ldots,X_{N,k}$ \emph{converge in distribution} to random variables $X_1,\ldots,X_k:\Omega\to\mathbb{N}$ as $N\to\infty$ if for all $n_1,\ldots,n_k \in\mathbb{N}$ we have:
$$
\lim_{N\to\infty}\Pro{}{X_{N,1}=n_1,\ldots, X_{N,k}=n_k} = \Pro{}{X_{1}=n_1,\ldots, X_{k}=n_k}
$$
\end{dff}

Because we will use it a lot, we also recall the definition of a Poisson distributed random variable:
\begin{dff}
Let $(\Omega,\Sigma,\mathbb{P})$ be a probability space. A random variable ${X:\Omega\rightarrow\mathbb{N}}$ is said to be \emph{Poisson distributed with mean $\lambda\in\mathbb{R}$} if:
$$
\Pro{}{X=k}=\frac{\lambda^{k}e^{-\lambda}}{k!}
$$
for all $k\in\mathbb{N}$.
\end{dff}

We also need to define a random variable that counts the number of $k$-circuits:
\begin{dff}
Define $X_{N,k}:\Omega_N\rightarrow\mathbb{N}$ by:
$$
X_{N,k}(\omega) = \aant{\verz{\gamma\text{ circuit on }\Gamma(\omega)}{\aant{\gamma}=k}}
$$
where $\aant{\gamma}$ denotes the number of edges in the circuit $\gamma$.
\end{dff}

We will need the following theorem by Bollob\'as:
\begin{thm}\label{thm_PoissonDist}
\cite{Bol1} As $N\to\infty$, the random variables $X_{N,1},\ldots,X_{N,k}$ converge in distribution to random variables $X_1,\ldots, X_k$, where:
\vspace{-0.2in}
\begin{itemize}[leftmargin=0.2in]
\item $X_i:\mathbb{N}\rightarrow\mathbb{N}$ is a Poisson distributed random variable with mean $\lambda_i=\frac{2^i}{2i}$ for $1\leq i\leq k$
\item The random variables $X_i$ and $X_j$ are independent for $1\leq i\neq j\leq k$
\end{itemize}
\end{thm}

%%%%%%%%%%%%%%%%%%%%%%%%%%%%%%%%%%%%%%%%%%%%%%%%%
%		subsubsection: Bounds on the probability distribution of $X_{N,k}$
%%%%%%%%%%%%%%%%%%%%%%%%%%%%%%%%%%%%%%%%%%%%%%%%%
\subsection{Bounds on the probability distribution of $X_{N,k}$}

In this section we will prove two upper bounds on the probability distribution of $X_{N,k}$. The first one will be an upper bound on $\Pro{}{X_{N,k}=0}$ for all $k\in\mathbb{N}$ and uniform in $N$, which will be given by proposition \ref{prp_ubPX1}. The second one will be an upper bound on $\Pro{}{X_{N,k}=i}$ for all $i\in\mathbb{N}$ for fixed $k$ and uniform in $N$ and will be given by lemma \ref{lem_ubPX2}.

The first of these two requires the most effort and will need some preparation. We start with the following lemma:

\begin{lem}\label{lem_kcirc}
Let $e$ be an edge in a cubic graph $\Gamma$. $e$ is part of at most $2^{\floor{\frac{k}{2}}}$ circuits of length $k$.
\end{lem}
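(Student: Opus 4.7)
The plan is to bound the number of length-$k$ circuits through $e = \{u,v\}$ by a walk-counting argument. Because $\Gamma$ is cubic, at $v$ there are exactly $2$ edges other than $e$, and at each subsequent vertex of a walk there are exactly $2$ edges other than the incoming one, so the number of walks of length $\lfloor k/2 \rfloor$ starting at $v$ whose first edge is not $e$ is at most $2^{\lfloor k/2 \rfloor}$.

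I would associate to each circuit $C$ of length $k$ through $e$ such a walk, by orienting $C$ so that $e$ is traversed from $u$ to $v$ and then taking the first $\lfloor k/2 \rfloor$ edges of $C$ after $e$. This gives a map $\phi$ from circuits through $e$ to walks of the described type. If $\phi$ is injective then the bound $2^{\lfloor k/2 \rfloor}$ follows immediately.

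The injectivity step is the main obstacle. Intuitively, since $\phi(C)$ visits $\lfloor k/2 \rfloor + 1 > k/2$ vertices of $C$, it already specifies more than half of $C$, and the remaining $\lceil k/2 \rceil - 1$ edges must form a path from the endpoint of $\phi(C)$ back to $u$. To make this rigorous, I would invoke the symmetric walk of length $\lceil k/2 \rceil - 1$ from $u$ obtained by going around $C$ in the opposite direction, observe that this walk together with $\phi(C)$ and the edge $e$ exhausts the edges of $C$, and then use the cubic structure (handling the cases $k$ even and $k$ odd separately, since in one case the two walks meet at a vertex and in the other they share a middle edge) to argue that $\phi(C)$ already forces the completion of $C$ back to $u$ to be unique.
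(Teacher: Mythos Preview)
Your approach is essentially the paper's, just phrased differently: the paper grows the two trees of non-backtracking walks of depth $\tfrac{k}{2}-1$ from the endpoints $v_1,v_2$ of $e$ and associates to each $k$-circuit its ``middle edge'', counted via the $2\cdot 2^{k/2-1}=2^{\lfloor k/2\rfloor}$ forward half-edges on the $v_1$-side. That data is exactly your walk $\phi(C)$ of length $\lfloor k/2\rfloor$ from $v$, and the bound $2^{\lfloor k/2\rfloor}$ is simply the number of such walks.

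Where you differ from the paper is in explicitly flagging injectivity as the obstacle, and here your sketch does not go through. In a cubic \emph{multigraph} (which the paper explicitly allows) the map $\phi$ need not be injective. For $k=4$: give $u$ a double edge to some vertex $w$, and let $v\to x\to w$ be a path using the two remaining half-edges at $v$ and $w$. Then the two parallel $w\text{--}u$ edges produce two distinct $4$-circuits through $e$ with the same $\phi(C)=(v\to x\to w)$. The cubic structure only tells you there are two forward half-edges at the endpoint of $\phi(C)$; it does not rule out both of them reaching $u$, so ``$\phi(C)$ forces the completion back to $u$ to be unique'' is false in general. (A smaller point: with your chosen lengths $\lfloor k/2\rfloor$ and $\lceil k/2\rceil-1$, the forward and backward walks meet at a single vertex in \emph{both} parities and never share an edge, so the even/odd dichotomy you describe is not the right one.) The paper's own argument is equally terse at this spot and implicitly relies on the picture in which the two trees embed disjointly; turning either version into a fully rigorous bound for arbitrary cubic multigraphs requires more careful bookkeeping than your injectivity claim provides.
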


\begin{proof} First we assume that $k$ is even. Suppose that $e=\{v_1,v_2\}$. We look at all the vertices at distance $\frac{k}{2}-1$ from $v_1$ and $v_2$ respectively (as in figure \ref{pic9} below).
\begin{figure}[H] 
\begin{center} 
\includegraphics[scale=1]{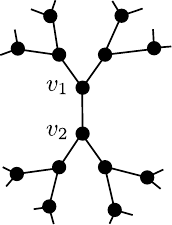} 
\caption{The vertices at distance $\leq 2$ from $v_1$ and $v_2$.}
\label{pic9}
\end{center}
\end{figure}
A circuit of length $k$ containing $e$ corresponds to an edge between a vertex at distance $\frac{k}{2}-1$ from $v_1$ and a vertex at distance $\frac{k}{2}-1$ from $v_2$. There are $2^{\frac{k}{2}-1}$ vertices at distance $\frac{k}{2}-1$ from $v_1$ and $2$ half-edges emanating from each of them. This means that in a given graph there can be at most $2\cdot2^{\frac{k}{2}-1}=2^\frac{k}{2}$ edges between these two sets of vertices and hence at most $2^\frac{k}{2}=2^{\floor{\frac{k}{2}}}$ length $k$ circuits passing through $e$.

If $k$ is odd a length $k$ circuit corresponds to an edge between a vertex at distance $\frac{k}{2}-\frac{1}{2}$ from $v_1$ and a vertex at distance $\frac{k}{2}-\frac{3}{2}$ from $v_2$. There can be at most $2^{\frac{k}{2}-\frac{1}{2}}=2^{\floor{\frac{k}{2}}}$ such edges in a given graph. \end{proof}

We will also need the following definition and theorem from \cite{Wor}:
\begin{dff}
Let $\omega,\omega'\in\Omega_N$. We say $\omega$ and $\omega'$ \emph{differ by a simple switching} when $\omega'$ can be obtained by taking two pairs $\{p_1,p_2\},\{p_3,p_4\}\in\omega$ and replacing them by $\{p_1,p_3\}$ and $\{p_2,p_4\}$.
\end{dff}
So on the level of graphs a simple switching looks like the picture below:
\begin{figure}[H] 
\begin{center} 
\includegraphics[scale=1]{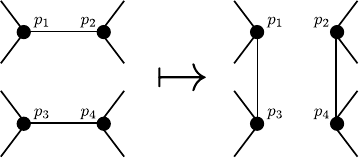} 
\caption{A simple switching.}
\label{pic7}
\end{center}
\end{figure}

The reason we are interested in simple switchings is the following theorem about the behavior of random variables that do not change too much after a simple switching:

\begin{thm}\label{thm_switch} \cite{Wor} Let $c\in (0,\infty)$. If $Z_N:\Omega_N\rightarrow \mathbb{R}$ is a random variable such that \linebreak $\abs{Z_N(\omega)-Z_N(\omega')}<c$ whenever $\omega$ and $\omega'$ differ by a simple switching then:
$$
\Pro{}{\abs{Z_N-\ExV{}{Z_N}}\geq t} \leq 2\exp\left(-\frac{t^2}{6Nc^2}\right)
$$
\end{thm}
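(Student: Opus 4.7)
The plan is to prove this concentration bound by building a Doob exposure martingale on $\Omega_N$ and applying the Azuma-Hoeffding inequality, which is tailor-made for the kind of Lipschitz-under-local-modification hypothesis we are given.

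First I would reveal the pairs of $\omega\in\Omega_N$ one at a time in a canonical order: at step $i$, let $a_i$ be the smallest element of $\{1,\ldots,6N\}$ not contained in any of the first $i-1$ revealed pairs, and take the $i$-th revealed pair to be the pair of $\omega$ containing $a_i$. Let $\mathcal{F}_i$ be the $\sigma$-algebra generated by these first $i$ pairs, and set $M_i=\ExV{}{Z_N\mid\mathcal{F}_i}$. Then $(M_i)_{i=0}^{3N}$ is a Doob martingale with $M_0=\ExV{}{Z_N}$ and $M_{3N}=Z_N$.

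The key step is to show $\abs{M_i-M_{i-1}}\leq c$. Conditional on $\mathcal{F}_{i-1}$, the $i$-th pair is uniformly distributed among the pairs $\{a_i,b\}$ where $b$ ranges over the unpaired elements, and the remainder of the partition is independently uniform given this choice. Given two candidates $\{a_i,b\}$ and $\{a_i,b'\}$ for the $i$-th pair, I would couple the two conditional measures by the following bijection: given $\omega$ whose $i$-th pair is $\{a_i,b\}$, let $\{b',c'\}$ be the (later-revealed) pair of $\omega$ containing $b'$, and define $\phi(\omega)$ by replacing the pairs $\{a_i,b\}$ and $\{b',c'\}$ with $\{a_i,b'\}$ and $\{b,c'\}$ and leaving every other pair unchanged. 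Since $a_i,b,b',c'$ are all distinct, $\phi(\omega)$ is a valid partition that differs from $\omega$ by a single simple switching, so by hypothesis $\abs{Z_N(\omega)-Z_N(\phi(\omega))}<c$. Averaging over the common uniform measure on the rest of the partition, any two possible values of $M_i$ given $\mathcal{F}_{i-1}$ differ by at most $c$, and therefore $\abs{M_i-M_{i-1}}\leq c$.

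Finally I would apply the Azuma-Hoeffding inequality to the martingale $(M_i)_{i=0}^{3N}$ with uniform bound $c$ on the differences, obtaining
$$
\Pro{}{\abs{Z_N-\ExV{}{Z_N}}\geq t} = \Pro{}{\abs{M_{3N}-M_0}\geq t} \leq 2\exp\left(-\frac{t^2}{2\cdot 3N\cdot c^2}\right) = 2\exp\left(-\frac{t^2}{6Nc^2}\right),
$$
which is exactly the claimed inequality. The main obstacle is the coupling in the middle paragraph: one must check carefully that the four elements $a_i,b,b',c'$ are genuinely distinct (which follows from $\{a_i,b\}$ and $\{b',c'\}$ being distinct pairs of $\omega$ with $b\neq b'$), so that $\phi$ produces a legitimate partition and a legitimate simple switching, and that $\phi$ does push the conditional measure on $\{i\text{-th pair}=\{a_i,b\}\}$ onto that on $\{i\text{-th pair}=\{a_i,b'\}\}$.
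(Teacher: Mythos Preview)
Your argument is correct and is precisely the standard proof of this switching-based concentration inequality: expose the pairs of the matching one at a time, couple any two choices of the next pair by a single simple switching to bound the martingale differences by $c$, and apply Azuma--Hoeffding over the $3N$ steps.

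There is nothing to compare against in the paper itself: the theorem is quoted from \cite{Wor} and not re-proved here. Your write-up is essentially the argument one finds in Wormald's survey (Theorem~2.19 there), so you have reconstructed the intended proof. One small remark on the coupling check you flag at the end: the map $\phi$ is a bijection from the set of partitions extending the first $i-1$ pairs and containing $\{a_i,b\}$ to those containing $\{a_i,b'\}$, but note that it need not respect the canonical exposure order beyond step $i$ (the pair $\{b,c'\}$ may be revealed at a different step in $\phi(\omega)$ than $\{b',c'\}$ was in $\omega$). This is harmless, since all you need is that the conditional law given $\mathcal{F}_i$ is uniform on extensions as \emph{unordered} partitions, which it is; but it is worth saying explicitly so the reader does not worry that the bijection must preserve the whole filtration.
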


In the proof of the upper bound we need to control the number of possible ways two circuits of fixed length can interesect. We will be interested in pairs of $k$-circuits that intersect in $i$ vertices such that the intersection has $j$ connected components. To avoid having to keep repeating this phrase, we have the following definition:
\begin{dff}
An \emph{$(i,j,k)$ double circuit} is a graph consisting of two $k$-circuits that intersect in $i$ vertices such that the intersection has $j$ connected components and such that every vertex of the graph has degree at most $3$.
\end{dff}
Figure \ref{pic30} gives an example:
\begin{figure}[H] 
\begin{center} 
\includegraphics[scale=1]{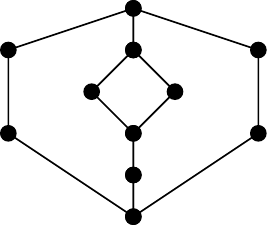} 
\caption{A $(5,2,8)$ double circuit.}
\label{pic30}
\end{center}
\end{figure}

Note that because the degree of the graph is not allowed to exceed $3$, a connected component of the intersection of an $(i,j,k)$ double circuit contains at least $2$ vertices and $1$ edge. This means that we can assume that $j\leq\ceil{\frac{i}{2}}$.

We have the following lemma about this type of graphs:

\begin{lem}\label{lem_doublecirc}
Let $i,j,k\in\mathbb{N}$ such that $1\leq i\leq k$ and $j\leq \floor{\frac{i}{2}}$ then there are at most 
$$2^{j-1}(j-1)!\binom{i-j-1}{j-1}\binom{k-i+j-1}{j-1}^2 $$
isomorphism classes of $(i,j,k)$ double circuits.
\end{lem}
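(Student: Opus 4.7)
The plan is to define a canonical tuple of combinatorial data associated to any $(i,j,k)$ double circuit so that the total number of admissible tuples equals exactly the claimed bound; since every isomorphism class admits at least one tuple, this will give the upper bound by cardinality.

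First I would note that the degree constraint forces every connected component of $C_1\cap C_2$ to have at least two vertices (a single shared vertex would have degree $\geq 4$), so each intersection component is a path with $\geq 2$ vertices. Hence the lengths $(i_1,\ldots,i_j)$ range over compositions of $i$ into $j$ parts each $\geq 2$, contributing the factor $\binom{i-j-1}{j-1}$. Next, fixing a direction of traversal of $C_1$, I would label the intersection paths $P_1,\ldots,P_j$ in the order $C_1$ first enters them; the $j$ arcs of $C_1$ between consecutive paths have non-negative internal-vertex counts summing to $k-i$, contributing $\binom{k-i+j-1}{j-1}$. Since $C_2$ visits each $P_s$ exactly once, after rotating its cyclic sequence so that $P_1$ appears first, the order is determined by a permutation $\sigma$ of $\{2,\ldots,j\}$, giving $(j-1)!$; the arc lengths of $C_2$ contribute another $\binom{k-i+j-1}{j-1}$. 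Finally, for each $s$, a sign $\epsilon_s\in\{\pm 1\}$ records whether $C_2$ traverses $P_s$ in the same or opposite direction as $C_1$.

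The main subtlety, and where the factor $2^{j-1}$ rather than $2^j$ comes from, is the normalization of these signs. The direction of $C_2$'s traversal is not intrinsic to the iso class, and reversing it produces another valid description in which $\sigma$ is replaced by its reverse, the arc-length sequence of $C_2$ is reversed, and every $\epsilon_s$ is flipped. In particular the two orientations of $C_2$ yield opposite values of $\epsilon_1$, so each iso class admits a description with $\epsilon_1=+1$, leaving only $2^{j-1}$ choices for $(\epsilon_2,\ldots,\epsilon_j)$. Multiplying the factors gives
\[
2^{j-1}(j-1)!\binom{i-j-1}{j-1}\binom{k-i+j-1}{j-1}^2
\]
as the total number of admissible tuples, and since the construction produces at least one tuple per iso class the bound follows.

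The main point to check carefully is that the normalization $\epsilon_1=+1$ really is always achievable without losing any iso class, i.e.\ that the reversed-$C_2$ description is itself a valid tuple satisfying the stated constraints and has the opposite value of $\epsilon_1$. Once this is verified, the further symmetries of a double circuit (reversing $C_1$, swapping $C_1\leftrightarrow C_2$, choosing a different starting point on $C_1$) only cause additional over-counting of tuples and therefore can safely be ignored for the purposes of the upper bound.
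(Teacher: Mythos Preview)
Your argument is correct and is essentially the same as the paper's: both decompose an $(i,j,k)$ double circuit into its $j$ intersection paths and the $2j$ complementary arcs of $C_1$ and $C_2$, obtain $\binom{i-j-1}{j-1}\binom{k-i+j-1}{j-1}^2$ from the length data, and get the factor $2^{j-1}(j-1)!$ from the relative ordering and orientations of the intersection paths along $C_2$. Your normalization $\epsilon_1=+1$ via reversal of $C_2$ corresponds exactly to the paper's remark that, when starting the second gluing at $P_1$, ``it does not matter which edge we choose''; the remaining $(j-1)!\cdot 2^{j-1}$ choices are then counted identically.
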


\begin{proof} To prove this we will deconstruct the graphs we are considering into building blocks. We will count how many possible building blocks there are and in how many ways we can put these blocks together.

Every $(i,j,k)$ double circuit can be constructed from the following building blocks:
\begin{itemize}[leftmargin=0.2in]
\item $j$ connected components of the intersection of lengths $l_1,l_2,\ldots,l_j$. Where the $n^{\text{th}}$ connected component consists of a line of $l_n$ vertices with two edges emanating from each end of the line.
\item $j$ segments of the first circuit of lengths $l_{j+1},l_{j+2},\ldots,l_{2j}$. 
\item $j$ segments of the second circuit of lengths $l_{2j+1},l_{2j+2},\ldots,l_{3j}$. 
\end{itemize}
such that:
\begin{equation}\label{eq_cond1}
\sum\limits_{n=1}^j l_n = i
\end{equation}
\begin{equation}\label{eq_cond2}
\sum\limits_{n=k+11}^{2j} l_n = k-i
\end{equation}
\begin{equation}\label{eq_cond3}
\sum\limits_{n=2j+1}^{3j} l_n = k-i
\end{equation}
and:
\begin{equation}\label{eq_cond4}
l_i \geq \left\{
\begin{array}{ll}
2 & \text{if } 1\leq i\leq j \\
0 & \text{otherwise}
\end{array}
\right.
\end{equation}
Figure \ref{pic13} depicts these building blocks:
\begin{figure}[H] 
\begin{center} 
\includegraphics[scale=1]{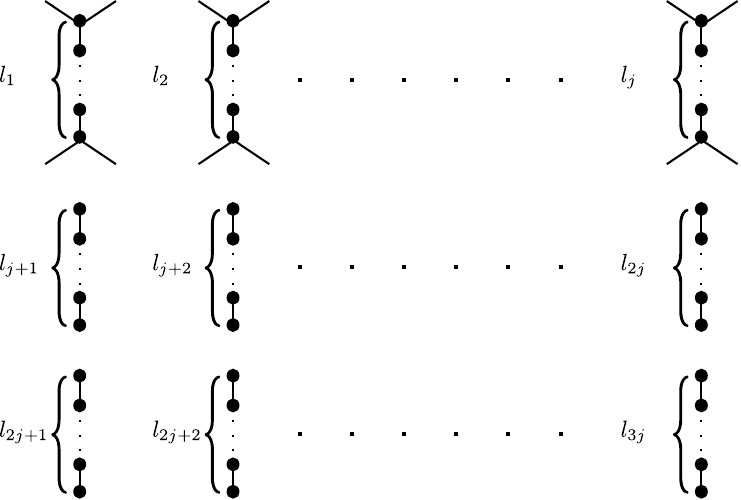} 
\caption{Building blocks for an $(i,j,k)$ double circuit.}
\label{pic13}
\end{center}
\end{figure}
\noindent We can construct an $(i,j,k)$ double circuit out of these blocks in the following way:
\begin{itemize}[leftmargin=0.2in]
\item we start by connecting the $j$ `intersection' blocks by joining them with the first $j$ `line blocks' (we use one loose edge on either side of each intersection block). So, between every pair of intersection blocks there needs to be a line block. From this construction we obtain one circuit with loose edges. The conditions on the lengths $l_i$ above guarantee that we get a $k$-circuit.
\item after this we make the second circuit with the remaining line blocks and `open edges' of the intersection blocks, again such that the blocks alternate.
\end{itemize}
Every $(i,j,k)$ double circuit can be constructed like this. This means that the number of ways this construction can be carried out times the number of sequences $(l_1,l_2,\ldots,l_{3j})$ satisfying conditions (\ref{eq_cond1}), (\ref{eq_cond2}), (\ref{eq_cond3}) and (\ref{eq_cond4}) gives an upper bound for the number of isomorphism classes of $(i,j,k)$ double circuits. 

We start with the factor accounting for the number of sequences $(l_1,l_2,\ldots,l_{3j})$. This factor is:
\begin{equation}\label{eq_binoms}
\binom{i-j-1}{j-1}\binom{k-i+j-1}{j-1}^2
\end{equation}
To prove this, we use the fact that the number of positive integer solutions to the equation:
\begin{equation*}
a_1+a_2+\ldots +a_m=n
\end{equation*}
is equal to:
\begin{equation*}
\binom{n-1}{m-1}
\end{equation*}
(see for instance \cite{Sta}). To get the first binomial coefficient we note that $a_1+a_2+\ldots +a_m=n-m$ if and only if:
\begin{equation*}
(a_1+1)+(a_2+1)+\ldots +(a_m+1)=n
\end{equation*}
So the number of integer solutions to:
\begin{equation*}
b_1+b_2+\ldots +b_m=n
\end{equation*}
with $b_i\geq 2$ for $1\leq i\leq m$ is equal to the number of positive integer solutions to:
\begin{equation*}
a_1+a_2+\ldots +a_m=n-m
\end{equation*}
which is:
\begin{equation*}
\binom{n-m-1}{m-1}.
\end{equation*}
This gives us the first binomial coefficient in (\ref{eq_binoms}). The same trick works for the second binomial coefficient: because:
\begin{equation*}
a_1+a_2+\ldots +a_m=n+m
\end{equation*}
if and only if:
\begin{equation*}
(a_1-1)+(a_2-1)+\ldots +(a_m-1)=n
\end{equation*}
the number of integer solutions to :
\begin{equation*}
c_1+c_2+\ldots +c_m=n
\end{equation*}
with $c_i\geq 0$ for $1\leq i\leq m$ is:
\begin{equation*}
\binom{n+m-1}{m-1},
\end{equation*}
which explains the quadratic part in (\ref{eq_binoms}).

The factor accounting for the number of gluings is equal to:
\begin{equation*}
2^{j-1}(j-1)!
\end{equation*}

We get to this number by using the fact the gluing is determined by two things: we can choose the order of the blocks in each circuit (as long as the alternation of intersections and lines is maintained) and we can choose the orientation of the intersection blocks in each circuit (i.e. to which of the two ends of the block we glue the line). 

We start with first circuit that we glue. When we reorder the blocks in this circuit, the only thing we do is interchanging the lengths $l_1,l_2,\ldots l_{2n}$. This has already been accounted for in the factor for the lengths. Also the choice in orientation of the intersection blocks does not matter for the isomorphism class of the circuit with loose edges we obtain at the end. So for the number of constructions of the first circuit is $1$.

However, for the second circuit the order and orientation of the intersection blocks do matter for the isomorphism class of the $(i,j,k)$ double circuit we obtain (one could say that only the relative order and orientation of the intersections matter). Figure \ref{pic14} below illustrates how the change of orientation can affect the ismorphism class:
\begin{figure}[H] 
\begin{center} 
\includegraphics[scale=1]{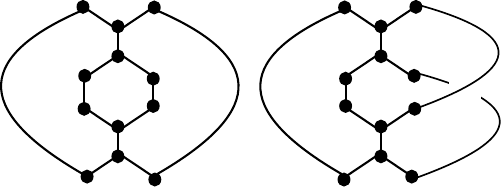} 
\caption{Two non-isomorphic $(4,2,8)$ double circuits.}
\label{pic14}
\end{center}
\end{figure}
The order of the line blocks in the second circuit again corresponds to a changing of lengths that has already been accounted for.

We count the orders as follows: first of all, we note that because the order of the blocks in the first circuit does not matter we can choose it. We choose an order that cyclically corresponds to $l_1,l_{j+1},l_2,l_{j+2},\ldots,l_j,l_{2j}$. To construct the second circuit we start with one of the loose edges of the intersection block corresponding to $l_1$ (it does not matter which edge we choose). We glue the first line block to it and for the other end of the line block we have a choice of $j-1$ intersection blocks and $2$ loose edges per intersection block to glue it to. After we have chosen the intersection block and the edge we glue it to, we glue the second line block to the other loose edge of this intersection block and repeat the process. Like this we pick up a factor of:
\begin{equation*}
2^{j-1}(j-1)!
\end{equation*}

Combining this with the formula for the number of lengths we see that the number of $(i,j,k)$ double circuits is at most:
\begin{equation*}
2^{j-1}(j-1)!\binom{i-j-1}{j-1}\binom{k-i+j-1}{j-1}^2
\end{equation*}
\end{proof}

Now we are ready to prove the following upper bound:

\begin{prp}\label{prp_ubPX1}
There exists a $D\in (0,\infty)$ such that:
$$
\Pro{}{X_{N,k}=0} \leq Dk^8 \left(\frac{3}{8}\right)^k
$$
for all $N\in\mathbb{N}$ with $2N\geq k$.
\end{prp}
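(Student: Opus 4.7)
My plan is to apply the switching concentration Theorem~\ref{thm_switch} to $X_{N,k}$ together with a lower bound on $\mathbb{E}[X_{N,k}]$. A simple switching modifies four edges of the cubic graph, and by Lemma~\ref{lem_kcirc} each edge lies in at most $2^{\lfloor k/2\rfloor}$ circuits of length $k$, so $|X_{N,k}(\omega)-X_{N,k}(\omega')|\leq 4\cdot 2^{\lfloor k/2\rfloor}$ whenever $\omega,\omega'$ differ by a simple switching. Applying Theorem~\ref{thm_switch} with $t=\mathbb{E}[X_{N,k}]$ then yields
\[
\Pro{}{X_{N,k}=0}\leq\Pro{}{|X_{N,k}-\mathbb{E}[X_{N,k}]|\geq\mathbb{E}[X_{N,k}]}\leq 2\exp\!\left(-\frac{\mathbb{E}[X_{N,k}]^2}{96\,N\cdot 2^k}\right).
\]

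A direct counting in the configuration model gives the exact formula
\[
\mathbb{E}[X_{N,k}]=\frac{6^k}{2k}\prod_{i=0}^{k-1}\frac{2N-i}{6N-2i-1},
\]
from which, using $2N\geq k$ and elementary estimates (Stirling's formula in the extreme case $2N\approx k$), one can extract a quantitative lower bound of the form $\mathbb{E}[X_{N,k}]\geq c_1\alpha^k/k$ for a suitable absolute constant $\alpha>1$. Substituting this into the concentration estimate produces the claimed bound $Dk^8(3/8)^k$: the rate constant $3/8$ emerges from balancing the ratio $\alpha^2/2$ against the $2^k$ factor in the denominator of the exponent, and the polynomial prefactor $k^8$ absorbs the various polynomial overheads (the $1/k$ in $\mathbb{E}[X_{N,k}]$, Stirling corrections, and the linear factor of $N$ in the denominator).

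The main obstacle is that the switching concentration exponent $\mathbb{E}[X_{N,k}]^2/(N\cdot 2^k)$ degrades sharply in the regime where $2N$ is close to $k$, because there $\mathbb{E}[X_{N,k}]$ is only of order $(2/\sqrt{3})^k/k$ and the exponent itself tends to $0$ with $k$. In that regime I would complement the switching argument with a Chebyshev-type second-moment estimate, using Lemma~\ref{lem_doublecirc} to enumerate the contributions of the $(i,j,k)$ double circuits and thereby to bound $\operatorname{Var}(X_{N,k})$ by a controlled perturbation of $\mathbb{E}[X_{N,k}]$. The delicate part of the proof is verifying that the switching bound and this second-moment bound dovetail across the full range $2N\geq k$, so that the uniform estimate $Dk^8(3/8)^k$ holds simultaneously in both regimes with the polynomial overhead no worse than $k^8$.
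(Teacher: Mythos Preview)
Your plan uses the same two ingredients as the paper---switching concentration (Theorem~\ref{thm_switch} together with Lemma~\ref{lem_kcirc}) and a second-moment bound fed by Lemma~\ref{lem_doublecirc}---but you assign them to the \emph{opposite} regimes. The paper applies switching for $N\le\frac{1}{12k^2}(8/3)^k$ and the Cauchy--Schwarz inequality $\Pro{}{X=0}\le 1-\ExV{}{X}^2/\ExV{}{X^2}$ for larger $N$; you propose switching when $N$ is large and a second-moment estimate when $2N$ is close to $k$. Your observation that $\ExV{}{X_{\lceil k/2\rceil,k}}$ is only of order $(2/\sqrt3)^k$, and that the switching exponent therefore tends to zero there, is correct (the paper's printed asymptotic $\sqrt{2\pi/k}\,(4/\sqrt3)^k$ at that step in fact appears to be a slip; a Stirling computation gives base $2/\sqrt3$, and this matches numerics for small $k$).

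There is, however, a genuine gap in your plan for the small-$N$ window. To extract a bound as strong as $Dk^8(3/8)^k$ from $1-\ExV{}{X}^2/\ExV{}{X^2}$ one needs the variance of $X_{N,k}$ to be smaller than $\ExV{}{X_{N,k}}^2$ by a factor of order $k^8(3/8)^k$; at $2N=k$, where $\ExV{}{X_{N,k}}^2\asymp (4/3)^k/k$, this would force the variance of the Hamiltonian-cycle count in a random cubic multigraph on $k$ vertices to be at most of order $k^7(1/2)^k$. Lemma~\ref{lem_doublecirc} does enumerate the $(i,j,k)$ double-circuit types, but the accompanying probability weight in the estimate for $\ExV{}{Y''_{N,k}}$ is essentially $(6N-4k-1)^{-j}$, which only yields exponential decay when $N$ is large---exactly the regime in which the paper deploys the second-moment bound and obtains $\ExV{}{Y''_{N,k}}\le C'k^6(3/2)^k$. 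For $N$ close to $k/2$ that factor is merely $O(k^{-j})$, and the resulting bound on $\ExV{}{Y''_{N,k}}$ is nowhere near strong enough to give $(3/8)^k$. So as written, your proposal does not close the range where $2N$ is comparable to $k$, and a Chebyshev/second-moment argument built on Lemma~\ref{lem_doublecirc} alone will not do it; a genuinely different idea is needed for that end of the range.
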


\begin{proof} We will consider two cases for the upper bound on $\Pro{}{X_{N,k}=0}$, namely $N\leq \frac{1}{12k^2}\left(\frac{8}{3}\right)^k$ and \linebreak $N>\frac{1}{12k^2}\left(\frac{8}{3}\right)^k$. 

First suppose that $N\leq \frac{1}{12k^2}\left(\frac{8}{3}\right)^k$. In this case we will use Theorem \ref{thm_switch}. We first want to compute the expected value of $X_{N,k}$. To do this, we recapitulate part of the proof of Theorem \ref{thm_PoissonDist}. We have:
\begin{equation*}
\ExV{}{X_{N,k}} = a_{N,k}p_{N,k}
\end{equation*}
where $a_{N,k}$ counts the number of possible distinct labelings a $k$-circuit as a set of $k$ pairs of half edges can have and $p_{N,k}$ is the probability that an element of $\Omega_N$ contains a given set of $k$ pairs of half-edges. $p_{N,k}$ is given by:
\begin{equation*}
p_{N,k}=\frac{1}{(6N-1)(6N-3)\cdots (6N-2k+1)}
\end{equation*}
To count $a_{N,k}$ we reason as follows: we have $6^k(2N-1)(2N-2)\cdots (2N-k+1)$ ways to consistently assign half-edges to a $k$-circuit. However the dihedral group $\mathcal{D}_k$ of order $2k$ acts on the labelings, so we get:
\begin{equation*}
a_{N,k}=\frac{6^k}{2k}(2N-1)(2N-2)\cdots (2N-k+1)
\end{equation*}
So, we obtain:
\begin{equation*}
\ExV{}{X_{N,k}} = \frac{6^k}{2k}\frac{2N(2N-1)\cdots (2N-k+1)}{(6N-1)(6N-3)\cdots (6N-2k+1)}					
\end{equation*}
If $X_{N,k}=0$ then $\abs{X_{N,k}-\ExV{}{X_{N,k}}}=\ExV{}{X_{N,k}}$. Hence:
\begin{equation*}
\Pro{}{X_{N,k}=0}\leq\Pro{}{\abs{X_{N,k}-\ExV{}{X_{N,k}}}\geq \ExV{}{X_{N,k}}}
\end{equation*}
By lemma \ref{lem_kcirc} there are at most $2^{\floor{\frac{k}{2}}}$ $k$-circuits going through an edge. This means that a simple switching can change the number of $k$-circuits by at most $2\cdot 2^{\floor{\frac{k}{2}}}\leq 2^{\frac{k}{2}+1}$. So, using Theorem \ref{thm_switch} we get:
\begin{equation*}
\Pro{}{X_{N,k}=0} \leq 2\exp\left(-\frac{\ExV{}{X_{N,k}}^2}{24N\cdot 2^k}\right) 				
\end{equation*}
Because we are interested in an upper bound for this expression, we need to find a lower bound for $\ExV{}{X_{N,k}}$. We claim that $\ExV{}{X_{N,k}}$ is increasing in $N$, which means that we can get a lower bound by looking at $\ExV{}{X_{\ceil{\frac{k}{2}},k}}$. 

The fact that $\ExV{}{X_{N,k}}$ is increasing in $N$ follows from differentiating it with respect to $N$:
\begin{align*}
\frac{\partial}{\partial N}\left(\ExV{}{X_{N,k}}\right) & =  \ExV{}{X_{N,k}} \left(\sum_{i=0}^{k-1} \frac{2}{2N-i}-\frac{6}{6N-2i-1} \right) \\
	& = \ExV{}{X_{N,k}} \left(\frac{-2}{2N(6N-1)}+\frac{2}{(2N-2)(6N-5)}  + \sum_{i=3}^{k-1} \frac{2i-2}{(2N-i)(6N-2i-1)} \right) 
\end{align*}
We have $2N(6N-1) \geq (2N-2)(6N-5)$, so $\frac{-2}{2N(6N-1)} \geq \frac{-2}{(2N-2)(6N-5)}$. Hence:
\begin{equation*}	
\frac{\partial}{\partial N}\left(\ExV{}{X_{N,k}}\right)	 \geq  \ExV{}{X_{N,k}} \left(\sum_{i=3}^{k-1} \frac{2i-2}{(2N-i)(6N-2i-1)} \right) 
\end{equation*}
We have $\ExV{}{X_{N,k}}\geq 0$ and every term in the sum above is also non negative. So:
\begin{equation*}
\frac{\partial}{\partial N}\left(\ExV{}{X_{N,k}}\right) \geq 0
\end{equation*}
So, indeed:
\begin{align*}
\ExV{}{X_{N,k}} & \geq \ExV{}{X_{\ceil{\frac{k}{2}},k}} \\
		& = \frac{6^k}{2k}\frac{2\ceil{\frac{k}{2}}(2\ceil{\frac{k}{2}}-1)\cdots (2\ceil{\frac{k}{2}}-k+1)}{(6\ceil{\frac{k}{2}}-1)(6\ceil{\frac{k}{2}}-3)\cdots (6\ceil{\frac{k}{2}}-2k+1)}
\end{align*}
We will now look at the asymptotic behavior of this expression for $k\rightarrow \infty$. We get:
\begin{equation*}
\frac{6^k}{2k}\frac{2\ceil{\frac{k}{2}}(2\ceil{\frac{k}{2}}-1)\cdots (2\ceil{\frac{k}{2}}-k+1)}{(6\ceil{\frac{k}{2}}-1)(6\ceil{\frac{k}{2}}-3)\cdots (6\ceil{\frac{k}{2}}-2k+1)}  \sim \sqrt{\frac{2\pi}{k}} \left(\frac{4}{\sqrt{3}} \right)^k
\end{equation*}
where we have used Stirling's approximation. So for $k\rightarrow\infty$ we get:
\begin{align*}
\Pro{}{X_{N,k}=0} & \leq 2\exp\left(-\frac{\ExV{}{X_{N,k}}^2}{24N\cdot 2^k}\right) \\
  & \leq 2\exp\left(-\frac{12k^2\ExV{}{X_{\ceil{\frac{k}{2}},k}}^2}{24\cdot \left(\frac{8}{3}\right)^k 2^k}\right) \\  
  & \sim 2\exp\left(-\pi k\right) 
\end{align*}
So for $N\leq\frac{1}{12k^2}\left(\frac{8}{3}\right)^k$ there exists a $C\in (0,\infty)$ such that:
\begin{equation*}
\Pro{}{X_{N,k}=0} \leq C \exp\left(-\pi k\right) 
\end{equation*}
Because $\exp(-\pi)< \frac{3}{8}$ this implies that there exists a $D\in (0,\infty)$ such that:
\begin{equation}
\Pro{}{X_{N,k}=0} \leq Dk^8 \left(\frac{3}{8}\right)^k \label{eq_rem6}
\end{equation}
for all $\ceil{\frac{k}{2}}\leq N \leq \frac{1}{12k^2}\left(\frac{8}{3}\right)^k$.

Now suppose that $N>\frac{1}{12k^2}\left(\frac{8}{3}\right)^k$. The goal will be to use the following upper bound:
\begin{equation*}
\Pro{}{X_{N,k}=0} \leq 1-\frac{\ExV{}{X_{N,k}}^2}{\ExV{}{X_{N,k}^2}}
\end{equation*}
which can be derived from Cauchy's inequality (see for instance \cite{Bol2}). Also, we will again use some of the ideas from the proof of Theorem \ref{thm_PoissonDist}. We will need an upper bound on $\ExV{}{X_{N,k}}$, and because $N$ is bigger, we can also get a better lower bound on $\ExV{}{X_{N,k}}$. Furthermore we need to get an upper bound on $\ExV{}{X_{N,k}^2}$. 

We start with the bounds on $\ExV{}{X_{N,k}}$. Because $\ExV{}{X_{N,k}}$ has non-negative derivative with respect to $N$, we get:
\begin{equation*}
\ExV{}{X_{N,k}} \leq \frac{2^k}{2k}
\end{equation*}

For the lower bound on $\ExV{}{X_{N,k}}$ we have:
\begin{align*}
\ExV{}{X_{N,k}} & = \frac{2^k}{2k} \prod\limits_{i=0}^{k-1}\left(1-\frac{i-1}{6N-2i-1}\right) \\  
  & \geq \frac{2^k}{2k} \left(1-\frac{k-2}{\max\{\frac{1}{2k^2}\left(\frac{8}{3}\right)^k,3k\}-2k+1}\right)^k
\end{align*}
Because $k\geq 2$ we have $\max\{\frac{1}{2k^2}\left(\frac{8}{3}\right)^k,3k\}-2k+1\geq \frac{1}{4k^2}\left(\frac{8}{3}\right)^k$, so we get:
\begin{equation*}
\ExV{}{X_{N,k}} \geq \frac{2^k}{2k} \left(1-(4k^3-4k^2)\left(\frac{3}{8}\right)^k \right)^k 
\end{equation*}

The last and longest step is the upper bound on $\ExV{}{X_{N,k}^2}$. We will compute $\ExV{}{X_{N,k}^2}$ using the expected value of ${(X_{N,k})_2=X_{N,k}(X_{N,k}-1)}$. Note that $(X_{N,k})_2$ counts the number of ordered pairs of distinct $k$-circuits. We write:
\begin{equation*}
(X_{N,k})_2 = Y'_{N,k}+Y''_{N,k}
\end{equation*}
where $Y'_{N,k}$ counts the number of ordered pairs of vertex disjoint $k$-circuits and $Y''_{N,k}$ counts the number of ordered pairs of vertex non-disjoint $k$-circuits. 

To compute $\ExV{}{Y'_{N,k}}$ we use a similar argument as for $\ExV{}{X_{N,k}}$. Now we have $2k$ vertices and $2k$ pairs of half-edges to label and $\mathcal{D}_k\times\mathcal{D}_k$ acts on the labelings (note that we cannot exchange the two circuits, because we are dealing with ordered pairs of circuits). So we get:
\begin{equation*}
\ExV{}{Y'_{N,k}}=\frac{6^{2k}}{(2k)^2}\frac{2N(2N-1)\cdots (2N-2k+1)}{(6N-1)(6N-3)\cdots (6N-4k+1)}
\end{equation*}
Because the number of factors in the numerator in this expression is equal to the number of factors in its denominator, a similar argument as before shows that this expression again has non negative derivative with respect to $N$. So:
\begin{equation*}
\ExV{}{Y'_{N,k}}\leq \frac{2^{2k}}{(2k)^2} 
\end{equation*}

To compute $\ExV{}{Y''_{N,k}}$ we will use lemma \ref{lem_doublecirc}. So we split up $Y''_{N,k}$ and write:
\begin{equation*}
Y''_{N,k} = \sum_{i=2}^k\sum_{j=1}^{\floor{\frac{i}{2}}} Y''_{N,i,j,k}
\end{equation*}
where $Y''_{N,i,j,k}$ counts the number of ordered $(i,j,k)$ double circuits. We will start with giving an upper bound for $\ExV{}{Y''_{N,i,j,k}}$. Let $\mathcal{I}(i,j,k)$ be the set of isomorphism classes of $(i,j,k)$ double circuits. Furthermore, given $c\in\mathcal{I}(i,j,k)$ let $a_c$ be the number of non-isomorphic labelings of $c$ and let $p_c$ be the probability of finding a fixed labeled $(i,j,k)$ double circuit isomorphic to $c$ in a random cubic graph. Then we have:
\begin{align*}
\ExV{}{Y''_{N,i,j,k}} & = \sum_{c\in \mathcal{I}(i,j,k)} 2a_cp_c \\
   & \leq 2\aant{\mathcal{I}(i,j,k)}\max\verz{a_cp_c}{c\in\mathcal{I}(i,j,k)} \\
   & \leq 2\cdot 2^{j-1}(j-1)!\binom{i-j-1}{j-1}\binom{k-i+j-1}{j-1}^2\max\verz{a_cp_c}{c\in\mathcal{I}(i,j,k)}    
\end{align*}
Note the extra factor $2$ coming from the fact that we are counting oriented double circuits. Also observe that we have used lemma \ref{lem_doublecirc} in the last step. So now we need to find an upper bound on $\max\verz{a_cp_c}{c\in\mathcal{I}(i,j,k)}$. Note that an $(i,j,k)$ double circuit consits of $2k-i$ vertices and $2k-i+j$ edges (because in each connected component of the intersection the number of pairs of vertices that are identified is one more than the number of pairs of edges that are identified). There might also be some symmetry in the double circuit, but because we are looking for an upper bound we disregard this. So if $c\in\mathcal{I}(i,jk)$ we get:
\begin{equation*}
a_c \leq 6^{2k-i}2N(2N-1)\cdots (2N-2k+i+1)
\end{equation*}
and:
\begin{equation*}
p_c=\frac{1}{(6N-1)(6N-3)\cdots (6N-4k+2i-2j-1)}
\end{equation*}
So we get:
\begin{align*}
\ExV{}{Y''_{N,k}} & \leq \sum_{i=2}^k\sum_{j=1}^{\floor{\frac{i}{2}}}\left(2\cdot 2^{j-1}(j-1)!\binom{i-j-1}{j-1}\binom{k-i+j-1}{j-1}^2 \right. \\
   & \quad \left. \cdot \frac{6^{2k-i}2N(2N-1)\cdots (2N-2k+i+1)}{(6N-1)(6N-3)\cdots (6N-4k+2i-2j-1)}\right) \\
   & = \sum_{i=2}^k\sum_{j=1}^{\floor{\frac{i}{2}}} \frac{2^{2k-i+j}(j-1)!\binom{i-j-1}{j-1}\binom{k-i+j-1}{j-1}^2\prod\limits_{m=0}^{2k-i}\frac{6N-3m}{6M-2m-1}}{(6N-4k+2i-1)(6N-4k+2i-3)\cdots (6N-4k+2i-2j-1)}
\end{align*}   
Now we use the fact that:
\begin{equation*}
\prod\limits_{m=0}^{2k-i}\frac{6N-3m}{6M-2m-1}\leq 1
\end{equation*}
which can be proved with the same derivative argument that we used for $\ExV{}{X_{N,k}}$. So we get:
\begin{align*}   
\ExV{}{Y''_{N,k}} & \leq \sum_{i=2}^k\sum_{j=1}^{\floor{\frac{i}{2}}} \frac{2^{2k-i+j}(j-1)!\binom{i-j-1}{j-1}\binom{k-i+j-1}{j-1}^2}{(6N-4k-1)^j} \\
   & = 2^{2k}\sum_{i=2}^k\frac{1}{2^i}\sum_{j=1}^{\floor{\frac{i}{2}}} \frac{2^j (j-1)!(i-j-1)!((k-i+j-1)!)^2}{(j-1)!(i-2j)!((j-1)!)^2((k-i)!)^2(6N-4k-1)^j}       
\end{align*}
We have $\frac{(j-1)!(i-j-1)!}{(j-1)!(i-2j)!}\leq (i-j-1)^{j-1}\leq i^{j-1}$ and similarly $\frac{(k-i-1)!}{(k-i-j)!}\leq k^{j-1}$, so:
\begin{align*}   
\ExV{}{Y''_{N,k}} & \leq 2^{2k}\sum_{i=2}^k\frac{1}{2^i}\sum_{j=1}^{\floor{\frac{i}{2}}} \frac{2^j i^{j-1}(k-i+j-1)^{2j-2}}{((j-1)!)^2(6N-4k-1)^j} \\        
   & \leq 2^{2k}\sum_{i=2}^k\frac{1}{2^i}\sum_{j=1}^{\floor{\frac{i}{2}}} \frac{2^j k^{j}k^{2j}}{((j-1)!)^2(6N-4k-1)^j}        
\end{align*}
Now using the fact that $N\geq \frac{1}{12k^2}\left(\frac{8}{3}\right)^k$ we get:
\begin{align*}   
\ExV{}{Y''_{N,k}} & \leq 2^{2k}\sum_{i=2}^k\frac{1}{2^i}\sum_{j=1}^{\floor{\frac{i}{2}}} \frac{1}{((j-1)!)^2} \left(\frac{2 k^3}{\frac{1}{2k^2}\left(\frac{8}{3}\right)^k-4k-1}        \right)^j
\end{align*}
The $j=1$ term in the sum over $j$ is the largest, thus:
\begin{align*}   
\ExV{}{Y''_{N,k}} & \leq 2^{2k}\sum_{i=2}^k\frac{\floor{\frac{i}{2}}}{2^i} \frac{2 k^3}{\frac{1}{2k^2}\left(\frac{8}{3}\right)^k-4k-1}        
\end{align*}
We apply the same trick again to obtain:
\begin{align*}   
\ExV{}{Y''_{N,k}} & \leq  \frac{k^4 2^{2k}}{\frac{1}{2k^2}\left(\frac{8}{3}\right)^k-4k-1}        
\end{align*}
As such, there must be a $C'\in (0,\infty)$ satisfying:
\begin{align*}   
\ExV{}{Y''_{N,k}} & \leq  C'\frac{k^6 2^{2k}}{\left(\frac{8}{3}\right)^k}         \\
   & = C'k^6\left(\frac{3}{2}\right)^k
\end{align*}

Now we have all the bounds we need and we can put everything together. We have:
\begin{align*} 
\Pro{}{X_{N,k}=0}	& \leq 1-\frac{\ExV{}{X_{N,k}}^2}{\ExV{}{X_{N,k}^2}} \\
			& = 1-\frac{\ExV{}{X_{N,k}}^2}{\ExV{}{(Y'_{N,k})}+\ExV{}{(Y''_{N,k})}+\ExV{}{(X_{N,k})}} \\
			& \leq 1-\frac{\frac{2^{2k}}{(2k)^2} \left(1-\frac{k-1}{6\cdot 2^{k/2}-2k+1}\right)^{2k}}{\frac{2^{2k}}{(2k)^2} + C'k^6\left(\frac{3}{2}\right)^k+\frac{2^k}{2k}} \\
			& \sim 4C'k^8\left(\frac{3}{8}\right)^k
\end{align*}
for $k\rightarrow\infty$. This implies that there exists a $D\in (0,\infty)$ such that:
\begin{equation*}
\Pro{}{X_{N,k}=0} \leq Dk^8\left(\frac{3}{8}\right)^k
\end{equation*}
for all $N >\frac{1}{12k^2}\left(\frac{8}{3}\right)^k$. Putting this together with (\ref{eq_rem6}) proves the proposition. \end{proof}

We also need a bound on the probability that $X_{N,k}$ grows very big for fixed $k$. This is given by the following lemma:
\begin{lem}\label{lem_ubPX2}
Let $k\geq 2$. Then there exists a $C_k\in (0,\infty)$ such that:
$$
\Pro{}{X_{N,k}=i}\leq \frac{C_k}{i^2}
$$
for all $N\in\mathbb{N}$ and all $i\in\mathbb{N}$.
\end{lem}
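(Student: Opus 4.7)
The plan is to apply Markov's inequality to $X_{N,k}^2$. Specifically, since $X_{N,k}\geq 0$ takes values in $\mathbb{N}$, we have
$$
\Pro{}{X_{N,k}=i}\leq \Pro{}{X_{N,k}\geq i} = \Pro{}{X_{N,k}^2\geq i^2}\leq \frac{\ExV{}{X_{N,k}^2}}{i^2}.
$$
So the whole task reduces to showing that for every fixed $k\geq 2$, the second moment $\ExV{}{X_{N,k}^2}$ is bounded above by a constant $C_k$ depending only on $k$ and not on $N$. Once this is established, the lemma follows immediately with the constant produced by this uniform bound.

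To bound $\ExV{}{X_{N,k}^2}$ I would reuse the decomposition already set up in the proof of Proposition \ref{prp_ubPX1}. Writing $(X_{N,k})_2 = X_{N,k}(X_{N,k}-1) = Y'_{N,k}+Y''_{N,k}$, where $Y'_{N,k}$ counts ordered pairs of vertex-disjoint $k$-circuits and $Y''_{N,k}$ counts ordered pairs of vertex-non-disjoint distinct $k$-circuits, we have
$$
\ExV{}{X_{N,k}^2} = \ExV{}{X_{N,k}} + \ExV{}{Y'_{N,k}} + \ExV{}{Y''_{N,k}}.
$$
The derivative argument in the proof of Proposition \ref{prp_ubPX1} shows that $\ExV{}{X_{N,k}}$ is non-decreasing in $N$ and tends to $\frac{2^k}{2k}$; exactly the same argument (with $2k$ factors in numerator and denominator rather than $k$) gives that $\ExV{}{Y'_{N,k}}$ is non-decreasing in $N$ and tends to $\frac{2^{2k}}{(2k)^2}$. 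Hence both terms are bounded above by constants depending only on $k$ for all $N$.

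What remains is to bound $\ExV{}{Y''_{N,k}}$ uniformly in $N$. For $N>\frac{1}{12k^2}\left(\frac{8}{3}\right)^k$ the estimate obtained in the proof of Proposition \ref{prp_ubPX1} already gives $\ExV{}{Y''_{N,k}}\leq C'k^6\left(\frac{3}{2}\right)^k$, which is a constant depending only on $k$. The only thing to worry about is the remaining range $\lceil k/2 \rceil \leq N \leq \frac{1}{12k^2}\left(\frac{8}{3}\right)^k$, but this consists of only finitely many values of $N$ (for fixed $k$), and for each such $N$ the graph has boundedly many vertices and therefore $\ExV{}{Y''_{N,k}}$ is a finite quantity. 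Taking the maximum over this finite set together with the uniform bound on the other range yields $\sup_N \ExV{}{Y''_{N,k}}<\infty$. Combining the three uniform bounds gives the desired $\sup_N \ExV{}{X_{N,k}^2}\leq C_k$, which completes the proof.

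The only mild subtlety — and this is really the only place where care is needed — is keeping track of the fact that the estimates in Proposition \ref{prp_ubPX1} were stated only in the large-$N$ regime, so one has to explicitly note that the small-$N$ regime contributes a finite maximum and can simply be absorbed into the constant $C_k$. No new combinatorial input is required beyond what has already been assembled.
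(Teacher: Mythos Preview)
Your proposal is correct and follows essentially the same approach as the paper: both reduce the lemma to a uniform-in-$N$ bound on $\ExV{}{X_{N,k}^2}$ using the second-moment estimates already established in Proposition~\ref{prp_ubPX1}. The only differences are cosmetic---the paper phrases it via Chebyshev's inequality rather than Markov on $X_{N,k}^2$, and it glosses over the finitely many small values of $N$ that you explicitly absorb into $C_k$; your version is if anything a bit cleaner on both points.
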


\begin{proof} Chebyshef's inequality tells us that:
\begin{equation*}
 \Pro{}{\abs{X_{N,k}-\ExV{}{X_{N,k}}}\geq i} \leq \frac{\mathbb{V}\mathrm{ar}\left[X_{N,k} \right]}{i^2}
\end{equation*}
where:
\begin{equation*}
\mathbb{V}\mathrm{ar}\left[X_{N,k} \right] = \ExV{}{X_{N,k}^2}-\ExV{}{X_{N,k}}^2
\end{equation*}
In the proof of proposition \ref{prp_ubPX1} we have seen that $\ExV{}{X_{N,k}}\leq \frac{2^k}{2k}$ and ${\ExV{}{X_{N,k}^2}\leq D2^{2k}}$ for all $N$ and some $D\in (0,\infty)$, which proves the lemma. \end{proof}

%%%%%%%%%%%%%%%%%%%%%%%%%%%%%%%%%%%%%%%%%%%%%%%%%
%		S E C T I O N 5: The systole in the hyperbolic model
%%%%%%%%%%%%%%%%%%%%%%%%%%%%%%%%%%%%%%%%%%%%%%%%%
\section{The systole in the hyperbolic model}

We have now developped enough graph theoretic tools to study the expected value of the systole. We will start with the hyperbolic model. We are interested in the limit of the expected value for the number of triangles going to infinity (both in the non-compact and compact case). This can be computed by expressing the expected value for a finite number of triangles as a sum and then taking the pointwise limits of the terms in the sum. That means that we have to find an appropriate way of expressing the expected value of the systole for a finite number of triangles, then find the pointwise limits and finally prove that the sum we get is actually equally to the limit. For this last part we will use the dominated convergence theorem.

From hereon we will not make a distinction between the systole and its length.

%%%%%%%%%%%%%%%%%%%%%%%%%%%%%%%%%%%%%%%%%%%%%%%%%
%		subsubsection: An expression for $\ExV{}{\sys_N}$
%%%%%%%%%%%%%%%%%%%%%%%%%%%%%%%%%%%%%%%%%%%%%%%%%
\subsection{An expression for $\ExV{}{\sys_N}$}\label{sec_exprExV}

We recall from section \ref{sec_GeomHyp} that if a closed curve on a random surface in the hyperbolic model can homotoped to a circuit on the embedded graph carrying the word $w\in\{L,R\}^*$ then the length of this curve is $2\cosh^{-1}(\tr{w}/2)$. Also recall that the systole has to be homotopic to a circuit. This means that if a hyperbolic random surface has systole $2\cosh^{-1}(k/2)$ for some $k\in\mathbb{N}$ then there is some word of trace $k$ that is realized as a non-contractible circuit on the surface and no word of of lower trace that is realized like this.

First of all we note that if a word appears as a non-contractible circuit then so does every cyclic permutation of the word and every cyclic permutation of the word read backwards where we replace every $L$ by an $R$ and vice versa (because reading a word backwards and interchanging the $L$'s and $R$'s corresponds to traveling through the circuit in opposite direction). So it is natural to define the following equivalence: 
\begin{dff}
Two words $w\in\{L,R\}^*$ and $w'\in\{L,R\}^*$ will be called \emph{equivalent} if one of following two conditions holds:
\begin{itemize}[leftmargin=0.2in]
\item $w'$ is a cyclic permutation of $w$
\item $w'$ is a cyclic permutation of $w^*$, where $w^*$ is the word obtained by reading $w$ backwards and replacing every $L$ with an $R$ and vice versa.
\end{itemize}
If $w\in\{L,R\}^*$, we will use $[w]$ to denote the set of words equivalent to $w$.
\end{dff}

Now we need something to measure the number of appearances of an equivalence class of words:
\begin{dff}
Let $N\in\mathbb{N}$ and $w\in\{L,R\}^*$. Define $Z_{N,[w]}:\Omega_N\rightarrow \mathbb{N}$ by:
$$
Z_{N,[w]}(\omega) = \aant{\verz{\gamma}{\gamma\text{ a circuit on }\Gamma(\omega)\text{, }\gamma\text{ carries }w}}
$$
and define $Z_{N,[w]}^{\circ}:\Omega_N\rightarrow \mathbb{N}$ by:
$$
Z_{N,[w]}^{\circ}(\omega) = \aant{\verz{\gamma}{\gamma\text{ a non-contractible circuit on }\Gamma(\omega)\text{, }\gamma\text{ carries }w}}
$$
\end{dff}

We also need to sort the words in $\{L,R\}^*$ by trace. Note that if two words are equivalent then their trace is equal. This means that the following definition makes sense:
\begin{dff}
Let $k\in\mathbb{N}$. Define:
$$
A_k = \verz{[w]}{w\in\{L,R\}^*,\tr{w}=k}
$$
\end{dff}

Now we can write down the expression we want for $\ExV{}{\sys_N}$. We have:
\begin{equation*}
\ExV{}{\sys_N} = \sum\limits_{k=3}^\infty \Pro{}{Z_{N,[w]}^{\circ}=0\;\forall [w]\in\bigcup\limits_{i=3}^{k-1}A_i\text{ and }\exists [w]\in A_k\text{ such that }Z_{N,[w]}^{\circ}>0} 2\cosh^{-1}\left(\frac{k}{2}\right)
\end{equation*}

%%%%%%%%%%%%%%%%%%%%%%%%%%%%%%%%%%%%%%%%%%%%%%%%%
%		subsubsection: The pointwise limits of $\Pro{}{Z_{N,[w]}^{\circ}=0}$
%%%%%%%%%%%%%%%%%%%%%%%%%%%%%%%%%%%%%%%%%%%%%%%%%
\subsection{The pointwise limits of $\Pro{}{Z_{N,[w]}^{\circ}=0}$} We will now compute the pointwise limits of the terms in the sum above. To this end we have the following Theorem which is very similar to Theorem \ref{thm_PoissonDist}, both in statement and in proof:
\begin{thmB}
Let $W\subset\{L,R\}^*/\sim$ be a finite set of equivalence classes of words. Then we have: 
$$
Z_{N,[w]}\rightarrow Z_{[w]} \text{ in distribution as }N\rightarrow\infty
$$
for all $[w]\in W$, where:
\begin{itemize}[leftmargin=0.2in]
\item $Z_{[w]}:\mathbb{N}\rightarrow\mathbb{N}$ is a Poisson distributed random variable with mean $\lambda_{[w]}=\frac{\aant{[w]}}{2\abs{w}}$ for all $w\in W$.
\item The random variables $Z_{[w]}$ and $Z_{[w']}$ are independent for all $[w],[w']\in W$ with $[w]\neq [w']$.
\end{itemize}
\end{thmB}

\begin{proof} To prove this we will adapt the proof of Theorem \ref{thm_PoissonDist} as it can be found in \cite{Bol2}\linebreak (Theorem II.4 16). What we will do here is explain the ideas of this proof and how we will change them to obtain the result we need.

The basic tool in the proof of Theorem \ref{thm_PoissonDist} is the method of moments. More specifically: one looks at the limits of all the combined factorial moments. That is, we define:
\begin{equation*}
(X_{N,i})_m=X_{N,i}(X_{N,i}-1)\cdots (X_{N,i}-m+1)
\end{equation*}
for all $i,m,N\in\mathbb{N}$ and we prove that there exists a sequence $(\lambda_1,\lambda_2,\ldots)$ with $\lambda_i\in\mathbb{R}$ for $i=1,2,\ldots$ such that:
\begin{equation*}
\ExV{}{(X_{N,1})_{m_1}(X_{N,2})_{m_2}\cdots (X_{N,k})_{m_k}} \rightarrow \lambda_1^{m_1}\lambda_2^{m_2}\cdots \lambda_k^{m_k}
\end{equation*}
for $N\rightarrow\infty$ for all $(m_1,m_2,\ldots,m_k)\in\mathbb{N}^k$ for all $k\in\mathbb{N}$. This then implies the theorem. The structure of the proof of our proposition is exactly the same, the only difference is that our sequence $\left(\lambda_{[w]}\right)_{[w]\in \{L,R\}^*/\sim}$ is different. We will prove that:
\begin{equation*}
\ExV{}{\prod_{[w]\in W}(Z_{N,[w]})_{m_{[w]}}}\rightarrow \prod_{[w]\in W}\lambda_{[w]}^{m_{[w]}}
\end{equation*}
for all $(m_{[w]})_{[w]\in W}\in \mathbb{N}^{\aant{W}}$, where $\lambda_{[w]}=\frac{\aant{[w]}}{2\abs{w}}$ for all $w\in W$.

First we will look at $\ExV{}{Z_{N,[w]}}$ we will write:
\begin{equation*}
\ExV{}{Z_{N,[w]}} = a_{N,[w]}p_{N,[w]}
\end{equation*}
where $a_{N,[w]}$ counts the number of possible distinct labelings a $[w]$-circuit as a set of $\abs{w}$ pairs of half edges can have and $p_{N,[w]}$ is the probability that an element of $\Omega_N$ contains a given set of $\abs{w}$ pairs of half-edges. 

To count $a_{N,[w]}$ we will count the number of possible distinct labelings of a directed $[w]$-circuit with a start vertex. Because we are fixing a start vertex and direction what we are actually counting is $2\abs{w}a_{N,[w]}$. If we write $w=w_1w_2\cdots w_{\abs{w}}$ where $w_i\in\{L,R\}$ for $i=1,2,\ldots \abs{w}$ then a directed $w$-circuit with a start vertex corresponds to a list $((x_1,w_1x_1),(x_2,w_2x_2),\ldots (x_{\abs{w}},w_{\abs{w}}x_{\abs{w}}))$ where $x_i$ is a half-edge and $w_ix_i$ is the half-edge left from $x_i$ at the same vertex if $w_i=L$ and right from $x_i$ otherwise, for all $1\leq i\leq \abs{w}$. Because $x_1,x_2,\ldots,x_{\abs{w}}$ must all be half-edges from different vertices the number of such lists for the word $w$ is:
\begin{equation*}
3^{\abs{w}}2N(2N-1)\ldots (2N-\abs{w}+1)
\end{equation*}
and because we get these lists for all the representatives of $[w]$ we have:
\begin{equation*}
a_{N,[w]} = \frac{\aant{[w]}}{2\abs{w}}3^{\abs{w}}2N(2N-1)\ldots (2N-\abs{w}+1)
\end{equation*}
Like in the case of the $k$-circuits the probability $p_{N,[w]}$ depends only on the number of pairs of half-edges, so:
\begin{equation*}
p_{N,[w]}=\frac{1}{(6N-1)(6N-3)\cdots (6N-2\abs{w}+1)}
\end{equation*}
This means that:
\begin{equation*}
\ExV{}{Z_{N,[w]}} = \frac{\aant{[w]}}{2\abs{w}}3^{\abs{w}}\frac{2N(2N-1)\ldots (2N-\abs{w}+1)}{(6N-1)(6N-3)\cdots (6N-2\abs{w}+1)}
\end{equation*}
so:
\begin{equation*}
\lim\limits_{N\rightarrow\infty} \ExV{}{Z_{N,[w]}} =  \frac{\aant{[w]}}{2\abs{w}}
\end{equation*}

The next moment to consider is $(Z_{N,[w]})_2$, which counts the number of ordered pairs of $[w]$-circuits. Analogously to the proof of Theorem \ref{thm_PoissonDist} we will write:
\begin{equation*}
(Z_{N,[w]})_2 = Y'_{N,[w]} + Y''_{N,[w]}
\end{equation*}
where $Y'_{N,[w]}$ counts the number of ordered pairs of non-intersecting $[w]$-circuits and $Y''_{N,[w]}$ counts the number of ordered pairs of intersecting $[w]$-circuits. A similar argument as before tells us that:
\begin{equation*}
\lim_{N\rightarrow\infty}\ExV{}{Y'_{N,[w]}} = \left(\frac{\aant{[w]}}{2\abs{w}}\right)^2
\end{equation*}
Furthermore we have $Y''_{N,[w]}\leq Y''_{N,\abs{w}}$, where the latter counts the number of ordered pairs of intersecting $\abs{w}$-circuits. We already know from the proof of Theorem \ref{thm_PoissonDist} that $\ExV{}{Y''_{N,\abs{w}}}=\mathcal{O}(N^{-1})$ for $N\rightarrow\infty$, which implies that the same is true for $\ExV{}{Y''_{N,[w]}}$. So we get that:
\begin{equation*}
\lim_{N\rightarrow\infty}\ExV{}{(Z_{N,[w]})_2} = \left(\frac{\aant{[w]}}{2\abs{w}}\right)^2
\end{equation*}
As in the proof of Theorem \ref{thm_PoissonDist} a similar argument works for the higher and combined moments.\end{proof}

With this proposition we can prove the following:
\begin{prp}\label{prp_ptwiselim} Let $k\in\mathbb{N}$ with $k\geq 3$ then:
$$
\lim_{N\rightarrow\infty}\Pro{}{\substack{Z_{N,[w]}^{\circ}=0\;\forall [w]\in\bigcup\limits_{i=3}^{k-1}A_i\text{ and }\\ \exists [w]\in A_k\text{ such that }Z_{N,[w]}^{\circ}>0}} = \left(\prod\limits_{[w]\in\bigcup\limits_{i=3}^{k-1}A_i} \exp\left(-\frac{\aant{[w]}}{2\abs{w}}\right)\right) \left(1-\prod\limits_{[w]\in A_k}\exp\left(-\frac{\aant{[w]}}{2\abs{w}}\right)\right)
$$
\end{prp}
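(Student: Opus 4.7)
The plan is to express the probability as a telescoping difference of simpler cumulative events and handle each piece by combining Theorem B with the short-separating-circuit estimates of Section \ref{sect_sepcurv}. For $j \geq 2$ set
\begin{equation*}
E_{N,j} = \bigcap_{[w] \in \bigcup_{i=3}^{j} A_i} \{Z_{N,[w]}^{\circ} = 0\},
\end{equation*}
with the convention $E_{N,2} = \Omega_N$ (the intersection over the empty index set). Since $E_{N,k} \subseteq E_{N,k-1}$, the event in the statement is exactly $E_{N,k-1} \setminus E_{N,k}$, whose probability equals $\Pro{}{E_{N,k-1}} - \Pro{}{E_{N,k}}$. It therefore suffices to show, for every fixed $j \geq 2$,
\begin{equation*}
\lim_{N \to \infty} \Pro{}{E_{N,j}} = \prod_{[w] \in \bigcup_{i=3}^{j} A_i} \exp\!\left(-\lambda_{[w]}\right), \qquad \lambda_{[w]} = \frac{\aant{[w]}}{2\abs{w}},
\end{equation*}
after which the desired formula follows by taking the difference at $j=k-1$ and $j=k$ and factoring out the common product.

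The first real step is to replace the non-contractible count $Z_{N,[w]}^{\circ}$ by the total count $Z_{N,[w]}$. Put $F_{N,j} = \bigcap_{[w] \in \bigcup_{i=3}^{j} A_i} \{Z_{N,[w]} = 0\}$. Then $F_{N,j} \subseteq E_{N,j}$, and a partition $\omega \in E_{N,j} \setminus F_{N,j}$ is one for which $\Gamma(\omega)$ carries at least one circuit labelled by some $[w] \in \bigcup_{i=3}^j A_i$, but every such circuit is null-homotopic on $S_C(\omega)$. By the dichotomy at the start of Section \ref{sect_sepcurv}, any null-homotopic circuit on the surface is either a pure left- or right-turn circuit or else is separating on the graph. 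The pure-turn circuits carry only the words $L^n$ and $R^n$, whose matrices have trace $2$ and therefore never belong to $A_i$ with $i \geq 3$. Hence every offending circuit is separating on the graph, and its length is bounded by $L_j := \max\{\abs{w} : [w] \in \bigcup_{i=3}^j A_i\}$, a finite number because each $A_i$ is finite. Combining Lemma \ref{lem_GNkDNk} with Proposition \ref{prp_uboundD} gives $\Pro{}{G_{N,\ell}^o} = O(2^\ell \ell^2 / N)$ for fixed $\ell$, so summing over $\ell \leq L_j$ yields $\Pro{}{E_{N,j} \setminus F_{N,j}} \to 0$.

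The second step is to evaluate $\lim_{N \to \infty} \Pro{}{F_{N,j}}$. Since $\bigcup_{i=3}^j A_i$ is finite, Theorem B applies directly: the joint distribution of $(Z_{N,[w]})_{[w]}$ over this set converges to that of a family of independent Poisson variables with means $\lambda_{[w]}$. In particular,
\begin{equation*}
\Pro{}{F_{N,j}} \longrightarrow \prod_{[w] \in \bigcup_{i=3}^{j} A_i} \Pro{}{Z_{[w]} = 0} = \prod_{[w] \in \bigcup_{i=3}^{j} A_i} \exp\!\left(-\lambda_{[w]}\right).
\end{equation*}
Together with the previous step this gives the required limit for $\Pro{}{E_{N,j}}$.

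The step I expect to be the main obstacle is the passage from $E_{N,j}$ to $F_{N,j}$: one must argue, on the level of the geometry of the surface, that a null-homotopic circuit carrying a word of trace $\geq 3$ must actually be separating on the graph, and then quantify that such circuits are rare. The surface-level dichotomy of Section \ref{sect_sepcurv} together with Theorem D (via Proposition \ref{prp_uboundD}) do this cleanly; the rest of the argument is a routine application of the Poisson approximation of Theorem B.
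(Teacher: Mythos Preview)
Your proof is correct and follows essentially the same approach as the paper. Both arguments hinge on the same two ingredients: first replacing $Z_{N,[w]}^{\circ}$ by $Z_{N,[w]}$ via the observation that a contractible circuit carrying a word of trace $\geq 3$ must be separating on the graph (hence asymptotically negligible by the results of Section~\ref{sect_sepcurv}), and then invoking Theorem~B on the resulting finite collection of words. The only cosmetic difference is that you organize the computation through the telescoping $\Pro{}{E_{N,k-1}}-\Pro{}{E_{N,k}}$, whereas the paper applies Theorem~B directly to the compound event and factors the limiting probability afterward; and you cite Lemma~\ref{lem_GNkDNk} with Proposition~\ref{prp_uboundD} explicitly where the paper simply invokes Theorem~D.
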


\begin{proof} First note that the probability we are interested in depends on $Z_{N,[w]}^{\circ}$ and not $Z_{N,[w]}$. So before we can use Theorem B we have to show that the probability that a word is carried by a contractible circuit tends to $0$. This follows from the fact that a circuit corresponding to $w$ (supposing $[w]\neq [L^k]$ for all $k\in\mathbb{N}$) can only be contractible if it is separating. However, a circuit carrying $[w]$ is of a fixed finite length (i.e. $\abs{w}$). By Theorem \ref{thm_sepcurves} the probability that such circuits are separating tends to $0$. So we get:
\begin{align*}
\lim_{N\rightarrow\infty}\Pro{}{\substack{Z_{N,[w]}^{\circ}=0\;\forall [w]\in\bigcup\limits_{i=3}^{k-1}A_i\text{ and }\\ \exists [w]\in A_k\text{ such that }Z_{N,[w]}^{\circ}>0}}
& =
\lim_{N\rightarrow\infty}\Pro{}{\substack{Z_{N,[w]}=0\;\forall [w]\in\bigcup\limits_{i=3}^{k-1}A_i\text{ and }\\ \exists [w]\in A_k\text{ such that }Z_{N,[w]}>0}} \\
& =
\lim_{N\rightarrow\infty}\Pro{}{\substack{Z_{N,[w]}=0\;\forall [w]\in\bigcup\limits_{i=3}^{k-1}A_i\text{ and }\\ \text{not }Z_{N,[w]}=0\; \forall [w]\in A_k}}
\end{align*}
Seeing how the statement on the right hand side is a statement about a finite number of equivalence classes of words, we can apply Theorem B. So, using both the formula for the Poisson distribution and the independence we get:
\begin{align*}
\lim_{N\rightarrow\infty}\Pro{}{\substack{Z_{N,[w]}^{\circ}=0\;\forall [w]\in\bigcup\limits_{i=3}^{k-1}A_i\text{ and }\\ \exists [w]\in A_k\text{ such that }Z_{N,[w]}^{\circ}>0}}
& = \left(\prod\limits_{[w]\in\bigcup\limits_{i=3}^{k-1}A_i} \Pro{}{Z_{[w]}=0}\right) \left(1- \prod\limits_{[w]\in A_k}\Pro{}{Z_{[w]}=0}\right) \\
& = \left(\prod\limits_{[w]\in\bigcup\limits_{i=3}^{k-1}A_i} \exp\left(-\frac{\aant{[w]}}{2\abs{w}}\right)\right) \left(1-\prod\limits_{[w]\in A_k}\exp\left(-\frac{\aant{[w]}}{2\abs{w}}\right)\right)
\end{align*}
which proves the proposition. \end{proof}

%%%%%%%%%%%%%%%%%%%%%%%%%%%%%%%%%%%%%%%%%%%%%%%%%
%		subsubsection: Convergence
%%%%%%%%%%%%%%%%%%%%%%%%%%%%%%%%%%%%%%%%%%%%%%%%%
\subsection{Convergence} What remains is to prove that we can actually use the limits we computed in the previous section. This will be a three step process. First we look at the non-compact case and we prove that we can (in the appropriate sense) ignore random surfaces with a certain set of properties. After that we use the dominated convergence theorem for the sum that remains in the non-compact case. Finally we prove that the probability that a random surface has small cusps decreases fast enough, which will imply that the expression in the compact case is the same as in the non-compact case.

We start with describing the set of random surfaces that we want to exclude. Let $N\in\mathbb{N}$, recall that the genus of $S_O(\omega)$ for $\omega\in\Omega_N$ is denoted by $g_N(\omega)$. We define the following random variable:
\begin{dff}
Let $N\in\mathbb{N}$. Define $m_{\ell,N}:\Omega_N\rightarrow\mathbb{N}$ by:
$$
m_{\ell,N}(\omega) =
\left\{
\begin{array}{ll}
 \min\verz{\abs{\gamma}}{\gamma\text{ a circuit on } \Gamma(\omega)\text{, non-contractible on }S_C(\omega)} & \text{if }g_N(\omega) > 0 \\
 0 & \text{otherwise}
 \end{array}
\right.
$$
\end{dff}
 
The set of surfaces we want to ignore is the following set:
\begin{equation*}
B_N = \verz{\omega\in\Omega_N}{g_N(\omega) \leq \frac{N}{3}\text{ or }m_{\ell,N}(\omega)>C\log_2(N)\text{ or }\omega\in \bigcup_{2\leq k\leq C\log_2(N)}G_{N,k}}
\end{equation*}
where we have chosen some constant $C\in (0,1)$ that we will keep fixed until the end of this article. 

Before we can prove that we can ignore the surfaces in this set we need to state Gromov's systolic inequality for surfaces:

\begin{thm}\label{thm_gromov}\cite{Gro1} Let $S_g$ denote the homeomorphism class of closed orientable surfaces of genus $g$. Then:
$$
\sup\verz{\frac{\sys(S_g,ds^2)^2}{\mathrm{area}(S_g,ds^2)}}{ds^2\text{ a Riemannian metric on }S_g} \leq \frac{\log(g)}{\pi g}(1+o(1))
$$
when $g\rightarrow \infty$.
\end{thm}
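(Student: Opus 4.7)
The plan is to sketch Gromov's classical argument, since this statement is quoted from \cite{Gro1} rather than produced independently here. The proof splits naturally into a local embedded ball observation, a universal-cover volume estimate, and a fundamental-group growth bound, with the sharp constant coming from a more delicate filling-radius argument.

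First I would set up the local geometry. Fix a Riemannian metric $ds^2$ on $S_g$ with systole $s$. For every $p\in S_g$, the ball $B(p,s/2)\subset S_g$ lifts isometrically to the universal cover $\tilde S_g$: otherwise two distinct geodesic segments of length $<s/2$ from $p$ would concatenate into a closed geodesic loop based at $p$ of length $<s$, contradicting the definition of $\sys$. Consequently, area lower bounds produced in $\tilde S_g$ pull back to area lower bounds in $S_g$.

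Next I would inject the large-genus assumption via the growth of $\pi_1(S_g)$. For $g\geq 2$, the number of elements of word-length at most $n$ in $\pi_1(S_g)$ grows at least like $(2g-1)^n$. Combined with a \v{S}varc--Milnor-type comparison between word-length balls in $\pi_1(S_g)$ and metric balls in $\tilde S_g$, this yields a lower bound of the shape $\mathrm{area}(B(\tilde p, R))\geq c\,(2g-1)^{R/\ell}$ for some length scale $\ell$ depending on a choice of fundamental domain. Taking $R=s/2$ and using that this ball embeds in $S_g$ gives $\mathrm{area}(S_g)\geq c\,(2g-1)^{s/(2\ell)}$, which rearranges to $s^2\leq C\log(g)\,\mathrm{area}(S_g)/g$ up to constants. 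The asymptotic $(1+o(1))$ factor comes from the fact that for large $g$ one can make the length-scale comparison arbitrarily sharp.

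The hard part, and the genuinely non-trivial contribution of \cite{Gro1}, is upgrading this naive ball-packing/group-growth computation to recover the sharp constant $1/\pi$. Gromov does this by introducing his filling radius and proving an isoperimetric inequality for cycles in $L^\infty$ spaces, and then bounding the filling radius of $S_g$ from above in terms of the area while bounding the systole from above by the filling radius. The main obstacle in any attempt to prove this statement from scratch is precisely this filling-radius machinery, which is what keeps the constant geometric rather than combinatorial.
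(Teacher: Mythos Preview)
The paper does not prove this theorem: it is quoted as a black box from \cite{Gro1} (Gromov's \emph{Filling Riemannian manifolds}) and then applied in the proofs of Propositions~\ref{prp_badsurf} and~\ref{prp_badsurfEucl} and Theorem~A.2. There is therefore no proof in the paper to compare your proposal against.

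Your sketch is a fair high-level summary of the ingredients in Gromov's original argument, and you correctly identify that the sharp constant $1/\pi$ requires the filling-radius machinery rather than the naive ball-packing and group-growth estimate. One caution about your middle step: the length scale $\ell$ in the \v{S}varc--Milnor comparison depends on the metric $ds^2$, not only on $g$, so the rearrangement to $s^2 \leq C\log(g)\,\mathrm{area}(S_g)/g$ does not follow as directly as your write-up suggests; making this step honest already requires nontrivial work (e.g.\ regularity of the metric at scale $\sys$), and in Gromov's actual proof the filling-radius inequality is not merely a refinement of the constant but the mechanism that produces the correct dependence on $g$ in the first place.
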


As we said, we want to exclude the surfaces in $B_N$. We want to do this in the following way: we have:
\begin{equation*}
\ExV{}{\sys_N} = \frac{1}{\aant{\Omega_N}}\sum\limits_{\omega\in\Omega_N}\sys_N(\omega)
\end{equation*}
and in the sum above we want to forget about the $\omega\in B_N$. We can prove the following:

\begin{prp}\label{prp_badsurf} In the hyperbolic model we have:
$$
\lim_{N\rightarrow\infty}\frac{1}{\aant{\Omega_N}}\sum\limits_{\omega \in B_N} \sys_N(\omega) = 0
$$
\end{prp}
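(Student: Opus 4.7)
The plan is to decompose $B_N$ into the three subsets corresponding to its three defining conditions, use Gromov's systolic inequality to obtain a uniform upper bound $\sys_N \leq M_N := O(\sqrt{\log N})$, and then bound each subset's probability by $O(1/\log N)$ or less. Writing
\begin{equation*}
\frac{1}{\aant{\Omega_N}}\sum_{\omega \in B_N}\sys_N(\omega) = \ExV{}{\sys_N \mathbf{1}_{B_N}} \leq M_N \cdot \Pro{}{B_N},
\end{equation*}
the whole sum is controlled by this product.

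First I would establish the uniform bound $\sys_N \leq M_N$. When $g_N(\omega) \geq 2$, Theorem \ref{thm_gromov} applied to the closed Riemann surface $S_C(\omega)$ (hyperbolic area $4\pi(g_N-1) \leq 2\pi N$) gives $\sys(S_C(\omega))^2 \leq 4\log(g_N)(1+o(1))$, hence $\sys(S_C(\omega)) \leq M_N$. In the non-compact hyperbolic setting one passes from $S_C$ to $S_O$ via Brooks' comparison Lemma \ref{lem_brooks} on the (high-probability) sub-event that cusps are long, with the residual event absorbed by the quantitative version of the Brooks--Makover cusp-length bound referenced before the statement of Theorem A. Surfaces with $g_N \in \{0,1\}$ either have no non-contractible curves (and $\sys_N$ contributes $0$ by convention) or already lie in $B_N^{(1)}$ below.

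Next, decompose $B_N = B_N^{(1)} \cup B_N^{(2)} \cup B_N^{(3)}$ according to the three defining conditions and bound each probability. For $B_N^{(1)} = \{g_N \leq N/3\}$: Theorem \ref{thm_genus} gives $\ExV{}{1+N/2-g_N} = O(\log N)$; since $1+N/2-g_N$ is non-negative and is at least $N/6$ on $B_N^{(1)}$, Markov yields $\Pro{}{B_N^{(1)}} = O(\log N/N)$. For $B_N^{(3)} = \bigcup_{2\leq k\leq C\log_2 N}G_{N,k}$: the quantitative bound from the proof of Theorem D gives $\Pro{}{B_N^{(3)}} = O((\log N)^3/N^{1-C})$. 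For $B_N^{(2)} = \{m_{\ell,N} > C\log_2 N\}$, set $k_0 = \floor{C\log_2 N}$. The event forces every $k_0$-circuit of $\Gamma(\omega)$ to be contractible on $S_C(\omega)$; by the discussion opening section \ref{sect_sepcurv}, any such circuit is either a left-hand-turn circuit or is separating on the graph, so
\begin{equation*}
\Pro{}{B_N^{(2)}} \leq \Pro{}{X_{N,k_0}=0} + \Pro{}{Z_{N,[L^{k_0}]} \geq 1} + \Pro{}{G_{N,k_0}}.
\end{equation*}
Proposition \ref{prp_ubPX1} bounds the first term by $O((\log N)^8 N^{-C\log_2(8/3)})$; the expected-value formula from the proof of Theorem B gives
\begin{equation*}
\ExV{}{Z_{N,[L^{k_0}]}} = \frac{1}{k_0}\prod_{i=0}^{k_0-1}\frac{6N-3i}{6N-2i-1} \leq \frac{2}{k_0} = O(1/\log N),
\end{equation*}
so Markov bounds the second term; Theorem D bounds the third. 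The middle term dominates, giving $\Pro{}{B_N^{(2)}} = O(1/\log N)$.

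Combining everything, $\ExV{}{\sys_N \mathbf{1}_{B_N}} \leq M_N \cdot \Pro{}{B_N} = O(\sqrt{\log N}) \cdot O(1/\log N) = O(1/\sqrt{\log N}) \to 0$. The main obstacle is the $B_N^{(2)}$ case: the event that no $k_0$-circuit is non-contractible on $S_C$ is strictly weaker than the event $\{X_{N,k_0}=0\}$ controlled by Proposition \ref{prp_ubPX1}, so one must separately rule out the pathologies that all $k_0$-circuits are left-hand-turn (handled by the Markov bound on $Z_{N,[L^{k_0}]}$) or all are graph-separating (handled by Theorem D). It is crucial that the systole is bounded by Gromov's sharp $O(\sqrt{\log N})$ rather than, say, Buser--Sarnak's $O(\log g)$: the product $O(\log N)\cdot O(1/\log N) = O(1)$ would otherwise fail to vanish.
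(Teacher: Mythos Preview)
Your decomposition of $B_N$ and the treatment of $B_N^{(1)}$ and $B_N^{(3)}$ match the paper. The handling of $B_N^{(2)}$ is also correct but slightly different: you bound $\Pro{}{Z_{N,[L^{k_0}]}\geq 1}$ by Markov on the expected value, obtaining $O(1/\log N)$; the paper instead conditions on the underlying unoriented graph and counts orientations (on any given $k_0$-circuit exactly $2$ out of $2^{k_0}$ orientations make it carry $L^{k_0}$), yielding the much sharper bound $O(N^{-C})$.

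There is, however, a genuine gap in your ``uniform'' bound $\sys_N\leq M_N=O(\sqrt{\log N})$. You obtain it from Gromov's inequality applied to the compact hyperbolic surface $S_C(\omega)$, which is fine for $g_N\geq 2$; but the proposition concerns $\sys_N$ on the \emph{non-compact} surface $S_O(\omega)$. Passing from $\sys(S_C)$ to $\sys(S_O)$ via Lemma~\ref{lem_brooks} is valid only on the long-cusp event, and on the complementary event $E_{L,N}$ you have no bound on $\sys_N$ at all --- the phrase ``absorbed by the quantitative cusp-length bound'' only controls $\Pro{}{E_{L,N}}$, not the integrand. Likewise, when $g_N=1$ the surface $S_C$ is a torus and carries no hyperbolic metric, so Gromov cannot be applied to it; saying such $\omega$ ``already lie in $B_N^{(1)}$'' is circular, since $B_N^{(1)}$ is precisely the set whose contribution you are trying to bound. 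In both cases you still need an a priori deterministic bound on $\sys(S_O)$.

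The paper sidesteps all of this: it caps each cusp of $S_O$ by a small Euclidean hemisphere to produce a closed Riemannian surface of area at most $2\pi N+\varepsilon$ whose systole dominates $\sys(S_O)$, and applies Theorem~\ref{thm_gromov} directly to that surface. This gives $\sys_N\leq A\sqrt{N}$ unconditionally, and $\sys_N=O(\log N)$ on the large-genus event $\{g_N>N/3\}$. Combined with the paper's sharper $O(N^{-C})$ bound for $B_N^{(2)}$, these cruder systole estimates already suffice; no appeal to $S_C$, to Brooks' comparison, or to the quantitative cusp-length proposition (which in the paper is actually proved \emph{after} the present proposition) is needed. Your closing remark that the sharp $O(\sqrt{\log N})$ is ``crucial'' is therefore only an artefact of your weaker $O(1/\log N)$ probability bound on $B_N^{(2)}$; the paper shows that $O(\log N)$ for the systole paired with $O(N^{-C})$ for the probability works just as well.
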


\begin{proof} Basically $B_N$ consists of $3$ subsets (with some overlap): surfaces of small genus, surfaces with a short separating curve and surfaces with large $m_\ell$. We will prove the seemingly stronger result that the restrictions of the sum to each of these subsets tend to $0$.

We start with surfaces with small genus. For these we will use Theorem \ref{thm_gromov}, Markov's inequality and Theorem \ref{thm_genus}. We have $g_N(\omega)\leq \frac{N+1}{2}$ for all $\omega\in\Omega_N$. This means that $\frac{N+1}{2}-g_N$ is a non-negative random variable and we can apply Markov's inequality to it. We have $g_N(\omega)\leq\frac{N}{3}$ if and only if $\frac{N+1}{2}-g_N(\omega)\geq \frac{N}{6}+1$. So we get:
\begin{align*}
\Pro{}{g_N\leq\frac{N}{3}} & = \Pro{}{\frac{N+1}{2}-g_N \geq \frac{N}{6}+1} \\ 
   & = \frac{\frac{N+1}{2}-\ExV{}{g_N}}{\frac{N}{6}+1}   
\end{align*}
Now we apply Theorem \ref{thm_genus}, which tells us that there exists a constant $C_1\in (0,\infty)$ such that:
\begin{align*}
\Pro{}{g_N\leq\frac{N}{3}} & \leq \frac{\frac{N+1}{2}-1-\frac{N}{2}+\left(C_1+\frac{3}{4}\log(N)\right)}{\frac{N}{6}+1}  \\
  & \leq \frac{K\log(N)}{N}
\end{align*}
for some $K\in (0,\infty)$. We want to apply Gromov's systolic inequality now. The problem is that we need a closed surface to apply this and our surface has cusps. However, we can do the following: at each cusp we cut off a horocycle neighborhood and replace it with a Euclidean hemisphere with an equator of the same length as the horocycle, as in figure \ref{pic15} below.

\begin{figure}[H] 
\begin{center} 
\includegraphics[scale=1]{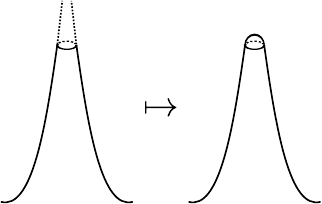} 
\caption{Cutting off the cusps}
\label{pic15}
\end{center}
\end{figure}

\noindent When we do this with all the cusps we get a compact surface with a Riemannian metric on it. If we shorten the length of the horocycle, the area of the neighborhood we cut off gets smaller and so does the area of the hemispheres we glue. Recall that the area of a random surface of $2N$ ideal hyperbolic triangles is $2\pi N$. So, given $\varepsilon > 0$, we can choose the horocycles such that the area of the resulting surface is at most $2\pi N+\varepsilon$. Furthermore, we want that the systole on the resulting surface is at least as long as the systole on the surface with cusps, so we need to be sure that the systole on the resulting surface does not pass through any of the added hemispheres. This again comes down to choosing the horocycles small enough. So if we apply Theorem \ref{thm_gromov} we get:
\begin{align*}
\frac{1}{\aant{\Omega_N}}\sum\limits_{\omega\in\Omega_N,\;g_N(\omega)\leq\frac{N}{3}} \sys_N (\omega) & \leq \frac{A}{\aant{\Omega_N}}\sum\limits_{\omega\in\Omega_N,\;g_N(\omega)\leq\frac{N}{3}}\sqrt{2\pi N+\varepsilon} \\
   & = \Pro{}{g_N\leq\frac{N}{3}}A\sqrt{2\pi N+\varepsilon} \\
   & \leq  AK\frac{\sqrt{2\pi N+\varepsilon}\log(N)}{N}
\end{align*}
for some $A\in (0,\infty)$. Note that we have only used the fact that the ratio $\frac{\sys_N (\omega)^2}{2\pi N+\varepsilon}$ is bounded and not how this bound behaves with respect to the genus of $\omega$. So we get:
\begin{equation}\label{eq_lim1}
\lim\limits_{N\rightarrow\infty}\frac{1}{\aant{\Omega_N}}\sum\limits_{\omega\in\Omega_N,\;g_N(\omega)\leq\frac{N}{3}} \sys_N (\omega) = 0
\end{equation}

The next part we treat is the set of random surfaces with short separating curves. To keep the notation simple we will write: $G_N=\bigcup\limits_{2\leq k\leq C\log_2(N)}G_{N,k}$. So $G_N$ is the set of random surfaces that contain a separating circuit with fewer than $C\log_2(N)$ edges. We have:
\begin{align*}
\frac{1}{\aant{\Omega_N}}\sum\limits_{\omega \in G_N} \sys_N(\omega) & =  \frac{1}{\aant{\Omega_N}}\sum\limits_{\omega \in G_N,\;g_N(\omega)\leq \frac{N}{3}} \sys_N(\omega) + \frac{1}{\aant{\Omega_N}}\sum\limits_{\omega \in G_N,\;g_N(\omega)>\frac{N}{3}} \sys_N(\omega) \\
& \leq \frac{1}{\aant{\Omega_N}}\sum\limits_{\omega \in \Omega_N,\;g_N(\omega)\leq \frac{N}{3}} \sys_N(\omega) + \frac{1}{\aant{\Omega_N}}\sum\limits_{\omega \in G_N,\;g_N(\omega)>\frac{N}{3}} \sys_N(\omega) 
\end{align*}
We already know that the first of these two terms tends to $0$. For the second term we will use Theorem \ref{thm_gromov} again, so we will again apply the trick of replacing the cusps with small hemispheres. This means that there exists a $K'\in(0,\infty)$ such that:
\begin{align*}
\frac{1}{\aant{\Omega_N}}\sum\limits_{\omega \in G_N,\;g_N(\omega)>\frac{N}{3}} \sys_N(\omega)  & \leq \frac{1}{\aant{\Omega_N}}\sum\limits_{\omega \in G_N,\;g_N(\omega)>\frac{N}{3}} \sup\verz{\frac{\sys(S_{g_N(\omega)},ds^2)}{\sqrt{\mathrm{area}(S_{g_N(\omega)},ds^2)}}}{ds^2}\sqrt{2\pi N+\epsilon} \\
   & \leq \frac{1}{\aant{\Omega_N}}\sum\limits_{\omega \in G_N,\;g_N(\omega)>\frac{N}{3}} \frac{K'\log(\frac{N}{3})}{\sqrt{\pi \frac{N}{3}}}\sqrt{2\pi N+\epsilon}  \\
   & = \Pro{}{\omega\in G_N}\frac{K'\log(\frac{N}{3})}{\sqrt{\pi \frac{N}{3}}}\sqrt{2\pi N+\epsilon} 
\end{align*}
From the proof of Theorem \ref{thm_sepcurves} we know that there exists an $R\in (0,\infty)$ such that $\Pro{}{\omega\in G_N}\leq\frac{R(C\log_2(N))^3}{N^{1-C}}$ for all $N\in\mathbb{N}$. So we get:
\begin{equation*}
\frac{1}{\aant{\Omega_N}}\sum\limits_{\omega \in G_N,\;g_N(\omega)>\frac{N}{3}} \sys_N(\omega) \leq  \frac{R(C\log_2(N))^3}{N^{1-C}} \frac{K'\log(\frac{N}{3})}{\sqrt{\pi \frac{N}{3}}}\sqrt{2\pi N+\epsilon}
\end{equation*}
so:
\begin{equation}\label{eq_lim2}
\lim\limits_{N\rightarrow\infty} \frac{1}{\aant{\Omega_N}}\sum\limits_{\omega \in G_N} \sys_N(\omega) = 0
\end{equation}

The last part is surfaces with large $m_\ell$. For these surfaces we can restrict to the surfaces with large genus and without short separating circuits. That is:
\begin{align*}
\lim\limits_{N\rightarrow\infty} \frac{1}{\aant{\Omega_N}}\sum\limits_{\omega \in \Omega_N,\;m_\ell(\omega)>C\log_2(N)} \sys_N(\omega) & = \lim\limits_{N\rightarrow\infty} \frac{1}{\aant{\Omega_N}}\sum\limits_{\substack{\omega \in \Omega_N-G_N,\\ m_\ell(\omega)>C\log_2(N),\;g_N(\omega > \frac{N}{3}}} \sys_N(\omega) \\
& \leq \lim\limits_{N\rightarrow\infty} \Pro{}{\substack{\omega \in \Omega_N-G_N,\\ m_\ell(\omega)>C\log_2(N),\;g_N(\omega) >\frac{N}{3}}} \frac{K'\log(\frac{N}{3})} {\sqrt{\pi \frac{N}{3}}}\sqrt{2\pi N+\epsilon}
\end{align*}
The reason we want to restrict to $\Omega_N-G_N$ is that it makes null homotopic circuits easier: if a circuit is null homotopic it is either separating or it cuts off a cusp, in which case it carries a word of type $L^k$ for some $k$. If we restrict to $\Omega_M-G_N$, the first case does not appear. This means that if the shortest non null homotopic circuit on a random surface has $k$ edges (i.e. $m_\ell=k$) then there are either no $k-1$ circuits or there are $k-1$ circuits that carry a word of the type $L^{k-1}$. So we get:
\begin{align*}
\Pro{}{\substack{\omega \in \Omega_N-G_N,\\ m_\ell(\omega)>C\log_2(N),\;g_N(\omega) >\frac{N}{3}}} & = \Pro{}{\substack{\omega \in \Omega_N-G_N,\;g_N(\omega) >\frac{N}{3}, X_{N,\floor{C\log_2(N)}}=0\\\text{or }X_{N,\floor{C\log_2(N)}}>0 \text{ and all } \\ \floor{C\log_2(N)}\text{-circuits carry words of the} \\\text{type } L^{\floor{C\log_2(N)}}}} \\ 
   & \leq \Pro{}{X_{N,\floor{C\log_2(N)}}=0} + \Pro{}{\substack{X_{N,\floor{C\log_2(N)}}>0 \text{ and all} \floor{C\log_2(N)}\text{-circuits} \\ \text{carry words of the type }  L^{\floor{C\log_2(N)}}}}
\end{align*}
For the first of the two we will use proposition \ref{prp_ubPX1}, which says that there exists a constant $D\in (0,\infty)$ such that $\Pro{}{X_{N,k}=0} \leq Dk^8 \left(\frac{3}{8}\right)^k$. This means that:
\begin{align*}
\Pro{}{X_{N,\floor{C\log_2(N)}}=0} & \leq D(\floor{C\log_2(N)})^8 \left(\frac{3}{8}\right)^{\floor{C\log_2(N)}} \\
   & \leq K''\log(N)^8 N^{C \log_2(\frac{3}{8})}
\end{align*}
Just to give an idea: $\log_2(\frac{3}{8})\approx -1.4$. For the second probability we have:
\begin{align*}
 \Pro{}{\substack{X_{N,\floor{C\log_2(N)}}>0 \text{ and all} \floor{C\log_2(N)}\text{-circuits} \\ \text{carry words of the type }  L^{\floor{C\log_2(N)}}}} & = \sum\limits_{i=1}^\infty  \Pro{}{\substack{X_{N,\floor{C\log_2(N)}}=i \text{ and all} \floor{C\log_2(N)}\text{-circuits} \\ \text{carry words of the type }  L^{\floor{C\log_2(N)}}}}
\end{align*}
The value of $X_{N,\floor{C\log_2(N)}}$ only depends on the graph and not on the orientation on the graph. Because every orientation has equal probability, we can work with the ratio of orientations on a $\floor{C\log_2(N)}$-circuit that correspond to a $L^{\floor{C\log_2(N)}}$ type word. If a graph has one $\floor{C\log_2(N)}$-circuit then this ratio is $\frac{2}{2^{\floor{C\log_2(N)}}}$ (there are $2$ words of type $L^{\floor{C\log_2(N)}}$: the word itself and $R^{\floor{C\log_2(N)}}$ and there are $2^{\floor{C\log_2(N)}}$ possible orientations on a $\floor{C\log_2(N)}$-circuit). If a graph has more than one $\floor{C\log_2(N)}$-circuit, the ratio is at most $\frac{2}{2^{\floor{C\log_2(N)}}}$, which we get from considering the ratio on just one circuit. So we get:
\begin{align*}
 \Pro{}{\substack{X_{N,\floor{C\log_2(N)}}>0 \text{ and all} \floor{C\log_2(N)}\text{-circuits} \\ \text{carry words of the type }  L^{\floor{C\log_2(N)}}}} & \leq \sum\limits_{i=1}^\infty \frac{2}{2^{\floor{C\log_2(N)}}} \Pro{}{X_{N,\floor{C\log_2(N)}}=i} \\
    & \leq \frac{4}{N^C}
\end{align*}
So we obtain:
\begin{equation*}
\frac{1}{\aant{\Omega_N}}\sum\limits_{\omega \in \Omega_N,\;m_\ell(\omega)>C\log_2(N)} \sys_N(\omega) \leq \left( K''\log(N)^8 N^{C \log_2(\frac{3}{8})} + \frac{4}{N^C}\right)\frac{K'\log(\frac{N}{3})} {\sqrt{\pi \frac{N}{3}}}\sqrt{2\pi N+\epsilon}
\end{equation*}
Hence:
\begin{equation}\label{eq_lim3}
\lim\limits_{N\rightarrow\infty} \frac{1}{\aant{\Omega_N}}\sum\limits_{\omega \in \Omega_N,\;m_\ell(\omega)>C\log_2(N)} \sys_N(\omega)  = 0
\end{equation}

When we put (\ref{eq_lim1}), (\ref{eq_lim2}) and (\ref{eq_lim3}) together we get the desired result.\end{proof}

We can now prove Theorem A in the non-compact case:

\begin{thmA1}
In the non-compact hyperbolic setting we have:
$$
\lim_{N\rightarrow\infty}\ExV{}{\sys_N} = \sum\limits_{k=3}^\infty 2\left(\prod\limits_{[w]\in\bigcup\limits_{i=3}^{k-1}A_i} \exp\left(-\frac{\aant{[w]}}{2\abs{w}}\right)\right) \left(1-\prod\limits_{[w]\in A_k}\exp\left(-\frac{\aant{[w]}}{2\abs{w}}\right)\right)\cosh^{-1}\left(\frac{k}{2}\right)
$$
\end{thmA1}

\begin{proof} Of course we will use proposition \ref{prp_badsurf}. This means that we can write:
\begin{equation*}
\lim_{N\rightarrow\infty}\ExV{}{\sys_N} = \lim\limits_{N\rightarrow\infty}  \sum\limits_{k=3}^\infty \Pro{}{\substack{Z_{N,[w]}^{\circ}(\omega)=0\;\forall [w]\in\bigcup\limits_{i=3}^{k-1}A_i\text{ and}\\ \exists [w]\in A_k\text{ such that }Z_{N,[w]}^{\circ}(\omega)>0\\ \omega\in \Omega_N-B_N}} 2\cosh^{-1}\left(\frac{k}{2}\right)
\end{equation*}
supposing the left hand side exists. We know the pointwise limits of the probabilities on the right hand side, these are the same as the ones for $\omega\in\Omega_N$, because the probability that $\omega\in B_N$ tends to $0$. We will apply the dominated convergence theorem to prove the fact that we can use those pointwise limits. This means that we need a uniform upper bound on the probabilities in the sum on the right hand side of the equation.

The point is that given the trace of the word in $L$ and $R$ corresponding to the systole, we get a lower bound on the number of edges on the circuit corresponding to the systole. Because if we want the trace of a word in $L$ and $R$ to increase we need to increase the number of letters in this word. More concretely, the distance between the midpoints on the triangle $T$ is $\log\left(\frac{3+\sqrt{5}}{2}\right)$, this means that $k\log\left(\frac{3+\sqrt{5}}{2}\right)$ is an upper bound for the hyperbolic length of a circuit with $k$ edges. It is noteworthy that the length on the surface of a curve that corresponds to the word $(LR)^k$ is also $\log\left(\frac{3+\sqrt{5}}{2}\right)2k$. This implies that, among all the words of $2k$ letters, $(LR)^k$ is `the word of greatest geodesic length'. 

This means that if the systole is $2\cosh^{-1}\left(\frac{k}{2}\right)$ then $m_\ell\geq \frac{2\cosh^{-1}\left(\frac{k}{2}\right)}{\log\left(\frac{3+\sqrt{5}}{2}\right)}$. So we get:
\begin{equation*}
\Pro{}{\substack{Z_{N,[w]}^{\circ}(\omega)=0\;\forall [w]\in\bigcup\limits_{i=3}^{k-1}A_i\text{ and}\\ \exists [w]\in A_k\text{ such that }Z_{N,[w]}^{\circ}(\omega)>0\\ \omega\in \Omega_N-B_N}}  \leq \Pro{}{\omega\in \Omega_N-B_N,\; m_\ell(\omega) \geq \frac{2\cosh^{-1}\left(\frac{k}{2}\right)}{\log\left(\frac{3+\sqrt{5}}{2}\right)}}
\end{equation*}
Now we use the fact that we can ignore surfaces with short separating curves and big $m_\ell$. That is, if $\omega\notin B_N$ and $m_\ell(\omega) \geq \frac{2\cosh^{-1}\left(\frac{k}{2}\right)}{\log\left(\frac{3+\sqrt{5}}{2}\right)}$ then either $\Gamma(\omega)$ has no circuits of $\floor{\frac{2\cosh^{-1}\left(\frac{k}{2}\right)}{\log\left(\frac{3+\sqrt{5}}{2}\right)}-1}$ edges, or circuits of this length all carry a word consisting of only $L$'s (or only $R$'s, depending on the direction in which we read the word). So we get:
\begin{align*}
\Pro{}{\substack{Z_{N,[w]}^{\circ}(\omega)=0\;\forall [w]\in\bigcup\limits_{i=3}^{k-1}A_i\text{ and}\\ \exists [w]\in A_k\text{ such that }Z_{N,[w]}^{\circ}(\omega)>0\\ \omega\in \Omega_N-B_N}} &  \leq \Pro{}{X_{N,\floor{\frac{2\cosh^{-1}\left(\frac{k}{2}\right)}{\log\left(\frac{3+\sqrt{5}}{2}\right)}-1}}=0} + 2\left(\frac{1}{2}\right)^{\floor{\frac{2\cosh^{-1}\left(\frac{k}{2}\right)}{\log\left(\frac{3+\sqrt{5}}{2}\right)}-1}} \\
   &  \leq D\left(\floor{\frac{2\cosh^{-1}\left(\frac{k}{2}\right)}{\log\left(\frac{3+\sqrt{5}}{2}\right)}-1}\right)^8\left(\frac{3}{8}\right)^{\floor{\frac{2\cosh^{-1}\left(\frac{k}{2}\right)}{\log\left(\frac{3+\sqrt{5}}{2}\right)}-1}} \\
   & \quad + 2\left(\frac{1}{2}\right)^{\floor{\frac{2\cosh^{-1}\left(\frac{k}{2}\right)}{\log\left(\frac{3+\sqrt{5}}{2}\right)}-1}}
\end{align*}
for some $D\in (0,\infty)$ independent of $N$ and $k$. We have:
\begin{equation*}
\cosh^{-1}\left(\frac{k}{2}\right)=\log\left(\frac{k}{2}+\sqrt{\frac{k^2}{4}-1}\right)\geq \log\left(\frac{k}{2}\right)
\end{equation*} 
So: 
\begin{equation*}
\floor{\frac{2\cosh^{-1}\left(\frac{k}{2}\right)}{\log\left(\frac{3+\sqrt{5}}{2}\right)}-1}\geq \frac{2\log\left(\frac{k}{2}\right)}{\log\left(\frac{3+\sqrt{5}}{2}\right)}-2=\frac{2\log\left(\frac{3}{8}\right)}{\log\left(\frac{3+\sqrt{5}}{2}\right)}\log_{\frac{3}{8}}\left(\frac{k}{2}\right)-2
\end{equation*}
and likewise: 
\begin{equation*}
\floor{\frac{2\cosh^{-1}\left(\frac{k}{2}\right)}{\log\left(\frac{3+\sqrt{5}}{2}\right)}-1}\geq \frac{2\log\left(\frac{1}{2}\right)}{\log\left(\frac{3+\sqrt{5}}{2}\right)}\log_{\frac{1}{2}}\left(\frac{k}{2}\right)-2
\end{equation*}
Hence:
\begin{equation*}
\Pro{}{\substack{Z_{N,[w]}^{\circ}(\omega)=0\;\forall [w]\in\bigcup\limits_{i=3}^{k-1}A_i\text{ and}\\ \exists [w]\in A_k\text{ such that }Z_{N,[w]}^{\circ}(\omega)>0\\ \omega\in \Omega_N-B_N}} \leq D\left(\floor{\frac{2\cosh^{-1}\left(\frac{k}{2}\right)}{\log\left(\frac{3+\sqrt{5}}{2}\right)}-1}\right)^8 \left(\frac{8}{3}\right)^2 k^{\frac{2\log\left(\frac{3}{8}\right)}{\log\left(\frac{3+\sqrt{5}}{2}\right)}} + 8 k^{\frac{2\log\left(\frac{1}{2}\right)}{\log\left(\frac{3+\sqrt{5}}{2}\right)}}
\end{equation*}
So there exist constants $D'$ and $D''$ (independent of $N$ and $k$) such that:
\begin{equation*}
\Pro{}{\substack{Z_{N,[w]}^{\circ}(\omega)=0\;\forall [w]\in\bigcup\limits_{i=3}^{k-1}A_i\text{ and}\\ \exists [w]\in A_k\text{ such that }Z_{N,[w]}^{\circ}(\omega)>0\\ \omega\in \Omega_N-B_N}}2\cosh^{-1}\left(\frac{k}{2}\right) \leq D' (\log(k))^9 k^{\frac{2\log\left(\frac{3}{8}\right)}{\log\left(\frac{3+\sqrt{5}}{2}\right)}} + D''\log(k) k^{\frac{2\log\left(\frac{1}{2}\right)}{\log\left(\frac{3+\sqrt{5}}{2}\right)}}
\end{equation*}
We have $\frac{2\log\left(\frac{3}{8}\right)}{\log\left(\frac{3+\sqrt{5}}{2}\right)} \approx -2.0$ and $\frac{2\log\left(\frac{1}{2}\right)}{\log\left(\frac{3+\sqrt{5}}{2}\right)}\approx -1.4$. So the right hand side above is a summable function. This means that we can apply the dominated convergence theorem. In combination with proposition \ref{prp_ptwiselim} this gives the desired result. \end{proof}

It will turn out that the limit of the expected value of the systole in the compact case is the same. This will follow from the fact that asymptotically the non-compact surfaces have large cusps with high probability, which implies that the metrics on the compact and non-compact surfaces are comparable. Given $L\in(0,\infty)$, we write:
\begin{equation*}
E_{L,N} = \verz{\omega\in\Omega_N}{S_O(\omega)\text{ has cusp length }<L}
\end{equation*}

In \cite{BM} (Theorem 2.1), Brooks and Makover proved that random surfaces have large cusps with probability tending to $1$ for $N\rightarrow\infty$. However, we also need to know how fast this probability tends to $1$, so we sharpen their result as follows:

\begin{prp}\label{prp_cusplength} Let $L\in (0,\infty)$. We have:
$$
\Pro{}{E_{L,N}} = \mathcal{O}(N^{-1})
$$
for $N\rightarrow\infty$.
\end{prp}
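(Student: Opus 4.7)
The plan is to sharpen the Brooks--Makover argument by quantifying it with the configuration-model machinery developed earlier in the paper.

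I would start by recalling from Section \ref{sec_GeomHyp} that the cusps of $S_O(\omega)$ are in bijection with the left-hand turn circuits on the dual cubic ribbon graph $\Gamma(\omega)$, with the number $k_i$ of triangles meeting at the cusp $C_i$ equal to the length of the corresponding left-turn circuit. The unit horocycles at the ideal vertex of each copy of the triangle $T$ assemble into a natural horocycle of length $k_i$ around $C_i$, and these natural horocycles are pairwise disjoint (tangent at the images of the side midpoints). Hence the combinatorial condition ``$k_i\geq L$ for every $i$'' is already sufficient for cusp length $\geq L$. A Theorem B-style computation of the expected number of left-turn $k$-circuits shows that it tends to $1/k$ as $N\to\infty$, so this condition by itself has probability bounded away from $1$, and we must allow horocycle extensions at the small cusps. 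Concretely, when $k_i<L$ the natural horocycle can be replaced by a deeper horocycle of length $L$ (at height $k_i/L$ in the upper-half-plane picture at $C_i$), which remains embedded and disjoint from the other chosen horocycles provided that no other small cusp sits too close combinatorially.

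This suggests the following sufficient combinatorial condition for cusp length $\geq L$: there exists $L'=L'(L)$ such that no two distinct left-hand turn circuits of length less than $L'$ share a vertex of $\Gamma(\omega)$. Geometrically this means that no triangle of the triangulation has two of its three corners identified with two distinct ``small'' cusps, so each small cusp is locally flanked by large cusps. An upper-half-plane analysis using the zero-shear midpoint gluing then shows that the length-$L$ horocycle around any small cusp, extended through the adjacent triangles as needed, meets neither itself nor the horocycles at the surrounding large cusps. The probability that this combinatorial condition fails is bounded by Markov's inequality applied to the number of pairs $(\gamma_1,\gamma_2)$ of left-turn circuits with $\aant{\gamma_1},\aant{\gamma_2}<L'$ that share a vertex. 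By the cubic ribbon structure, any two left-turn circuits meeting at a vertex automatically share the ``middle'' edge incident to that vertex, and hence also its other endpoint. So such a labeled pair covers $\aant{\gamma_1}+\aant{\gamma_2}-2$ distinct vertices and is specified by $\aant{\gamma_1}+\aant{\gamma_2}-1$ half-edge pairings. A configuration-model count in the spirit of Lemma \ref{lem_doublecirc} and the expectation computation in the proof of Theorem \ref{thm_PoissonDist} then gives, for each admissible pair of lengths $(k,k')$, an expected count of order $N^{-1}$; summing over the finitely many such $(k,k')$ yields $\Pro{}{E_{L,N}}=\mathcal{O}(N^{-1})$.

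The main obstacle is the geometric step: rigorously establishing that graph-isolation of the small cusps suffices to simultaneously realize disjoint horocycles of length $L$ at every cusp. This amounts to tracking the length-$L$ horocycle of a small cusp as it exits the triangles meeting that cusp, and showing by explicit computation in the upper half-plane, using the midpoint gluing condition, that it remains embedded and disjoint from the natural horocycles at the surrounding large cusps and from the extended horocycles at any other (necessarily distant) small cusps.
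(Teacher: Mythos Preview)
Your overall strategy matches the paper's: show that the event $E_{L,N}$ forces the appearance of one of finitely many ``bad'' labeled subgraphs, each having strictly more edges than vertices, and then bound $\Pro{}{E_{L,N}}$ by Markov's inequality applied to the expected number of such subgraphs, which is $\mathcal{O}(N^{-1})$ by the configuration-model count. The paper does exactly this, but with a broader family of bad configurations than yours: it takes $M_L$ to be the finite set of graphs consisting of \emph{any} two circuits of lengths $\ell_1,\ell_2<L$ joined by a path of length $d\leq d_{\max}(L)$, with the case $d=0$ meaning the circuits intersect.

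This is where your proposal has a genuine gap. Your combinatorial sufficient condition is that no two \emph{left-turn} circuits of length $<L'$ share a vertex, i.e.\ no triangle has two small corners. But two small cusps $C,C'$ can satisfy this and still obstruct disjoint horocycles of length $L$: take adjacent triangles $T_1,T_2$ sharing an edge whose endpoints are both large cusps, with the third corner of $T_1$ at $C$ and the third corner of $T_2$ at $C'$. The left-turn circuits for $C$ and $C'$ then pass through the distinct graph vertices $T_1,T_2$ and share no vertex, so your condition holds; yet the length-$L$ horocycles at $C$ and $C'$ can overlap in the quadrilateral $T_1\cup T_2$ once $L$ exceeds twice the smaller of the two orders. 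Enlarging $L'$ does not help here, since making more cusps ``small'' only strengthens the hypothesis you must verify, not the conclusion you draw from it. A second issue your condition misses is self-intersection of the length-$L$ horocycle at a single small cusp $C$: this is caused by a short (not necessarily left-turn) circuit near the left-turn circuit of $C$, which again falls outside your family.

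The fix is precisely what the paper does: relax the family of forbidden subgraphs to two short circuits (of any type) connected by a short path. Each such subgraph still has at least one more edge than vertices, so the expectation bound and the $\mathcal{O}(N^{-1})$ conclusion go through unchanged. Your configuration-model count for the special case $d=0$ is correct and generalizes directly; what needs adjusting is only the geometric step you flagged as the main obstacle, and the adjustment is to allow the connecting path.
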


\begin{proof} The idea of the proof is similar to that in \cite{BM}: if $S_O(\omega)$ has cusp length $< L$, that means that we cannot find non-intersecting horocyles of length $L$ around its cusps. So, there must be two circuits in $\Gamma(\omega)$ that are close. So, we must have subgraph of the form shown in figure \ref{pic32} below: two circuits of lengths $0<\ell_1< L$ and $0<\ell_2< L$ joined by a path of $0\leq d\leq d_{\max}(L)$ edges. Where $d_{\max}(L)$ is determined by the fact that if the distance $d$ between the two circuits becomes too big, it will be possible to choose horocycles of length $L$ around the corresponding cusps. Furthermore, the case $d=0$ will be interpreted as two intersecting circuits.

\begin{figure}[H] 
\begin{center} 
\includegraphics[scale=1.3]{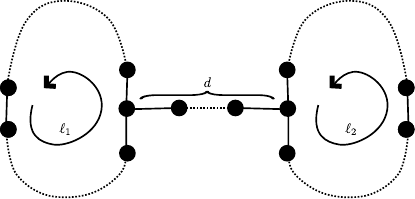} 
\caption{Two circuits joined by a path.}
\label{pic32}
\end{center}
\end{figure}

The set $M_L$ of such graphs is finite. Given $\Gamma\in M_L$, let $X_{\Gamma,N}:\Omega_N\rightarrow\mathbb{N}$ denote the random variable that counts the number of appearances of $\Gamma$ in $\Gamma(\omega)$. By Markov's inequality we have:
\begin{align*}
\Pro{}{X_{\Gamma,N}> 0} & = \Pro{}{X_{\Gamma,N}\geq \frac{1}{2}} \\
   & \leq 2\ExV{}{X_{\Gamma,N}}
\end{align*}
So:
\begin{align*}
\Pro{}{E_{L,N}} & \leq \sum\limits_{\Gamma\in M_L}\Pro{}{X_{\Gamma,N}\geq 0} \\
   & \leq 2\sum\limits_{\Gamma\in M(L)} \ExV{}{X_{\Gamma,N}}
\end{align*}
Every graph $\Gamma\in M_L$ has at least one more edge than it has vertices. This implies that $\ExV{}{X_{\Gamma,N}}=\mathcal{O}(N^{-1})$  (as in the proof of Theorem \ref{thm_PoissonDist}) for all $\Gamma\in M_L$. Because $M_L$ is finite and does not depend on $N$ we get that: 
\begin{equation*}
\Pro{}{E_{L,N}} = \mathcal{O}(N^{-1})
\end{equation*}
\end{proof}

With this and lemma \ref{lem_brooks} we can prove:

\begin{thmA2}
In the compact hyperbolic setting we have:
$$
\lim_{N\rightarrow\infty}\ExV{}{\sys_N} = \sum\limits_{k=3}^\infty 2\left(\prod\limits_{[w]\in\bigcup\limits_{i=3}^{k-1}A_i} \exp\left(-\frac{\aant{[w]}}{2\abs{w}}\right)\right) \left(1-\prod\limits_{[w]\in A_k}\exp\left(-\frac{\aant{[w]}}{2\abs{w}}\right)\right)\cosh^{-1}\left(\frac{k}{2}\right)
$$
\end{thmA2}

\begin{proof} We want to compare the compact with the non-compact setting. To make a distinction between the two, we will write $\sys_{N,C}$ for the systole in the compact setting and $\sys_{N,O}$ for the systole in the non-compact setting. Given $L\in (0,\infty)$, we will split off the set of surfaces in $\Omega_N$ that have cusp length $<L$ in the non-compact setting. 

We have:
\begin{equation*}
\ExV{}{\sys_{N,C}} = \frac{1}{\aant{\Omega_N}}\sum\limits_{\omega \in \Omega_N\backslash E_{L,N}}\sys_{N,C}(\omega)+ \frac{1}{\aant{\Omega_N}}\sum\limits_{\omega \in E_{L,N}}\sys_{N,C}(\omega)
\end{equation*}
Using Theorem \ref{thm_gromov} and proposition \ref{prp_cusplength} we obtain:
\begin{align*}
\frac{1}{\aant{\Omega_N}}\sum\limits_{\omega \in E_{L,N}}\sys_{N,C}(\omega) & \leq \Pro{}{E_{L,N}}\sqrt{2\pi N} \\
   & = \mathcal{O}(N^{-\frac{1}{2}})
\end{align*}
So:
\begin{equation*}
\lim_{N\rightarrow\infty}\ExV{}{\sys_{N,C}} = \lim_{N\rightarrow\infty}\frac{1}{\aant{\Omega_N}}\sum\limits_{\omega \in \Omega_N\backslash E_{L,N}}\sys_{N,C}(\omega)
\end{equation*}
Using lemma \ref{lem_brooks}, we get:
\begin{equation*}
\lim_{N\rightarrow\infty}\ExV{}{\sys_{N,O}}\leq \lim_{N\rightarrow\infty}\ExV{}{\sys_{N,C}} \leq (1+\delta(L))\lim_{N\rightarrow\infty}\ExV{}{\sys_{N,O}}
\end{equation*}
Because $\delta(L)\rightarrow 0$ for $L\rightarrow\infty$ and we can choose $L$ as big as we like, the two limits are actually equal.
\end{proof}

%%%%%%%%%%%%%%%%%%%%%%%%%%%%%%%%%%%%%%%%%%%%%%%%%
%		subsubsection: A numerical value
%%%%%%%%%%%%%%%%%%%%%%%%%%%%%%%%%%%%%%%%%%%%%%%%%
\subsection{A numerical value}\label{sec_numval}

Theorem A of the previous section gives us a formula for the limit of the expected value of the systole in the hyperbolic model. The problem is that the formula is rather abstract and it is not clear how to determine the sets $A_k$ for all $k=3,4,\ldots$. To get to a numerical value for the limit, we can however compute the first couple of terms (because it is not difficult to determine the sets $A_k$ for $k$ up to any finite value) and then give an upper bound for the remainder of the sum. To simplify notation we write:
\begin{equation*}
p_k=\left(\prod\limits_{[w]\in\bigcup\limits_{i=3}^{k-1}A_i} \exp\left(-\frac{\aant{[w]}}{2\abs{w}}\right)\right) \left(1-\prod\limits_{[w]\in A_k}\exp\left(-\frac{\aant{[w]}}{2\abs{w}}\right)\right)
\end{equation*}
for $k=3,4,\ldots$. So:
\begin{equation*}
\lim\limits_{N\rightarrow\infty}\ExV{}{\sys_N} = \sum\limits_{k=3}^\infty p_k 2\cosh^{-1}\left(\frac{k}{2}\right)
\end{equation*}

We have the following lemma:
\begin{lem}\label{lem_ubPsys}
Let $k,n\in \mathbb{N}$ such that $4\leq n\leq k$ then:
$$
p_k \leq \frac{p_n}{e^{k-n}\left(1-\prod\limits_{[w]\in A_n}\exp\left(-\frac{\aant{[w]}}{2\abs{w}}\right)\right)}
$$
\end{lem}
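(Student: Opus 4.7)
My plan is to simplify notation and reduce the claim to showing that $Q_i := \prod_{[w] \in A_i} \exp\bigl(-\frac{|[w]|}{2|w|}\bigr) \leq e^{-1}$ for every $i \geq 4$.

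With this notation we have
\[
p_k = \left(\prod_{i=3}^{k-1} Q_i\right)(1-Q_k),
\qquad
p_n = \left(\prod_{i=3}^{n-1} Q_i\right)(1-Q_n),
\]
so the target inequality $p_k \leq p_n / (e^{k-n}(1-Q_n))$ is equivalent to
\[
\left(\prod_{i=n}^{k-1} Q_i\right)(1-Q_k)\, e^{k-n} \leq 1.
\]
Bounding $1-Q_k \leq 1$, it therefore suffices to prove $\prod_{i=n}^{k-1} Q_i \leq e^{-(k-n)}$, and since $n \geq 4$ this will follow once I show $Q_i \leq e^{-1}$ for every $i \geq 4$, i.e.\
\[
\sum_{[w] \in A_i} \frac{|[w]|}{2|w|} \geq 1 \qquad \text{for all } i \geq 4.
\]

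To establish this lower bound I will exhibit a single equivalence class in $A_i$ whose contribution is exactly $1$. The candidate is $[L^{i-2}R]$. A direct matrix computation gives $L^{i-2}R = \bigl(\begin{smallmatrix} i-1 & i-2 \\ 1 & 1\end{smallmatrix}\bigr)$, which has trace $i$, so $[L^{i-2}R] \in A_i$. The word $L^{i-2}R$ has length $|w|=i-1$; its cyclic orbit consists of $i-1$ pairwise distinct words because the word contains a unique $R$. The flipped word (reverse and swap letters) is $LR^{i-2}$, whose cyclic orbit also has $i-1$ elements and is disjoint from the first orbit, since when $i \geq 4$ one has $i-2 \geq 2 \neq 1$ so the letter distributions ($i-2$ $L$'s vs.\ one $L$) differ. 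Hence $|[L^{i-2}R]| = 2(i-1) = 2|w|$ and $\frac{|[L^{i-2}R]|}{2|w|} = 1$, which gives the required inequality.

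Combining: for each $i \geq 4$ we have $Q_i \leq e^{-1}$, hence $\prod_{i=n}^{k-1} Q_i \leq e^{-(k-n)}$, and reversing the reduction yields the lemma. There is no real obstacle here; the only delicate point is verifying that $[L^{i-2}R]$ has maximal equivalence-class size, which is exactly why the hypothesis $n \geq 4$ (rather than $n \geq 3$) is needed, since $A_3 = \{[LR]\}$ has $[LR]$ equal to its own flip and contributes only $1/2$.
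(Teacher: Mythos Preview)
Your proof is correct and follows essentially the same approach as the paper: both arguments bound $p_k$ by dropping the factor $1-Q_k$, split the remaining product at $n$, and then show each factor $Q_i\le e^{-1}$ for $i\ge 4$ by exhibiting the class $[L^{i-2}R]\in A_i$ with $\aant{[L^{i-2}R]}=2(i-1)$. Your write-up is slightly more explicit in the reduction and in justifying why the two cyclic orbits are disjoint, but the content is the same.
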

\begin{proof} We have:
\begin{align*}
p_{k} & \leq  \left(\prod\limits_{[w]\in\bigcup\limits_{i=3}^{k-1}A_i} \exp\left(-\frac{\aant{[w]}}{2\abs{w}}\right)\right) \\
   & =  \left(\prod\limits_{[w]\in\bigcup\limits_{i=3}^{n-1}A_i} \exp\left(-\frac{\aant{[w]}}{2\abs{w}}\right)\right)  \left(\prod\limits_{[w]\in\bigcup\limits_{i=n}^{k-1}A_i} \exp\left(-\frac{\aant{[w]}}{2\abs{w}}\right)\right) \\
   & = \frac{p_n}{1-\prod\limits_{[w]\in A_n}\exp\left(-\frac{\aant{[w]}}{2\abs{w}}\right)}\left(\prod\limits_{[w]\in\bigcup\limits_{i=n}^{k-1}A_i} \exp\left(-\frac{\aant{[w]}}{2\abs{w}}\right)\right)
\end{align*}
We know that $[L^{i-2}R]\in A_i$. It is not difficult to see that $\aant{[L^{i-2}R]} = 2(i-1)$ when $i>3$. So:
\begin{align*}
\prod\limits_{[w]\in A_i} \exp\left(-\frac{\aant{[w]}}{2\abs{w}}\right) \leq \exp (-1)
\end{align*}
Because $\prod\limits_{[w]\in\bigcup\limits_{i=n}^{k-1}A_i} \exp\left(-\frac{\aant{[w]}}{2\abs{w}}\right)=\prod\limits_{i=n}^{k-1}\prod\limits_{[w]\in A_i}  \exp\left(-\frac{\aant{[w]}}{2\abs{w}}\right)$ we get:
\begin{equation*}
p_k \leq \frac{p_n}{e^{k-n}\left(1-\prod\limits_{[w]\in A_n}\exp\left(-\frac{\aant{[w]}}{2\abs{w}}\right)\right)}
\end{equation*}
which is what we wanted to prove. \end{proof}

We will write:
\begin{equation*}
S_n = \sum\limits_{k=3}^n p_k 2\cosh^{-1}\left(\frac{k}{2}\right)
\end{equation*}
for $n=3,4,\ldots$. So $S_n$ is the approximation of the limit of the expected value of the systole by the fist $n-2$ terms of the sum.

We have the following proposition:
\begin{prp}
For all $n=3,4,\ldots$ we have:
$$
S_n \leq \lim_{N\rightarrow\infty}\ExV{}{\sys_N} \leq S_n+2\frac{p_n}{1-\prod\limits_{[w]\in A_n}\exp\left(-\frac{\aant{[w]}}{2\abs{w}}\right)}\frac{\left(\frac{\log(n+1)^{\frac{1}{n+1}}}{e}\right)^{n+1}}{1-\frac{\log(n+1)^{\frac{1}{n+1}}}{e}}
$$
\end{prp}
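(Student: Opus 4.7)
The lower bound is immediate. By Theorem~A in the hyperbolic setting,
\begin{equation*}
\lim_{N\to\infty}\ExV{}{\sys_N}=\sum_{k=3}^{\infty}p_k\cdot 2\cosh^{-1}(k/2),
\end{equation*}
and every summand is non-negative, so the partial sum $S_n$ of the first $n-2$ terms is automatically a lower bound.

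For the upper bound I would estimate the tail
\begin{equation*}
T_n:=\sum_{k=n+1}^{\infty}p_k\cdot 2\cosh^{-1}(k/2)
\end{equation*}
in three steps. First, apply Lemma~\ref{lem_ubPsys}: for every $k\ge n+1$ it gives
$p_k\le p_n/\bigl(e^{k-n}(1-\prod_{[w]\in A_n}\exp(-\aant{[w]}/(2\abs{w})))\bigr)$,
which pulls the constant $p_n/(1-\prod_{[w]\in A_n}\exp(-\aant{[w]}/(2\abs{w})))$ out of the sum and leaves clean exponential decay in $k$. Second, bound the lengths: from $\cosh(y)\ge e^y/2$ one gets $\cosh^{-1}(x)\le\log(2x)$, and in particular $\cosh^{-1}(k/2)\le\log k$ for $k\ge 2$. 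After these two steps it remains to control $\sum_{k=n+1}^{\infty}\log(k)/e^{k-n}$ (equivalently, after absorbing the $e^n$ factor, $\sum_{k=n+1}^{\infty}\log(k)/e^k$).

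The third and essential step is to dominate this $\log$-weighted series by a pure geometric series. Writing $\log(k)/e^k=\bigl((\log k)^{1/k}/e\bigr)^k$, I would show that the base $k\mapsto(\log k)^{1/k}$ is eventually decreasing: computing the derivative of $\log\log(k)/k$ shows that its sign is that of $1-\log(k)\log\log(k)$, so $(\log k)^{1/k}$ is decreasing whenever $\log(k)\log\log(k)>1$, which holds for all $k$ past a small threshold. In the range of $n$ in which this estimate is useful this gives $(\log k)^{1/k}\le(\log(n+1))^{1/(n+1)}$ for every $k\ge n+1$, equivalently $\log k\le(\log(n+1))^{k/(n+1)}$. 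Setting $r:=(\log(n+1))^{1/(n+1)}/e<1$, each summand is then dominated by $r^k$, and
\begin{equation*}
\sum_{k=n+1}^{\infty}r^k=\frac{r^{n+1}}{1-r}=\frac{\bigl((\log(n+1))^{1/(n+1)}/e\bigr)^{n+1}}{1-(\log(n+1))^{1/(n+1)}/e}
\end{equation*}
is precisely the geometric-series factor appearing in the statement. Multiplying by the prefactor $2p_n/(1-\prod_{[w]\in A_n}\exp(-\aant{[w]}/(2\abs{w})))$ coming from step one yields the claimed tail bound and therefore the full upper bound.

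\paragraph{Main obstacle.}
The delicate point is the third step: showing that the monotonicity of $(\log k)^{1/k}$ has already taken effect by $k=n+1$ in the range of $n$ for which the bound is claimed, and then carefully tracking the interplay between the exponential decay $e^{-(k-n)}$ from Lemma~\ref{lem_ubPsys} and the natural $e^{-k}$ scaling of the geometric-series argument so that the constants line up with those in the stated remainder.
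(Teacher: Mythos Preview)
Your proposal is correct and follows essentially the same route as the paper's proof: apply Lemma~\ref{lem_ubPsys} to extract the factor $p_n/(1-\prod_{[w]\in A_n}e^{-\aant{[w]}/(2\abs{w})})$ together with the decay $e^{-(k-n)}$, bound $\cosh^{-1}(k/2)\le\log k$, and then dominate $\sum_{k\ge n+1}\log(k)/e^k$ by the geometric series with ratio $(\log(n+1))^{1/(n+1)}/e$. You actually supply \emph{more} justification than the paper does for the last step: the paper simply writes the inequality $\log(k)/e^k\le\bigl((\log(n+1))^{1/(n+1)}/e\bigr)^k$ without comment, whereas you correctly identify that it rests on the eventual monotonicity of $k\mapsto(\log k)^{1/k}$ and flag the small-$n$ threshold as the delicate point. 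Your remark about tracking the $e^{-(k-n)}$ versus $e^{-k}$ bookkeeping is also on target; the paper carries an explicit $e^n$ prefactor through the computation before it cancels against the $e^{-(n+1)}$ coming from the geometric sum.
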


\begin{proof} The inequality on the left hand side is trivial, so we will forcus on the inequality on the right hand side. We have:
\begin{align*}
\lim_{N\rightarrow\infty}\ExV{}{\sys_N} - S_n & = \sum\limits_{k=n+1}^\infty 2p_k\cosh^{-1}\left(\frac{k}{2}\right) \\
  & \leq \sum\limits_{k=n+1}^\infty 2p_k\log(k)
\end{align*}
Now we use lemma \ref{lem_ubPsys} and we get:
\begin{align*}
\lim_{N\rightarrow\infty}\ExV{}{\sys_N} - S_n & \leq  \frac{2e^n p_n}{1-\prod\limits_{[w]\in A_n}\exp\left(-\frac{\aant{[w]}}{2\abs{w}}\right)} \sum\limits_{k=n+1}^\infty \frac{\log(k)}{e^{k}} \\
   & \leq \frac{2e^n p_n}{1-\prod\limits_{[w]\in A_n}\exp\left(-\frac{\aant{[w]}}{2\abs{w}}\right)} \sum\limits_{k=n+1}^\infty \left(\frac{\log(n+1)^{\frac{1}{n+1}}}{e}\right)^k \\
   & =  \frac{2e^n p_n}{1-\prod\limits_{[w]\in A_n}\exp\left(-\frac{\aant{[w]}}{2\abs{w}}\right)} \frac{\left(\frac{\log(n+1)^{\frac{1}{n+1}}}{e}\right)^{n+1}}{1-\frac{\log(n+1)^{\frac{1}{n+1}}}{e}}
\end{align*}
which proves the proposition. \end{proof}

So now approximating $\lim\limits_{N\rightarrow\infty}\ExV{}{\sys_N}$ is just a matter of filling in the proposition above. For instance with $n=7$ we obtain:
\begin{equation}\label{eq_numHyp}
2.48432 \leq \lim_{N\rightarrow\infty}\ExV{}{\sys_N} \leq 2.48434
\end{equation}

%%%%%%%%%%%%%%%%%%%%%%%%%%%%%%%%%%%%%%%%%%%%%%%%%
%		S E C T I O N 6: The systole in the Riemannian model
%%%%%%%%%%%%%%%%%%%%%%%%%%%%%%%%%%%%%%%%%%%%%%%%%
\section{The systole in the Riemannian model}

In the Riemannian case we are only able to give an upper bound for the limit of the expected value of the systole. Essentially, the idea is to compute the limit of the expected value of $m_\ell$ which (up to a factor) gives an upper bound for the limit of the expected value of the systole. In section \ref{sec_Pdist} we will compute the limits of the probabilities $\Pro{}{m_{\ell,N}=k}$ for all $k\in\mathbb{N}$ and in section \ref{sec_RiemSys} we will show how we can use these limits to give an upper bound for the limit of the expected value of the systole.

%%%%%%%%%%%%%%%%%%%%%%%%%%%%%%%%%%%%%%%%%%%%%%%%%
%		subsubsection: The shortest non-trivial curve on the graph
%%%%%%%%%%%%%%%%%%%%%%%%%%%%%%%%%%%%%%%%%%%%%%%%%
\subsection{The shortest non-trivial curve on the graph}\label{sec_Pdist}

The goal of this section will to compute the following probability:
\begin{equation*}
\Pro{}{m_\ell =k}=\lim\limits_{N\rightarrow\infty}\Pro{}{m_{\ell,N}=k}
\end{equation*}

The idea behind the computation is again to split the probability space $\Omega_N$ up into two subsets. In this case means we will split off the surfaces with short non-trivial curves that are separating and surfaces with pairs of intersecting short non-trivial curves. So we define the following set:
\begin{equation*}
H_{N,k}=\verz{\omega\in\Omega_N}{\omega\text{ contains two intersecting circuits both with }\leq k\text{ edges}}
\end{equation*}
We have the following lemma about this set:
\begin{lem}\label{lem_Intersect}
$$
\lim\limits_{N\rightarrow\infty}\Pro{}{H_{N,k}} = 0
$$
\end{lem}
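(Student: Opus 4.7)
The plan is to bound $\Pro{}{H_{N,k}}$ via a first-moment argument applied to the random variable $Y_{N,k}:\Omega_N \to \mathbb{N}$ that counts the number of (ordered) pairs of distinct intersecting circuits in $\Gamma(\omega)$ each of length at most $k$. Since $H_{N,k} = \{Y_{N,k} \geq 1\}$, Markov's inequality gives $\Pro{}{H_{N,k}} \leq \ExV{}{Y_{N,k}}$, and it suffices to show $\ExV{}{Y_{N,k}} = \mathcal{O}(N^{-1})$.

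The key structural observation is that, because $\Gamma(\omega)$ is cubic, two distinct circuits that share a vertex are forced to share at least one edge: a vertex of degree $3$ cannot accommodate two circuits using $2+2 = 4$ half-edges. Thus every ordered pair of intersecting circuits counted by $Y_{N,k}$ forms a subgraph of the type considered in Lemma \ref{lem_doublecirc}, that is, an $(i,j,\ell)$ double circuit with $1 \leq \ell \leq k$, $1 \leq i \leq \ell$ and $1 \leq j \leq \lfloor i/2 \rfloor$. Such a subgraph has $2\ell - i$ vertices and $2\ell - i + j$ edges, so the excess of edges over vertices equals $j \geq 1$.

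I would now rerun the counting argument from the proof of Theorem \ref{thm_PoissonDist}: for a fixed subgraph $H$ on $v$ vertices and $e$ edges, the number of labelings of $H$ as a sequence of pairs of half-edges is $\mathcal{O}(N^v)$ and the probability that any given labeling is realized in a random partition is $\mathcal{O}(N^{-e})$, so the expected number of copies of $H$ in $\Gamma(\omega)$ is $\mathcal{O}(N^{v-e})$. Applied to double circuits with $e - v = j \geq 1$, each contributes $\mathcal{O}(N^{-1})$ to the expectation. Since the set of isomorphism classes of $(i,j,\ell)$ double circuits with $\ell \leq k$ is finite (for instance, bounded by the count in Lemma \ref{lem_doublecirc}), summing over this finite set preserves the $\mathcal{O}(N^{-1})$ bound and yields $\ExV{}{Y_{N,k}} = \mathcal{O}(N^{-1})$, as required.

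The main technical point — and the only place where care is needed — is handling the ``degenerate'' case where the two circuits have short shared pieces, since then one must verify that the relevant probability factors $(6N-1)(6N-3)\cdots$ in the denominator still give the expected polynomial behaviour; this is essentially the same bookkeeping that appears in the treatment of $Y''_{N,k}$ in the proof of Proposition \ref{prp_ubPX1}, so no genuinely new obstacle arises. Everything else is a direct adaptation of arguments already present in the paper.
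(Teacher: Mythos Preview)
Your proposal is correct and follows essentially the same approach as the paper: define the counting variable $Y_{N,k}$, apply Markov's inequality, and observe that the expected number of pairs of intersecting short circuits is $\mathcal{O}(N^{-1})$ because the union of two intersecting circuits in a cubic graph has at least one more edge than it has vertices. The paper's own proof is in fact terser than yours---it simply cites the proof of Theorem~\ref{thm_PoissonDist} in \cite{Bol1} and the treatment of $Y''_{N,k}$ in Proposition~\ref{prp_ubPX1} for the $\mathcal{O}(N^{-1})$ bound, whereas you spell out the vertex/edge-excess argument explicitly.

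One minor imprecision: Lemma~\ref{lem_doublecirc} as stated treats two circuits of the \emph{same} length $k$, while $H_{N,k}$ allows the two intersecting circuits to have different lengths $\ell_1,\ell_2\leq k$. This does not affect your argument at all---the union still has $\ell_1+\ell_2-i$ vertices and $\ell_1+\ell_2-i+j$ edges with $j\geq 1$, and the set of isomorphism classes is still finite for fixed $k$---but strictly speaking you should say ``a subgraph of the type considered in Lemma~\ref{lem_doublecirc}, generalized to allow the two circuits to have possibly different lengths $\ell_1,\ell_2\leq k$.''
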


\begin{proof} This proof is basically a part of the proof in \cite{Bol1} of Theorem \ref{thm_PoissonDist}.

Let $Y_{N,k}:\Omega_N\rightarrow\mathbb{N}$ be the random variable that counts the number of distinct pairs of intersecting circuits of length at most $k$. So:
\begin{equation*}
H_{N,k} = \verz{\omega\in\Omega_N}{Y_{N,k}(\omega)\geq 1}
\end{equation*}
So Markov's inequality implies that:
\begin{equation*}
\Pro{}{H_{N,k}} \leq \ExV{}{Y_{N,k}}
\end{equation*}
In the proof of Theorem \ref{thm_PoissonDist} in \cite{Bol1} (and in the special case of two circuits of the same length in the proof of proposition \ref{prp_ubPX1}) it is proved that $\ExV{}{Y_{N,k}}$ is $\mathcal{O}(N^{-1})$ for $N\rightarrow\infty$. \end{proof}

\begin{prp}\label{prp_mell} For all $k\in\mathbb{N}$ we have:
$$
\Pro{}{m_\ell=k} = e^{-\sum\limits_{j=1}^{k-1}\frac{2^{j-1}-1}{j}} -e^{-\sum\limits_{j=1}^k\frac{2^{j-1}-1}{j}}
$$
\end{prp}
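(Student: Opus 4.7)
The plan is to rewrite the event $\{m_{\ell,N}=k\}$ asymptotically as the event that the graph has no non-cusp-bounding circuit of length less than $k$ but has one of length exactly $k$, and then apply Theorem B to the joint counts of circuits carrying each equivalence class of words of length at most $k$.

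First I would argue that a contractible circuit $\gamma$ of length $\leq k$ on $\Gamma(\omega)$ must bound a single face of the embedded graph, and hence carry a word in $[L^j]$ for some $j\leq k$. Since faces of the embedded graph correspond to corners of the triangulation (left-hand turn circuits), any contractible circuit bounds a union of faces; a disk consisting of more than one face forces $\gamma$ to be a graph-separating circuit of length $\leq k$, and by Theorem D the probability of containing such a circuit tends to $0$. Using Lemma \ref{lem_Intersect} one may further assume that short circuits are pairwise disjoint and simple. Let $E_{N,k}$ be the ``bad'' set of partitions where either of these reductions fails; then $\Pro{}{E_{N,k}}\to 0$.

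On $\Omega_N\setminus E_{N,k}$, the number of non-contractible $j$-circuits equals
\begin{equation*}
X_{N,j}^{\mathrm{nc}}=\sum_{[w]\in\mathcal{A}_j\setminus\{[L^j]\}}Z_{N,[w]},
\end{equation*}
where $\mathcal{A}_j$ denotes the set of equivalence classes of words of length $j$. Applying Theorem B to the finite set $W=\bigcup_{j=1}^k\mathcal{A}_j$, the variables $\{Z_{N,[w]}\}_{[w]\in W}$ converge jointly to independent Poisson variables with means $\aant{[w]}/(2\abs{w})$. Since $\sum_{[w]\in\mathcal{A}_j}\aant{[w]}=2^j$ and $\aant{[L^j]}=2$, the sum of Poisson parameters over $\mathcal{A}_j\setminus\{[L^j]\}$ equals $(2^j-2)/(2j)=(2^{j-1}-1)/j$. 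Hence the $X_{N,j}^{\mathrm{nc}}$ converge jointly, with independence across $j$, to Poisson variables $X_j^{\mathrm{nc}}$ with these means.

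Finally, $\{m_{\ell,N}=k\}$ agrees asymptotically with $\bigcap_{j<k}\{X_{N,j}^{\mathrm{nc}}=0\}\cap\{X_{N,k}^{\mathrm{nc}}>0\}$, whose probability in the limit is
\begin{equation*}
\prod_{j=1}^{k-1}e^{-(2^{j-1}-1)/j}\bigl(1-e^{-(2^{k-1}-1)/k}\bigr)=e^{-\sum_{j=1}^{k-1}(2^{j-1}-1)/j}-e^{-\sum_{j=1}^{k}(2^{j-1}-1)/j},
\end{equation*}
as required. The main obstacle will be the topological reduction step, namely verifying carefully that on graphs without short graph-separating circuits, every contractible short circuit must bound a single face. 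Once that is in place the conclusion is a direct consequence of Theorem B together with the elementary arithmetic of the Poisson means.
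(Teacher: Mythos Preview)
Your argument is correct and in fact somewhat cleaner than the paper's proof. The paper does not invoke Theorem~B here; instead it removes $H_{N,k}$ and the short separating circuits, conditions on the vector $(X_{N,1},\ldots,X_{N,k})=(i_1,\ldots,i_k)$ of \emph{total} circuit counts, and then uses the disjointness coming from $\Omega_N\setminus H_{N,k}$ to argue that the orientations on the $i_j$ circuits of each length are independent uniform, so that the conditional probability of ``all $j$-circuits with $j<k$ are left-hand turn and at least one $k$-circuit is not'' equals $(1-2^{i_k(1-k)})\prod_{j<k}2^{i_j(1-j)}$. The resulting multi-sum is handled with Theorem~\ref{thm_PoissonDist} for the pointwise limits and Lemma~\ref{lem_ubPX2} for dominated convergence, and only then do the Poisson sums collapse to the closed form. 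Your route short-circuits all of this: since Theorem~B already gives the joint Poisson limit of the $Z_{N,[w]}$ for $[w]$ of length $\leq k$, the non-contractible $j$-circuit count is a finite sum of these and hence Poisson with the mean you compute, and the event in question is a cylinder event in finitely many coordinates, so no dominated convergence is needed. In particular, your appeal to Lemma~\ref{lem_Intersect} is superfluous: the equality $X_{N,j}^{\mathrm{nc}}=\sum_{[w]\in\mathcal{A}_j\setminus\{[L^j]\}}Z_{N,[w]}$ only requires that contractible short circuits coincide with left-hand-turn circuits, for which Theorem~D (plus $g_N\geq 1$ asymptotically almost surely) suffices; disjointness of short circuits plays no role in your decomposition, whereas it is essential to the paper's conditional-orientation computation.

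Your flagged ``main obstacle'' is the right thing to check, and it is exactly the dichotomy the paper states at the start of Section~\ref{sect_sepcurv} and invokes again inside the proof of this proposition: a circuit contractible on $S_C(\omega)$ bounds a disk $D$; if $\mathrm{int}(D)\cap\Gamma=\emptyset$ then $D$ is a single face and $\gamma$ is a left-hand-turn circuit, while if $\mathrm{int}(D)\cap\Gamma\neq\emptyset$ then (since each vertex on $\gamma$ has a unique third half-edge going to exactly one side of $\gamma$, and for $g_N\geq 1$ the complement of $D$ cannot be a single face) deleting the edges of $\gamma$ disconnects $\Gamma$. So on $\Omega_N\setminus\bigcup_{i\leq k}G_{N,i}$ with $g_N\geq 1$, contractible short circuits are precisely those carrying $[L^j]$, which is what you need.
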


\begin{proof} First of all we have:
\begin{align*}
\Pro{}{m_\ell=k} =  \lim\limits_{N\rightarrow\infty} \Pro{}{m_{\ell,N}=k,\; \omega\in \Omega_N-H_{N,k}} +\Pro{}{m_{\ell,N}=k,\; \omega\in H_{N,k}}
\end{align*}
Because $\Pro{}{m_{\ell,N}=k,\; \omega\in H_{N,k}} \leq \Pro{}{H_{N,k}}$, lemma \ref{lem_Intersect} tells us that:
\begin{equation*}
\Pro{}{m_\ell=k} = \lim\limits_{N\rightarrow\infty}\Pro{}{m_{\ell,N}=k,\; \omega\in \Omega_N-H_{N,k}}
\end{equation*}
With a similar argument and Theorem D from section \ref{sect_sepcurv} we can also exclude separating circuits. Recall that $G_{N,i}$ denotes the set of partitions $\omega\in\Omega_N$ such that $\Gamma(\omega)$ has a separating circuit of $i$ edges. We have:
\begin{align*}
\Pro{}{m_\ell=k} & =  \lim\limits_{N\rightarrow\infty} \left(\Pro{}{m_{\ell,N}=k,\; \omega\in \Omega_N-H_{N,k}-\bigcup\limits_{i=2}^kG_{N,i}}\right. \notag\\ 
                 & \quad   + \left. \Pro{}{m_{\ell,N}=k,\; \omega\in \bigcup\limits_{i=2}^kG_{N,i}-H_{N,k}}\right) \\
                 & =  \lim\limits_{N\rightarrow\infty} \Pro{}{m_{\ell,N}=k,\; \omega\in \Omega_N-H_{N,k}-\bigcup\limits_{i=2}^kG_{N,i}} 
\end{align*}
So, in our computation we only need to consider non-separating curves that do not intersect each other. Recall that the random variable $X_{N,j}$ counts the number of circuits of length $j$ on elements of $\Omega_N$. We split the probability above up into a sum over the possible values of $X_{N,j}$:
\begin{equation*}
\Pro{}{m_\ell=k} = \lim\limits_{N\rightarrow\infty} \sum\limits_{i_1,\ldots,i_{k-1}=0}^\infty\sum\limits_{i_k=1}^\infty \Pro{}{\substack{m_{\ell,N}=k,\;\omega\in \Omega_N-H_{N,k}-\bigcup\limits_{i=2}^kG_{N,i}\\
X_{N,j}(\omega)=i_j \text{ for } 1\leq j\leq k.
}}
\end{equation*}

Because a surface in the Riemannian setting still induces an orientation on the corresponding graph, we can still assign words in $L$ and $R$ to circuits on the graph. These words no longer have a geometric meaning, but they still tell us whether or not a circuit turns around a corner on the surface. In fact, a non-separating curve is (non-)trivial if and only if the word on the corresponding curve on the graph is (un)equal to $L^j$ or $R^j$, where $j$ is the length of this curve. So if $\omega\in \Omega_N-H_{N,k}-\bigcup\limits_{i=2}^kG_{N,i}$ then the condition that $m_{\ell,N}(\omega)=k$ is equivalent to: all circuits $\gamma$ on $\Gamma(\omega)$ of less than $k$ edges carry a word equivalent to $L^j$ where $j$ is the number of edges of $\gamma$ and there is at least one circuit of $k$ edges on $\Gamma(\omega)$ that carries a word that is unequivalent to $L^k$.
 
Furthermore, we observe that if a graph has no intersecting curves of length less than $k$, the words of these curves are independent: any combination of words on the curves is possible and equally probable. This means that we can just count the fraction of surfaces with the `right words' on short curves. If $X_{N,j}=i_j$ for $j=1,\ldots k$, this fraction is: $\left(1-\frac{2^{i_k}}{2^{i_kk}}\right)\left(\prod\limits_{j=1}^{k-1}\frac{2^{i_j}}{2^{i_jj}}\right)$. So:
\begin{align*}
\Pro{}{m_\ell=k} & = \lim\limits_{N\rightarrow\infty} \sum\limits_{\substack{i_1,\ldots,\\ i_{k-1}=0,\\ i_k=1}}^\infty \Pro{}{\substack{\omega\in \Omega_N-H_N^k-\bigcup\limits_{i=2}^kG_N^i\\ X_N^j(\omega)=i_j}}\left(1-\frac{2^{i_k}}{2^{i_kk}}\right)\left(\prod\limits_{j=1}^{k-1}\frac{2^{i_j}}{2^{i_jj}}\right) 
\end{align*}
We will now use the fact (i.e. Theorem \ref{thm_PoissonDist}) that the random variables $X_{N,j}$ converge to Poisson distributions in the $\sup$-norm on $\mathbb{N}$. Even though this theorem is about the convergence on the entire probability space $\Omega_N$, the fact that $\Pro{}{H_N^k\cup\bigcup\limits_{i=2}^kG_N^i}\rightarrow 0$ for $N\rightarrow\infty$ tells us that the limit of $ \Pro{}{\substack{\omega\in \Omega_N-H_N^k-\bigcup\limits_{i=2}^kG_N^i\\ X_N^j(\omega)=i_j}}$ is the same as that of $\Pro{}{\substack{\omega\in \Omega_N, X_N^j(\omega)=i_j}}$ for $N\rightarrow\infty$.

We also need to prove that we can actually use these limits. For this we have lemma \ref{lem_ubPX2} in combination with the dominated convergence theorem. Given $i_1\in\mathbb{N}$ we have:
\begin{align*}
\sum\limits_{\substack{i_2,\ldots,\\ i_{k-1}=0,\\ i_k=1}}^\infty \Pro{}{\substack{\omega\in \Omega_N-H_N^k-\bigcup\limits_{i=2}^kG_N^i\\ X_N^j(\omega)=i_j}}\left(1-\frac{2^{i_k}}{2^{i_kk}}\right)\left(\prod\limits_{j=1}^{k-1}\frac{2^{i_j}}{2^{i_jj}}\right) & \leq \Pro{}{X_N,1=i_1} \\
   & \leq \frac{C_1}{i_1^2}
\end{align*}
for some $C_1\in (0,\infty)$ independent of $i_1$. This is a summable function, so we have:
\begin{align*}
\Pro{}{m_\ell=k} & = \sum\limits_{i_1=0}^\infty\lim\limits_{N\rightarrow\infty} \sum\limits_{\substack{i_2,\ldots,\\ i_{k-1}=0,\\ i_k=1}}^\infty \Pro{}{\substack{\omega\in \Omega_N-H_N^k-\bigcup\limits_{i=2}^kG_N^i\\ X_N^j(\omega)=i_j}}\left(1-\frac{2^{i_k}}{2^{i_kk}}\right)\left(\prod\limits_{j=1}^{k-1}\frac{2^{i_j}}{2^{i_jj}}\right) 
\end{align*}
We can apply this trick $k$ times and we get:
\begin{align*}
\Pro{}{m_\ell=k} & = \sum\limits_{\substack{i_1,\ldots,\\ i_{k-1}=0,\\ i_k=1}}^\infty \lim\limits_{N\rightarrow\infty} \Pro{}{\substack{\omega\in \Omega_N-H_N^k-\bigcup\limits_{i=2}^kG_N^i\\ X_N^j(\omega)=i_j}}\left(1-\frac{2^{i_k}}{2^{i_kk}}\right)\left(\prod\limits_{j=1}^{k-1}\frac{2^{i_j}}{2^{i_jj}}\right) 
\end{align*}
This means that:
\begin{align*}
\Pro{}{m_\ell=k} = \sum\limits_{i_1,\ldots,i_{k-1}=0}^\infty\sum\limits_{i_k=1}^\infty \frac{\left(\frac{2^j}{2j}\right)^{i_j}e^{-\frac{2^j}{2j}}}{i_j!}\left(1-\frac{2^{i_k}}{2^{i_kk}}\right)\left(\prod\limits_{j=1}^{k-1}\frac{2^{i_j}}{2^{i_jj}}\right)
\end{align*}
For $1\leq j<k$ we compute:
\begin{align*}
\sum\limits_{i_j=0}^\infty \frac{\left(\frac{2^j}{2j}\right)^{i_j}e^{-\frac{2^j}{2j}}}{i_j!} \frac{2^{i_j}}{2^{i_jj}} & = e^{-\frac{2^j}{2j}}\sum\limits_{i_j=0}^\infty \frac{1}{i_j!} \frac{1}{j^{i_j}} \\
	& = e^{-\frac{2^{j-1}-1}{j}}
\end{align*}
and for $j=k$ we have:
\begin{align*}
\sum\limits_{i_k=1}^\infty \frac{\left(\frac{2^k}{2k}\right)^{i_k}e^{-\frac{2^k}{2k}}}{i_k!} \left(1-\frac{2^{i_k}}{2^{i_kk}}\right) & = e^{-\frac{2^k}{2k}}\sum\limits_{i_k=1}^\infty \frac{1}{i_k!} \left(\left(\frac{2^k}{2k}\right)^{i_k}-\frac{1}{k^{i_k}}\right) \\
	& = e^{-\frac{2^k}{2k}}\left(e^{\frac{2^k}{2k}}-1-e^{\frac{1}{k}}+1\right) \\
	& = 1-e^{-\frac{2^{k-1}-1}{k}}	
\end{align*}
So we get:
\begin{align*}
\Pro{}{m_\ell=k} & = \left(1-e^{-\frac{2^{k-1}-1}{k}}\right)\prod\limits_{j=1}^{k-1}e^{-\frac{2^{j-1}-1}{j}} \\
                              & =  e^{-\sum\limits_{j=1}^{k-1}\frac{2^{j-1}-1}{j}} -e^{-\sum\limits_{j=1}^k\frac{2^{j-1}-1}{j}}
\end{align*}
\end{proof}

%%%%%%%%%%%%%%%%%%%%%%%%%%%%%%%%%%%%%%%%%%%%%%%%%
%		subsubsection: The systole
%%%%%%%%%%%%%%%%%%%%%%%%%%%%%%%%%%%%%%%%%%%%%%%%%
\subsection{The systole}\label{sec_RiemSys}

To prove an upper bound on the limit of the expected value of the systole in the Riemannian model we proceed in the same way as in the hyperbolic model: we start by showing that we can ignore a certain set of surfaces in our computation and after that we will use dominated convergence to prove a formula for what remains. 
\begin{prp}\label{prp_badsurfEucl} In the Riemannian model we have:
$$
\lim_{N\rightarrow\infty}\frac{1}{\aant{\Omega_N}}\sum\limits_{\omega \in B_N} \sys_N(\omega) = 0
$$
\end{prp}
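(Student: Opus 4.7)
The plan is to carry over the three-part structure of the proof of Proposition \ref{prp_badsurf}, taking advantage of two simplifications specific to the Riemannian model: the total area of the random surface is fixed at $2N\cdot\mathrm{area}(\Delta,d)$, so no surgery on cusps is required; and the resulting surface is a Riemannian manifold away from the finitely many cone points coming from the corners of the triangles, which does not prevent us from applying Gromov's inequality (Theorem \ref{thm_gromov}) after an arbitrarily small smoothing at each cone point.

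First I would split $B_N$ into the three overlapping subsets
\[
A_N^{\mathrm{g}}=\verz{\omega\in\Omega_N}{g_N(\omega)\leq N/3},\quad A_N^{\mathrm{sep}}=\bigcup_{2\leq k\leq C\log_2 N} G_{N,k},\quad A_N^{\mathrm{m}}=\verz{\omega\in\Omega_N}{m_{\ell,N}(\omega)>C\log_2 N},
\]
and show separately that each contributes a vanishing quantity to the normalised sum. For the uniform systole bound, Gromov's inequality applied to the (smoothed) random Riemannian surface gives
\[
\sys_N(\omega)^2 \leq \bigl(1+o(1)\bigr)\frac{\log g_N(\omega)}{\pi\, g_N(\omega)}\cdot 2N\cdot\mathrm{area}(\Delta,d),
\]
yielding both $\sys_N\leq K_1\sqrt{N}$ universally and $\sys_N\leq K_2\sqrt{\log N}$ whenever $g_N>N/3$, with constants $K_1,K_2$ depending only on $d$.

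With these two uniform bounds in hand, the three cases follow the estimates of the hyperbolic proof almost verbatim. For $A_N^{\mathrm{g}}$, Markov's inequality applied to the non-negative random variable $\tfrac{N+1}{2}-g_N$ combined with Theorem \ref{thm_genus} yields $\Pro{}{A_N^{\mathrm{g}}}=O(\log N/N)$, so the corresponding sum is $O(\sqrt{N}\cdot \log N/N)\to 0$. For $A_N^{\mathrm{sep}}\setminus A_N^{\mathrm{g}}$, Theorem \ref{thm_sepcurves} gives $\Pro{}{A_N^{\mathrm{sep}}}=O((\log N)^3/N^{1-C})$, which multiplied by the sharper bound $\sqrt{\log N}$ is $o(1)$ since $C<1$. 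Finally, on $A_N^{\mathrm{m}}\setminus(A_N^{\mathrm{g}}\cup A_N^{\mathrm{sep}})$, the absence of short separating circuits forces any short null-homotopic circuit to encircle a single corner and hence to carry a word of the form $L^k$ or $R^k$; thus $m_{\ell,N}>C\log_2 N$ implies either $X_{N,\lfloor C\log_2 N\rfloor}=0$ or that every $\lfloor C\log_2 N\rfloor$-circuit carries such a word. Proposition \ref{prp_ubPX1} bounds the former probability by $O((\log N)^8 N^{C\log_2(3/8)})$, and the orientation-symmetry argument from the hyperbolic proof, combined with Lemma \ref{lem_ubPX2}, bounds the latter by $O(N^{-C})$; multiplying by the $\sqrt{\log N}$ systole bound again yields a negligible contribution.

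The main technical point, and the only place where the Riemannian model genuinely differs from the hyperbolic one, is verifying that Gromov's systolic inequality remains valid for our piecewise-Riemannian surfaces with isolated conical singularities, and that the interpretation of ``non-contractible'' for $L^k$-type circuits carries over: in the Riemannian setting such a circuit bounds a topological disk containing a single corner (a point of the surface), so it is null-homotopic, exactly paralleling the cusp-encircling case in the hyperbolic model. Both points are routine, so no genuinely new ideas beyond those in the proof of Proposition \ref{prp_badsurf} are required.
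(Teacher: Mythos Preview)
Your proposal is correct and follows essentially the same approach as the paper: the paper's own proof is a single sentence stating that the argument is identical to that of Proposition~\ref{prp_badsurf} with the area constant $2\pi N$ replaced by $2N\cdot\mathrm{area}(\Delta,d)$, and you have spelled out precisely this transfer. Your observation that no cusp surgery is needed and that Gromov's inequality applies after smoothing the cone points is exactly the right adaptation; the reference to Lemma~\ref{lem_ubPX2} in the third case is unnecessary (the sum of the $\Pro{}{X_{N,k}=i}$ over $i$ is already $1$), but this is harmless.
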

\begin{proof} The proof of this proposition is identical to that of proposition \ref{prp_badsurf}, except that we use different constants here: the area of a random surface with $2N$ triangles is $2N\cdot\mathrm{area}(\Delta,d)$ instead of $2\pi N$.
\end{proof}

Using this proposition we can prove the following theorem:

\begin{thmC}
In the Riemannian we have:
$$
m_1(d)\leq \liminf_{N\rightarrow\infty}\ExV{}{\sys_N}
$$
and
$$
\limsup_{N\rightarrow\infty}\ExV{}{\sys_N} \leq m_2(d)\sum_{k=2}^\infty k\left(e^{-\sum\limits_{j=1}^{k-1}\frac{2^{j-1}-1}{j}} -e^{-\sum\limits_{j=1}^k\frac{2^{j-1}-1}{j}}\right)
$$
\end{thmC}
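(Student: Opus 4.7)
For the lower bound, the plan is to show that, with probability tending to $1$, every homotopically non-trivial closed curve $\gamma$ on a random surface has length at least $m_1(d)$. Generically, any two triangles glued along a single shared side form a topological disk, so $\gamma$ cannot be contained in such a neighborhood. Consequently $\gamma$ must contain a sub-arc which enters one triangle through a side $s$, crosses the shared edge, and leaves the adjacent triangle through the side $s'$ opposite to $s$ in this two-triangle gluing; by definition of $m_1(d)$, the length of such a sub-arc is at least $m_1(d)$. The probability that two adjacent triangles share more than one side (the only obstruction to a two-triangle gluing being a disk) is $\mathcal{O}(N^{-1})$, and on those exceptional surfaces $\sys_N$ is bounded above by $\mathcal{O}(\sqrt{N})$ via Gromov's systolic inequality (Theorem~\ref{thm_gromov}), contributing only $o(1)$ to $\ExV{}{\sys_N}$.

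For the upper bound, the first step is the pathwise inequality $\sys_N(\omega) \le m_2(d)\cdot m_{\ell,N}(\omega)$ on the event $\{m_{\ell,N}>0\}$. Realising a shortest non-contractible circuit on $\Gamma(\omega)$ as a curve on the surface which, in each triangle it visits, travels from the midpoint of the entry side to the midpoint of the exit side, each inter-midpoint segment has length at most $m_2(d)$ by definition, and concatenation yields a representative of a non-trivial homotopy class of total length at most $m_2(d)\cdot m_{\ell,N}$. The second step is to use Proposition~\ref{prp_badsurfEucl} to discard bad surfaces, then reduce to computing $\lim_{N\to\infty}\ExV{}{m_{\ell,N}} = \sum_{k=2}^\infty k\,\Pro{}{m_\ell=k}$. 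The pointwise limits $\Pro{}{m_{\ell,N}=k}\to \Pro{}{m_\ell=k}$ come from Proposition~\ref{prp_mell}, and the limit-sum exchange is justified by dominated convergence, using the uniform-in-$N$ bound
\[
\Pro{}{m_{\ell,N}\ge k} \;\le\; \Pro{}{X_{N,k-1}=0} \;+\; \Pro{}{\text{every $(k-1)$-circuit carries $L^{k-1}$ or $R^{k-1}$}} \;+\; \Pro{}{G_{N,k-1}^o},
\]
all three summands decaying exponentially in $k$: the first by Proposition~\ref{prp_ubPX1}, the second by the orientation-counting argument from the end of the proof of Theorem~A.1, and the third by Theorem~D.

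The main obstacle is securing this uniform summable domination. The limiting random variable $m_\ell$ has super-exponentially decaying tails, but for finite $N$ we must rely on the merely exponential decay $k^8(3/8)^k$ supplied by Proposition~\ref{prp_ubPX1}; fortunately this already makes $k\,\Pro{}{m_{\ell,N}=k}$ summably dominated uniformly in $N$, which is exactly what the dominated convergence argument requires. A secondary technical point is ruling out the degenerate local two-triangle configurations in the lower bound, handled by the Gromov-plus-$\mathcal{O}(N^{-1})$ estimate described above.
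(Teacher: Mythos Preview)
Your proposal follows the paper's approach and is essentially correct, but there are two places where your write-up is off.

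\textbf{Lower bound.} Your worry about two adjacent triangles sharing more than one side is unnecessary, and your stated reason for the key sub-arc (``$\gamma$ cannot be contained in such a neighborhood'') is not the right one. The paper's argument is purely about the path, not about the local embedding: since $\gamma$ is not homotopic to a corner it cannot turn in the same direction on every triangle, so somewhere it makes an $LR$ (or $RL$) step; at that step the sub-arc of $\gamma$ running through those two consecutive triangles goes from one side of the abstract quadrilateral to the \emph{opposite} side, and its length---being a path integral---is the same as in the abstract two-triangle gluing, hence $\geq m_1(d)$. Further identifications between the two triangles cannot shorten a fixed path. Thus one gets $\sys_N(\omega)\geq m_1(d)$ deterministically whenever $g_N(\omega)>0$, and $\ExV{}{\sys_N}\geq m_1(d)\bigl(1-\Pro{}{g_N=0}\bigr)\to m_1(d)$. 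No appeal to Gromov's inequality or to $\mathcal{O}(N^{-1})$ estimates is needed here.

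\textbf{Upper bound.} Your domination step is almost identical to the paper's, but your third summand is both superfluous and wrongly justified. Theorem~D does \emph{not} give a uniform-in-$N$ exponential decay of $\Pro{}{G_{N,k-1}^o}$ in $k$; its proof yields $\Pro{}{G_{N,k}^o}\leq Rk^2 2^k/N$, which grows in $k$ for fixed $N$. The point is that after you have discarded $B_N$ (via Proposition~\ref{prp_badsurfEucl}) this term vanishes: on $\Omega_N\setminus B_N$ there are no separating circuits of length $\leq C\log_2 N$, and $m_{\ell,N}\leq C\log_2 N$ anyway, so for the only values of $k$ that occur the third event is empty. The paper accordingly uses just the two-term bound
\[
\Pro{}{\omega\in\Omega_N\setminus B_N,\; m_{\ell,N}(\omega)=k}\cdot k \;\leq\; D(k-1)^8\Bigl(\tfrac{3}{8}\Bigr)^{k-1}k \;+\; \tfrac{2}{2^{k-1}}\,k,
\]
coming from Proposition~\ref{prp_ubPX1} and the orientation count, exactly as you describe for the first two terms.
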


\begin{proof} For the lower bound we reason as follows: if the random surface has genus greater than $0$ then it has a homotopically non-trivial closed curve and hence a non zero systole. The systole cannot be contained in a triangle and neither can it turn around a shared corner of some number of triangles. This means that it has to contain a segment that crosses two triangles through the opposite sides as in the picture below:
\begin{figure}[H] 
\begin{center} 
\includegraphics[scale=0.5]{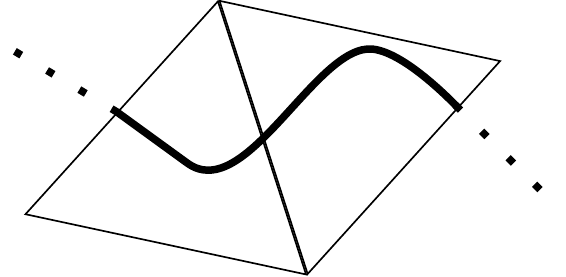} 
\caption{A segment that crosses two triangles through the opposite sides}
\label{pic17}
\end{center}
\end{figure}
\noindent This implies that:
\begin{equation*}
m_1(d)(1-\Pro{}{g_N=0}) \leq \ExV{}{\sys_N}
\end{equation*}
So:
\begin{align*}
\liminf\limits_{N\rightarrow\infty} \ExV{}{\sys_N} & \geq \liminf\limits_{N\rightarrow\infty} m_1(d)(1-\Pro{}{g_N=0})  \\
   & = m_1(d)
\end{align*}

For the upper bound proposition \ref{prp_badsurfEucl} tells us that: 
\begin{equation*}
\limsup_{N\rightarrow\infty}\ExV{}{\sys_N} = \limsup\limits_{N\rightarrow\infty} \frac{1}{\aant{\Omega_N}}\sum_{\omega\in\Omega_N-B_N}\sys_N(\omega)
\end{equation*}
We want to use our results on $m_\ell$, so we split the sum on the right hand side up over $m_\ell$ and we get:
\begin{equation*}
\limsup_{N\rightarrow\infty}\ExV{}{\sys_N} = \limsup\limits_{N\rightarrow\infty} \frac{1}{\aant{\Omega_N}} \sum\limits_{k=2}^\infty\sum_{\substack{\omega\in\Omega_N-B_N,\\ m_{\ell,N}(\omega)=k}} \sys_N(\omega)
\end{equation*}
Given $\omega\in\Omega_N$ with $m_{\ell,N}(\omega)=k$ we know that $\sys_N(\omega) \leq m_2(d)k$, so:
\begin{align*}
\limsup_{N\rightarrow\infty}\ExV{}{\sys_N} & \leq \limsup\limits_{N\rightarrow\infty} \frac{1}{\aant{\Omega_N}} \sum\limits_{k=2}^\infty\sum_{\substack{\omega\in\Omega_N-B_N,\\ m_{\ell,N}(\omega)=k}} m_2(d) k \\
   & = \limsup\limits_{N\rightarrow\infty}m_2(d)\sum\limits_{k=2}^\infty \Pro{}{\substack{\omega\in\Omega_N-B_N,\\ m_{\ell,N}(\omega)=k}}k
\end{align*}
The limit on the right hand side we can compute. We will show that $\Pro{}{\substack{\omega\in\Omega_N-B_N,\\ m_{\ell,N}(\omega)=k}}k$ is universally bounded by a summable function of $k$. This implies we can apply the dominated convergence theorem in combination with the pointwise limits that we already know from proposition \ref{prp_mell}.

To get an upper bound on $\Pro{}{\substack{\omega\in\Omega_N-B_N,\\ m_{\ell,N}(\omega)=k}}k$ we reason as follows: if $\omega\notin B_N$ and $m_{\ell,N}(\omega)=k$ then there are either no circuits of $k-1$ edges on $\omega$ or there are some of these circuits of $k-1$ that all carry a word of the type $L^{k-1}$ (again the third option would be that there is some circuit of $k-1$ edges that cuts off a disk and hence is separating, but because $\omega\notin B_N$ this is impossible). So we get:
\begin{align*}
\Pro{}{\substack{\omega\in\Omega_N-B_N,\\ m_{\ell,N}(\omega)=k}}k & \leq \Pro{}{X_{N,k-1}=0}k+\Pro{}{\substack{X_{N,k-1}>0\text{ and all }k-1\text{-circuits} \\ \text{carry a word of the type }L^{k-1}}}k \\
   & \leq D(k-1)^8 \left(\frac{3}{8}\right)^{k-1}k + \frac{2}{2^{k-1}}k
\end{align*}
for some $D\in(0,\infty)$, where we have used proposition \ref{prp_ubPX1} for the first term. This is a summable function that is independent of $N$. So the dominated convergence theorem implies that we can fill in the pointwise limits of the terms, which completes the proof. 
\end{proof}

The expression on the right hand side of Theorem C is something we can compute up to a finite number of digits. We have:
\begin{equation}\label{eq_numRiem}
\limsup_{N\rightarrow\infty}\ExV{}{\sys} \leq 2.87038 \cdot m_2(d)
\end{equation}

We already noted that in the equilateral Euclidean case we have $m_1(d)=1$ $m_2(d)=\frac{1}{2}$, so we get:
\begin{equation}\label{eq_numEucl1}
1\leq \liminf_{N\rightarrow\infty}\ExV{}{\sys_N} 
\end{equation}
and:
\begin{equation}\label{eq_numEucl2}
\limsup_{N\rightarrow\infty}\ExV{}{\sys_N} \leq 1.43519
\end{equation}
It is not difficult to see that this last inequality is not optimal. We can however construct a sequence of metrics that does come close to the upper bound in Theorem C, which we will do in the next section.

%%%%%%%%%%%%%%%%%%%%%%%%%%%%%%%%%%%%%%%%%%%%%%%%%
%		subsubsection: Sharpness of the upper bound
%%%%%%%%%%%%%%%%%%%%%%%%%%%%%%%%%%%%%%%%%%%%%%%%%
\subsection{Sharpness of the upper bound}\label{sec_Sharpness}

The goal of this section is to show that Theorem C is sharp. We have the following proposition:

\begin{prp}
For every $\varepsilon > 0$ and every $M\in (0,\infty)$ there exists a Riemannian metric \linebreak $d:\Delta\times\Delta\rightarrow [0,\infty)$ such that:
$$
m_2(d)\sum_{k=2}^\infty k\left(e^{-\sum\limits_{j=1}^{k-1}\frac{2^{j-1}-1}{j}} -e^{-\sum\limits_{j=1}^k\frac{2^{j-1}-1}{j}}\right)-\varepsilon\leq \liminf_{N\rightarrow\infty}\ExV{}{\sys_N}
$$
and:
$$
m_2(d) = M
$$
\end{prp}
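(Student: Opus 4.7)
The plan is to construct, for each large integer $n$, a Riemannian metric $d_n$ on $\Delta$ that is symmetric in the sides, satisfies $m_2(d_n)=M$ exactly, and concentrates the ``inexpensive'' room for curves into three narrow straight channels, one joining each pair of midpoints of sides of $\Delta$. Outside these channels the metric will be so expensive that any curve crossing the complementary region incurs huge length. Short non-trivial closed curves on the resulting random surface will then be forced to stay inside the channels, passing through the midpoint-neighborhoods in each triangle they enter, and hence contributing approximately $M$ of length per triangle traversed. Together with Proposition \ref{prp_circuits} this will yield $\sys_N(\omega) \geq (M-\delta_n)\, m_{\ell,N}(\omega)$ on typical $\omega$, and the asymptotic distribution of $m_{\ell,N}$ from Proposition \ref{prp_mell} will then deliver the sharp lower bound.

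Concretely, I would take three open strips $C^{(n)}_{ij} \subset \Delta$ of width $1/n$ around the straight segments between the midpoints of the sides, and on their union $H_n$ place a nearly flat metric, tweaked so that all three midpoint-to-midpoint distances are exactly $M$ and the $\mathcal{S}_3$-symmetry is preserved. On the complement $U_n = \Delta \setminus H_n$ I would use a large conformal factor $\rho_n$, supported away from $\partial\Delta$, chosen so that the $d_n$-length of any path entering $U_n$ is at least $n$. A smooth bump-function interpolation, with $\rho_n \equiv 1$ in a collar of $\partial\Delta$, ensures the metric is smooth and Riemannian with vanishing normal derivatives at the boundary, and the $\mathcal{S}_3$-invariance of the construction gives symmetry in the sides. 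One then checks $m_2(d_n)=M$ by using the straight channel geodesics between midpoints.

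For the analysis I would prove the geometric claim that for $n$ large and any non-trivial closed curve $\gamma$ on the surface built from $2N$ copies of $(\Delta, d_n)$ with $\ell(\gamma) < n/2$, the curve never enters $U_n$ and therefore crosses each triangle it visits from one midpoint-neighborhood to another at length at least $M - O(1/n)$, so $\ell(\gamma) \geq (M-\delta_n)\, k(\gamma)$, where $k(\gamma)$ counts the triangles traversed and $\delta_n \to 0$. By Proposition \ref{prp_circuits} the corresponding walk on the dual graph contains a non-trivial circuit, so $k(\gamma) \geq m_{\ell,N}(\omega)$, and hence $\sys_N(\omega) \geq (M-\delta_n)\, m_{\ell,N}(\omega)$ whenever $\sys_N(\omega)<n/2$. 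Exceptional surfaces (those in $B_N$, or those with $\sys_N \geq n/2$) contribute negligibly as $N\to\infty$ by the argument of Proposition \ref{prp_badsurfEucl} together with Gromov's inequality. Fatou's lemma applied to the distributional limit in Proposition \ref{prp_mell} then gives
\begin{equation*}
\liminf_{N\to\infty} \ExV{}{\sys_N} \geq (M-\delta_n)\sum_{k=2}^\infty k\left(e^{-\sum_{j=1}^{k-1}\frac{2^{j-1}-1}{j}}-e^{-\sum_{j=1}^{k}\frac{2^{j-1}-1}{j}}\right),
\end{equation*}
and choosing $n$ large enough that the $\delta_n$-error is less than $\varepsilon$ concludes the argument.

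The main obstacle will be the geometric claim: it is not immediate that a curve confined to $H_n$ cannot take a short shortcut near the barycenter of $\Delta$, where all three channels meet. I would remedy this by placing a smaller but positive $\rho_n$-bump near the barycenter so that any curve entering a triangle through one midpoint and exiting through either of the other two must follow a full channel of length $M - O(1/n)$ and cannot cut across the centre. A parallel technical nuisance is arranging smoothness, side-symmetry, and the vanishing normal derivatives of $g$ at $\partial\Delta$ all at once, which I would handle by the standard device of smoothing $\rho_n$ with bump functions supported away from the boundary.
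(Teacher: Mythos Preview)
Your overall strategy---build walls that force short non-trivial curves into narrow channels joining the midpoints of sides, then read off the length from $m_{\ell,N}$---is exactly the paper's. The gap is in the construction itself. You require $\rho_n \equiv 1$ in a collar of $\partial\Delta$, but then this collar is cheap and lies in $U_n$; since the channels touch $\partial\Delta$ at the midpoints, a curve can slip from a channel into the collar at no cost and then run along the collar around a corner of $\Delta$ without ever paying the $\rho_n$-penalty. So your claim that a short curve ``never enters $U_n$'' is false, and the key inequality $\sys_N(\omega) \geq (M - \delta_n)\, m_{\ell,N}(\omega)$ breaks down. In particular, if the channel metric is tuned so that the in-channel midpoint-to-midpoint distance is $M$ while the collar stays Euclidean, then for large $M$ the collar route between midpoints is shorter, so even $m_2(d_n)=M$ fails. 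The paper avoids exactly this leak: it only asks that the \emph{normal derivatives} of $\rho$ vanish on $\partial\Delta$, not that $\rho\equiv 1$ there. Its walls (along the lines $\langle x,e_i\rangle = (1\pm\delta)/2$) reach the boundary with full height, so moving along a side of $\Delta$ away from the midpoint already forces a wall crossing of cost $\geq D$. Arbitrary $m_2(d)=M$ is then obtained by one global scaling at the end, not by adjusting channel lengths separately from the rest of the metric.

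A smaller point: your worry about a shortcut ``near the barycenter, where all three channels meet'' rests on a geometric slip. The segments joining pairs of side-midpoints are the edges of the medial triangle; they meet pairwise at the three midpoints of the sides, not at the barycenter. The only genuine corner-cutting happens near those midpoints and, once the boundary leak above is closed, costs only $O(\delta)$ per triangle, exactly as in the paper.
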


\begin{proof} The idea of the proof is simply to construct the metric $d$. Recall that an element $x\in\Delta$ can be expressed as $x=t_1e_1+t_2e_2+t_3e_3$ with $t_1,t_2,t_3\in[0,1]$ and $t_1+t_2+t_3=1$. We can write $t_i=\inp{x}{e_i}$ for $i=1,2,3$, where $\inp{\cdot}{\cdot}$ denotes the inner product in $\mathbb{R}^3$.

We define the following two subsets of $\Delta$:
\begin{equation*}
\mathcal{P}_1=\verz{x\in\Delta}{\exists i\in\{1,2,3\}\text{ such that }\inp{x}{e_i}=\frac{1}{2}}
\end{equation*}
and for a given $\delta>0$:
\begin{equation*}
\mathcal{P}_2=\verz{x\in\Delta}{\exists i\in\{1,2,3\}\text{ such that }\inp{x}{e_i}=\frac{1\pm\delta}{2}}
\end{equation*}
as depicted in figure \ref{pic19} below:
\begin{figure}[H] 
\begin{center} 
\includegraphics[scale=1.2]{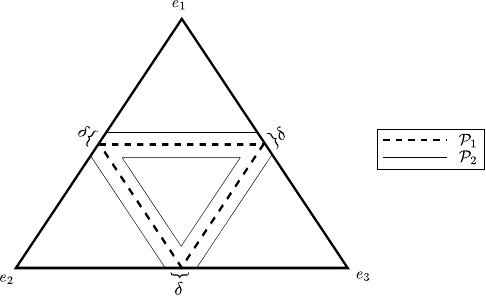} 
\caption{$\Delta$, $\mathcal{P}_1$ and $\mathcal{P}_2$}
\label{pic19}
\end{center}
\end{figure}
Furthermore, given $D\in (1,\infty)$, we construct a function $\rho_{D,\delta}:\Delta\rightarrow [1,D]$ that satisfies the following properties:
\begin{itemize}
\item[I.] $(\rho_{D,\delta})\big|_{\mathcal{P}_1} = 1$.
\item[II-a.] $\rho_{D,\delta}(x)=1$ when $\inp{x}{e_i}\leq \frac{1}{2}-\delta$ for all $i\in\{1,2,3\}$.
\item[II-b.] $\rho_{D,\delta}(x)=1$ when there exists an $i\in\{1,2,3\}$ such that $\inp{x}{e_i}\geq \frac{1}{2}+\delta$.
\item[III-a.] $\int_\gamma\rho_{D,\delta}\geq D$ when $\gamma:[0,1]\rightarrow \Delta$ is a curve such that there exists an $i\in\{1,2,3\}$ and $s_1,s_2\in[0,1]$ such that $\inp{\gamma(s_1)}{e_i} \leq \frac{1}{2}-\delta$ and $\inp{\gamma(s_2)}{e_i} \geq \frac{1}{2}$.
\item[III-b.] $\int_\gamma\rho_{D,\delta}\geq D$ when $\gamma:[0,1]\rightarrow \Delta$ is a curve such that there exists an $i\in\{1,2,3\}$ and $s_1,s_2\in[0,1]$ such that $\inp{\gamma(s_1)}{e_i} \leq \frac{1}{2}$ and $\inp{\gamma(s_2)}{e_i} \geq \frac{1}{2}+\delta$.
\item[IV.] $\rho_{D,\delta}\in C^\infty(\Delta)$ and $\frac{\partial^k}{\partial n^k}\big|_x \rho_{D,\delta} = 0$ for all $k\geq 1$, all $x\in\partial\Delta$ and all $n$ normal to $\partial\Delta$ at $x$.
\end{itemize}
Property II-a says that $\rho_{D,\delta}$ has to be equal to $1$ far enough from $\mathcal{P}_2$ inside the middle triangle in $\Delta$ and property II-b says the same about the triangles in the corners. Property III-a means that when a curve goes between the middle triangle and $\mathcal{P}_1$ then the area lying under the function $\rho_{D,\delta}$ on this curve is at least $D$. Property III-b states the equivalent for the triangles in the corner.

A candidate for such a function is the function $\tilde{\rho}_{D,\delta}:\Delta\rightarrow [1,\infty)$ given by:
\begin{equation*}
\tilde{\rho}_{D,\delta} (x) = \min\left\{D\left(1-\frac{4}{\delta}d_{\text{Eucl}}\left(x,\mathcal{P}_2\right)\right) ,1\right\}
\end{equation*}
where $d_{\text{Eucl}}$ denotes the standard Euclidean metric on $\Delta$. It is easy to see that $\tilde{\rho}_{D,\delta}$ satisfies all the properties except property IV. So if we let $\rho_{D,\delta}$ be a smoothing of $\tilde{\rho}_{D,\delta}$ we get the type of function we are looking for. Figure \ref{pic21} below shows a cross section of $\rho_{D,\delta}$ around $(t_1,t_2,t_3)=(\frac{1}{2},\frac{1}{4},\frac{1}{4})$:
\begin{figure}[H] 
\begin{center} 
\includegraphics[scale=1]{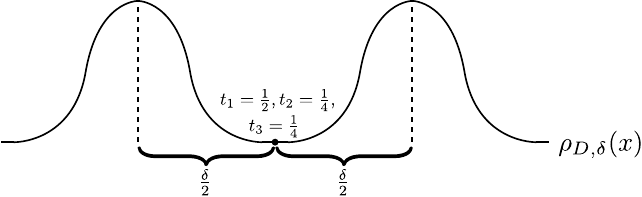} 
\caption{A cross section of $\rho_{D,\delta}$}
\label{pic21}
\end{center}
\end{figure}

$\rho_{D,\delta}$ gives us a Riemannian metric $g_{D,\delta}:T\Delta\times T\Delta\rightarrow\mathbb{R}$ by:
\begin{equation*}
g_{D,\delta} = \rho_{D,\delta} g_{\text{Eucl}}
\end{equation*}
Where $g_{\text{Eucl}}$ denotes the standard Riemannian metric on $\Delta$ that induces $d_{\text{Eucl}}$. $g_{D,\delta}$ induces a metric $d_{D,\delta}:\Delta\times\Delta\rightarrow (0,\infty)$, given by:
\begin{equation*}
d_{D,\delta}(x,y) = \inf\verz{\int_\gamma \rho_{D,\delta}}{\gamma:[0,1]\rightarrow\Delta\text{ continuous, }\gamma(0)=x,\;\gamma(1)=y}
\end{equation*}

Note that:
\begin{equation*}
m_2(d_{D,\delta})=\frac{1}{\sqrt{2}}
\end{equation*}

The idea behind the metric $d_{D,\delta}$ is that when $D$ grows it is very `expensive' to go from one of the smaller triangles to another. So the shortest paths between the different sides of the triangle avoid the region around $\mathcal{P}_2$.

Now suppose that $\gamma$ is a closed curve on a random surface, not homotopic to a corner. Then somewhere $\gamma$ must cross a quadrilatereal (i.e. two triangles glued together) through both the opposite  sides (the curve cannot turn in the same direction on every triangle, because then it would be homotopic to a corner) as in figure \ref{pic22} below:
\begin{figure}[H] 
\begin{center} 
\includegraphics[scale=1]{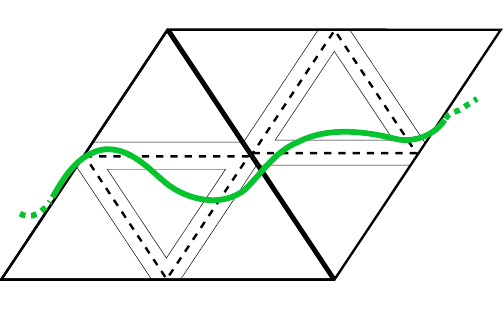} 
\caption{Two consecutive triangles that are crossed by a path.}
\label{pic22}
\end{center}
\end{figure}

It is clear that if $D$ is chosen large enough then the `cheapest' way to do this is without crossing the thin lines in the picture and staying on the dotted line. 

It is not difficult to see that the systole of a random surface equipped with the metric coming from $d_{D,\delta}$ has to be homotopic to a circuit. So if $\omega\in\Omega_N$ and $m_\ell(\omega)\leq D/m_2(d_{D,\delta})$ then the systole stays between the thin lines (so at Euclidean distance at most $\frac{\delta}{2}$ from the dotted lines). 

Because $\rho_{D,\delta}(x)\geq 1$ for all $x\in\Delta$ we have:
\begin{equation*}
\ell_{\text{Eucl}}(\gamma) \leq \ell_{D,\delta}(\gamma) 
\end{equation*}
for every curve $\gamma$ in every random surface, where $\ell_{D,\delta}$ denotes the length with respect to the metric $d_{D,\delta}$ and $\ell_{\text{Eucl}}$ denotes the length with respect to the Euclidean metric. Also note that for a curve traveling over the dotted lines we have equality.

Suppose we are given $\omega\in\Omega_N$ with $m_\ell(\omega)\leq D/m_2(d_{D,\delta})$ and we look at the systole. On every triangle the systole passes, it must travel at least the (Euclidean) length of the middle subtriangle between the dotted lines (before and after this middle subtriangle there is some possibility to `cut the corners'). This length is $\frac{1}{\sqrt{2}}-\sqrt{2}\delta=m_2(d_{D,\delta})-\sqrt{2}\delta$. So we get:
\begin{equation*}
\ExV{}{\sys_N} \geq \sum\limits_{2\leq k\leq D/m_2(d_{D,\delta})} \Pro{}{m_{\ell,N}=k}(m_2(d_{D,\delta})-\sqrt{2}\delta)k
\end{equation*}
Because the expression on the right is a finite sum we get:
\begin{align*}
\liminf\limits_{N\rightarrow\infty}\ExV{}{\sys_N} & \geq \liminf\limits_{N\rightarrow\infty}\sum\limits_{2\leq k\leq D/m_2(d_{D,\delta})} \Pro{}{m_{\ell,N}=k}(m_2(d_{D,\delta})-\sqrt{2}\delta)k \\
   & \geq (m_2(d_{D,\delta})-\sqrt{2}\delta) \sum\limits_{2\leq k\leq D/m_2(d_{D,\delta})}k\left(e^{-\sum\limits_{j=1}^{k-1}\frac{2^{j-1}-1}{j}} -e^{-\sum\limits_{j=1}^k\frac{2^{j-1}-1}{j}}\right) 
\end{align*}
We know that the sum on the right converges for $D\rightarrow\infty$, so by increasing $D$ it gets arbitrarily close to its limit. Furthermore we have chosen $\delta$ and $D$ such that they do not depend on each other, which means that we can make $\delta$ arbitrarily small. So this proves the proposition for $m_2(d)=\frac{1}{\sqrt{2}}$. To get the result for any other prescribed $m_2$ we can put a constant factor in front of $\rho_{D,\delta}$.
\end{proof}

%%%%%%%%%%%%%%%%%%%%%%%%%%%%%%%%%%%%%%%%%%%%%%%%%
%		R E F E R E N C E S
%%%%%%%%%%%%%%%%%%%%%%%%%%%%%%%%%%%%%%%%%%%%%%%%%

\nocite{*}

\end{document}